\documentclass{amsart}
\usepackage{amsmath,amsthm,amsfonts,amssymb,latexsym,mathrsfs,graphicx}
\usepackage{pgf,tikz}
\usepackage{ulem}
\usetikzlibrary{arrows}

\usepackage{hyperref}

\usepackage{enumerate}
\usepackage[shortlabels]{enumitem}
\usepackage{color}

\headheight=7pt
\textheight=574pt
\textwidth=432pt
\topmargin=14pt
\oddsidemargin=18pt
\evensidemargin=18pt
\usepackage{pstricks-add}
\usepackage{graphicx}
\textwidth160mm
\oddsidemargin5mm
\evensidemargin5mm
\usepackage{pgf,tikz,pgfplots}
\usepackage{mathrsfs}
\usetikzlibrary{arrows}
\definecolor{ududff}{rgb}{0.30196078431372547,0.30196078431372547,1}

\headheight=5pt \textheight=600pt \textwidth=450pt \topmargin=14pt 
\oddsidemargin=11pt\evensidemargin=14pt

\newtheorem{theorem}{Theorem}[section]
\newtheorem{corollary}[theorem]{Corollary}
\newtheorem{lemma}[theorem]{Lemma}
\newtheorem{proposition}[theorem]{Proposition}
\newtheorem{thm}{Theorem}

\newtheorem{prop}[thm]{Proposition}

\theoremstyle{definition}

\newtheorem{example}[theorem]{Example}

\def\cent#1#2{{\bf C}_{#1}(#2)}

\def\syl#1#2{{\rm Syl}_#1(#2)}

\def\oh#1#2{{\bf O}_{#1}(#2)}
\def\zent#1{{\bf Z}(#1)}

\def\cent#1#2{{\bf C}_{#1}(#2)}

\def\syl#1#2{{\rm Syl}_#1(#2)}

\def\norm#1#2{{\bf N}_{#1}(#2)}
\def\oh#1#2{{\bf O}_{#1}(#2)}

\def\zent#1{{\bf Z}(#1)}

\def\cent#1#2{{\bf C}_{#1}(#2)}
\def\syl#1#2{{\rm Syl}_#1(#2)}
\def\oh#1#2{{\bf O}_{#1}(#2)}

\def\zent#1{{\bf Z}(#1)}

\def\norm#1#2{{\bf N}_{#1}(#2)}

\mathchardef\coso="2023

\begin{document}

\title{On non self-normalizing subgroups}
\author{\centerline{
    Mariagrazia Bianchi, Rachel D. Camina, 
    Mark L. Lewis, Emanuele Pacifici and Lucia Sanus
}}

\address{M. Bianchi, Dipartimento di Matematica F. Enriques, Universit\`a degli Studi di Milano, via Saldini 50, 20133 Milano, Italy}
\email{mariagrazia.bianchi@unimi.it}

\address{R. D. Camina, Fitzwilliam College, Cambridge, CB3 0DG, United Kingdom}
\email{rdc26@cam.ac.uk}

\address{M. L. Lewis, Department of Mathematical Sciences, Kent State University, Kent, OH 44242, USA}
\email{lewis@math.kent.edu}

\address{E. Pacifici, Dipartimento di Matematica e Informatica U. Dini, Universit\`a degli Studi di Firenze, Viale Morgagni 67/A, 50134 Firenze, Italy.}
\email{emanuele.pacifici@unifi.it}

\address{L. Sanus, Departament de Matem\`atiques, 
Universitat de Val\`encia,
46100 Burjassot, Val\`encia, Spain. }
\email{lucia.sanus@uv.es}
\renewcommand{\shortauthors}{Bianchi et al.}


\begin{abstract}
Let $n$ be a non negative integer, and define ${\mathcal D}_n$ to be the family of all finite groups having precisely $n$ conjugacy classes of nontrivial subgroups that are not self-normalizing.  We are interested in studying the behavior of ${\mathcal D}_n$ and its interplay with solvability and nilpotency.  We first show that if $G$ belongs to ${\mathcal D}_n$ with $n \le 3$, then $G$ is solvable of derived length at most $2$.  We also show that ${\rm A}_5$ is the unique nonsolvable group in ${\mathcal D}_4$, and that ${\rm SL}_2(3)$ is the unique solvable group in ${\mathcal D}_4$ whose derived length is larger than $2$.  For a group $G$, we define ${\mathcal D}(G)$ to be the number of conjugacy classes of nontrivial subgroups that are not self-normalizing.  We determine the relationship between ${\mathcal D}(H \times K)$ and ${\mathcal D} (H)$ and ${\mathcal D}(K)$.  We show that if $G$ is nilpotent and lies in ${\mathcal D}_n$, then $G$ has nilpotency class at most $n/2$ and its derived length is at most $\log_2 (n/2) + 1$.  We consider ${\mathcal D}_n$ for several classes of Frobenius groups, and we use this classification to classify the groups in ${\mathcal D}_0$, ${\mathcal D}_1$, ${\mathcal D}_2$, and ${\mathcal D}_3$.  Finally, we show that if $G$ is solvable and lies in ${\mathcal D}_n$ with $n \ge 3$, then $G$ has derived length at most the minimum of $n-1$ and $3 \log_2 (n+1) + 9$. 
\end{abstract}
\thanks{ The first and fourth authors are partially supported by INdAM-GNSAGA and by the European Union-Next Generation EU, Missione 4 Componente 1, CUP B53D23009410006, PRIN 2022 2022PSTWLB - Group Theory and Applications. The research of the fifth  author is supported by Ministerio de Ciencia e Innovaci\'on (PID2022-137612NB-I00 funded by MCIN/AEI/10.13039/501100011033 and `ERDF A way of making Europe') and Generalitat Valenciana CIAICO/2021/163.}

\subjclass[2010]{Primary 20E07, 20E34; Secondary 20D10, 20D15}

\keywords{non self-normalizing, solvable groups, derived length}

\maketitle
\section{Introduction}

In this paper we focus on finite groups.  We take $n$ to be a non negative integer, and we define ${\mathcal D}_n$ to be the family of all finite groups having precisely $n$ conjugacy classes of nontrivial subgroups that are $\mathit{not}$ self-normalizing.  We are particularly interested in studying the behavior of ${\mathcal D}_n$ and its interplay with solvability and nilpotency.   However, before we begin to look at nilpotency or solvability it makes sense to classify ${\mathcal D}_n$ for some small values of $n$.  When $n = 0$, we have:

\begin{prop} \label {main D_0}
A group is in ${\mathcal D}_0$ if and only if it has prime order.
\end{prop}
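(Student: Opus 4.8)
The plan is to prove the two implications separately; the reverse one is immediate, and the forward one rests on a single application of Burnside's normal $p$-complement theorem. For the ``if'' direction: if $G$ has prime order $p$, then the only nontrivial subgroup of $G$ is $G$ itself, which is self-normalizing, so $G$ has no conjugacy class of non self-normalizing nontrivial subgroups, i.e.\ $G\in\mathcal D_0$. (Here and below I tacitly assume the groups considered are nontrivial; the trivial group is a degenerate case on which the statement has to be read with the obvious convention.)

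For the ``only if'' direction, suppose $G\in\mathcal D_0$, so that every nontrivial subgroup of $G$ is self-normalizing. The first step is to show that every Sylow subgroup of $G$ has prime order. I would fix a prime $p$ dividing $|G|$, take $P\in\syl pG$, and, using that $\zent P\neq 1$, pick a subgroup $Z\le\zent P$ with $|Z|=p$. Then $P\le\cent GZ\le\norm GZ$, and since $Z$ is a nontrivial subgroup the hypothesis forces $\norm GZ=Z$; hence $P=Z$ and $|P|=p$. In particular $|G|$ is squarefree, and --- what matters for the next step --- $P$ is abelian with $\cent GP\le\norm GP=P\le\cent GP$, so $\norm GP=\cent GP$.

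The second step is a single invocation of Burnside's normal $p$-complement theorem: from $\norm GP=\cent GP$ it follows that $G$ has a normal $p$-complement, i.e.\ a normal subgroup $K$ with $G=KP$ and $|K|=|G|/p$. Since $K\nor G$ we have $\norm GK=G$; so if $K$ were nontrivial, then $K$ would be a nontrivial (and proper, as $p\mid|G|$) subgroup of $G$ that is not self-normalizing, contradicting $G\in\mathcal D_0$. Therefore $K=1$ and $G=P$ has order $p$, which completes the argument.

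I do not expect any serious obstacle here: the proof is short, and the only points needing attention are the degenerate case of the trivial group and the verification that Burnside's hypothesis $\norm GP=\cent GP$ holds, which is automatic once $|P|=p$ is known. If one preferred to avoid quoting the normal $p$-complement theorem, an alternative is to note first that $G$ must be simple (a nontrivial proper normal subgroup would fail to be self-normalizing) and then that a simple group of squarefree order has prime order; but the route through Burnside's theorem is the most economical.
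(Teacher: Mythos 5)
Your proof is correct, but it follows a genuinely different route from the paper's. Both arguments begin by reducing every Sylow subgroup to prime order (the paper via its earlier remark that a proper subgroup of a Sylow subgroup is never self-normalizing, you via a central subgroup $Z\le\zent P$ of order $p$ forcing $P\le\norm GZ=Z$ --- both fine). The divergence is in how the conclusion is extracted: the paper first observes that $G\in\mathcal D_0$ forces $G$ to be simple, and then, since all Sylow subgroups are cyclic, invokes the $Z$-group structure theorem (its Theorem~\ref{z-group}) to get $G$ solvable, whence a simple solvable group has prime order. You instead fix a single prime $p$, verify Burnside's hypothesis $P\le\zent{\norm GP}$ from $\norm GP=P$ and $|P|=p$, and apply the normal $p$-complement theorem (the paper's Theorem~\ref{Burnside}) to produce a normal complement $K$, which must be trivial because a nontrivial proper normal subgroup cannot be self-normalizing. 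Both tools are quoted in the paper's background section, so neither route imports anything foreign; yours has the mild advantage of needing only one prime rather than the global statement that all Sylow subgroups are cyclic, while the paper's is a one-line appeal to simplicity plus the $Z$-group theorem. The only point to watch (which you flag correctly) is the degenerate trivial group, which both proofs tacitly exclude.
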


When $n = 1$, we see that:

\begin{prop} \label{main D_1} 
The only abelian groups in ${\mathcal D}_1$ are cyclic groups of order $p^2$, where $p$ is a prime number.  Meanwhile, the only nonabelian groups in  ${\mathcal D}_1$ are Frobenius groups of order $pq$,  where $p$ and $q$ are prime numbers. 
\end{prop}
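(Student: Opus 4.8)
The plan is to treat the abelian and nonabelian cases by quite different means; the abelian part is elementary, while the nonabelian part requires a structural reduction.

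\emph{Abelian case.} In a nilpotent group every proper subgroup is properly contained in its normalizer, so for an abelian $G$ the number ${\mathcal D}(G)$ is simply the number of nontrivial proper subgroups of $G$ (each is normal, hence its own conjugacy class). Thus an abelian $G$ lies in ${\mathcal D}_1$ exactly when it has a unique nontrivial proper subgroup, and a short case analysis settles this: if $|G|$ has two distinct prime divisors, or is $p^k$ with $k\ge 3$, or is $C_p\times C_p$, then (by Cauchy's theorem and the structure theorem for finite abelian groups) $G$ has at least two nontrivial proper subgroups, while $|G|$ prime gives none; only $C_{p^2}$ survives.

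\emph{Nonabelian case, reduction to a unique normal subgroup.} Let $G$ be nonabelian with ${\mathcal D}(G)=1$. First, $G$ is not simple. Indeed, if $G$ were nonabelian simple then for every prime $p$ dividing $|G|$ one exhibits a non-self-normalizing subgroup of order $p$: choose $P\in\syl{p}{G}$ and $Z_0\le\zent{P}$ with $|Z_0|=p$, so that $P\le\cent{G}{Z_0}\le\norm{G}{Z_0}$; if $|P|>p$ this already gives $\norm{G}{Z_0}>Z_0$, and if $|P|=p$ then $\norm{G}{P}=P$ is impossible by Burnside's normal $p$-complement theorem, so again $\norm{G}{Z_0}>Z_0$. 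Since $|G|$ has at least two prime divisors and subgroups of distinct prime orders are non-conjugate, this would force ${\mathcal D}(G)\ge 2$. (Alternatively one could invoke the solvability of groups in ${\mathcal D}_1$.) Hence $G$ has a proper nontrivial normal subgroup $N$; being normal, $N$ is non-self-normalizing and forms a conjugacy class by itself, so the single non-self-normalizing class is $\{N\}$. Consequently $N$ is the \emph{unique} proper nontrivial normal subgroup of $G$ (so $N$ is both minimal and maximal normal, and $G/N$ is simple), and every nontrivial subgroup of $G$ other than $N$ and $G$ is self-normalizing.

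\emph{Identifying $G$, and the converse.} If $|N|$ were not prime, then Proposition~\ref{main D_0} applied to $N$ would furnish a nontrivial $H<N$ with $\norm{N}{H}>H$; then $\norm{G}{H}\ge\norm{N}{H}>H$, so $H$ would be a non-self-normalizing nontrivial subgroup distinct from $N$, a contradiction. Hence $|N|=q$ is prime. Likewise, if $G/N$ were nonabelian simple then (its order not being prime) Proposition~\ref{main D_0} would give $H/N\ne 1$ with $\norm{G/N}{H/N}>H/N$, whence $\norm{G}{H}>H$ by the correspondence theorem, with $H\ne N$ and $H\ne G$ — the same contradiction; therefore $G/N\cong C_p$ for a prime $p$, and $|G|=pq$. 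Since $G$ is nonabelian, $p\ne q$ and $G$ is the nonabelian group of order $pq$. Here $\cent{G}{N}$ is normal, contains $N$, and is not all of $G$ (otherwise $N$ would be central and, $G/N$ being cyclic, $G$ abelian), so $\cent{G}{N}=N$ by maximality of $N$; thus a Sylow $p$-subgroup acts faithfully, hence fixed-point-freely, on $N\cong C_q$, and $G$ is a Frobenius group with kernel $C_q$ and complements the Sylow $p$-subgroups. Conversely, a Frobenius group of order $pq$ has as its only nontrivial proper subgroups the normal kernel $C_q$ and the $q$ conjugate complements $\cong C_p$, the latter being self-normalizing (a normal complement would make $G$ abelian), so ${\mathcal D}(G)=1$.

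\emph{Main obstacle.} The delicate point is the elimination of the simple possibilities, both for $G$ and for the quotient $G/N$, where one cannot descend into a proper normal subgroup; the Burnside normal $p$-complement argument, producing non-self-normalizing subgroups of prime order for at least two primes, is what carries this step.
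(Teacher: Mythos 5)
Your proof is correct, but it takes a genuinely different route from the paper's. The paper first establishes Proposition~\ref{metabelian} (every group in $\mathcal{D}_n$ with $n\le 3$ is solvable of derived length at most $2$) and then reads off the classification from the derived series: in the nonabelian case $G'$ is the unique non-self-normalizing class, Lemma~\ref{rem} forces $G/G'\in\mathcal{D}_0$, and the observation that every proper nontrivial subgroup of the abelian normal subgroup $G'$ would be subnormal in $G$ forces $G'\in\mathcal{D}_0$ as well, giving $|G|=pq$. You instead bypass the solvability machinery entirely: you rule out simplicity of $G$ directly by producing, for each prime divisor, a non-self-normalizing subgroup of prime order (central elements of Sylow subgroups, plus Burnside's normal $p$-complement theorem when the Sylow subgroup has prime order), then let $N$ be the resulting unique proper nontrivial normal subgroup and apply the $\mathcal{D}_0$ classification to both $N$ and the simple quotient $G/N$ to get $|N|=q$ and $|G/N|=p$. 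The trade-off is clear: the paper's argument is two lines once Proposition~\ref{metabelian} is available, but that proposition itself costs a page of case analysis; your argument is self-contained modulo Proposition~\ref{D_0} and Burnside, at the price of the extra simplicity-elimination step, and it shows that the $\mathcal{D}_1$ classification does not actually require the full strength of the solvability result. Your verification of the Frobenius structure (via $\cent GN=N$ and the fixed-point-free action of a Sylow $p$-subgroup) and of the converse are both sound.
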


In Section \ref{sect:D_3} we will in fact classify the groups in ${\mathcal D}_2$ and ${\mathcal D}_3$, but these are complicated enough that we do not include them in the introduction.  

As is standard, we begin the study of nilpotent groups by looking at $p$-groups.  Note that when $n = 0$ and when $n=1$, we see that the $p$-groups lying in ${\mathcal D}_n$ are precisely the cyclic groups of order $p^{n+1}$.  In fact, it is not difficult to see that the cyclic groups of order $p^{n+1}$ always lie in ${\mathcal D}_n$.  To get a uniform conclusion for this next result, we need to consider the noncyclic $p$-groups, and so, it is enough to assume that $n > 1$.  Thus, for $p$-groups, we have the following:

\begin{thm} \label{main p-group}
Let $G$ be a $p$-group belonging to ${\mathcal D}_n$ with $n > 1$ for some prime $p$.  Then $G$ has nilpotency class at most $n/2$ and derived length at most $\displaystyle {\rm log}_2(n/2) +1$.
\end{thm}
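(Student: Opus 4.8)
The plan is to reduce the theorem to a single statement about the nilpotency class, and then prove that by induction on $|G|$. First I would record two standing reductions. The first is that in a $p$-group every proper subgroup is strictly contained in its normalizer (nilpotent groups satisfy the normalizer condition), so the nontrivial subgroups of $G$ that fail to be self-normalizing are \emph{exactly} the nontrivial proper subgroups; hence $\mathcal{D}(G)=n$ is the number of conjugacy classes of nontrivial proper subgroups of $G$. The second is the classical containment $G^{(i)}\le\gamma_{2^i}(G)$ (an immediate induction from $[\gamma_a(G),\gamma_b(G)]\le\gamma_{a+b}(G)$): if $d$ is the derived length and $c$ the nilpotency class, then $\gamma_{2^{d-1}}(G)\ne 1$ forces $2^{d-1}\le c$, i.e. $d\le\log_2 c+1$. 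Consequently it suffices to prove $c\le n/2$, since then $d\le\log_2(n/2)+1$ follows at once (both bounds also hold trivially in the abelian case $c=1$, using $n\ge 2$).

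To prove $c\le n/2$ I would induct on $|G|$. If $G$ is abelian then $c=1\le n/2$ because $n>1$. If $G$ is cyclic we are in the abelian case; if $G$ is generalized quaternion, $G\cong Q_{2^m}$, I would verify $\mathcal{D}(G)\ge 2m-2=2c$ by a direct count: for $m=3$ one has $\mathcal{D}(Q_8)=4=2c$, and for $m\ge 4$ one exhibits the $m-1$ conjugacy classes of subgroups contained in the cyclic maximal subgroup $\langle r\rangle$, the two (non-cyclic) quaternion maximal subgroups, and, for each $j$ with $2\le j\le m-2$, at least one further class of order $2^{m-j}$ (non-normal when $j\le m-3$), for a total of $(m-1)+2+(m-3)=2m-2$.

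In the remaining case $G$ is neither cyclic nor generalized quaternion, so $G$ has at least two subgroups of order $p$. Choose $N\le Z(G)$ with $|N|=p$, and let $K\ne N$ be another subgroup of order $p$. Since $N\nor G$, the property of containing $N$ is conjugacy-invariant, and partitioning the classes of nontrivial proper subgroups accordingly gives
\[
\mathcal{D}(G)=\mathcal{D}(G/N)+A+1,
\]
where $A$ is the number of classes of nontrivial proper subgroups not containing $N$ and the $+1$ accounts for the class of $N$ itself; here $A\ge 1$ because the class of $K$ is counted by $A$. Hence $\mathcal{D}(G/N)\le n-2$. If $\mathcal{D}(G/N)\le 1$ then by Propositions \ref{main D_0} and \ref{main D_1} the $p$-group $G/N$ is trivial, of prime order, or cyclic of order $p^2$, so $|G|\le p^3$, and these finitely many groups are checked by hand. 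Otherwise the inductive hypothesis applies to $G/N$ and gives $c(G/N)\le\mathcal{D}(G/N)/2\le n/2-1$; since $\gamma_{c-1}(G)$ has order at least $p^2$ it is not contained in $N$, so $c(G/N)\ge c-1$, and therefore $c\le c(G/N)+1\le n/2$.

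The step I expect to be the main obstacle is the bookkeeping around the single conjugacy class lost in passing to $G/N$: the argument that $A\ge1$ breaks down precisely when $G$ has a unique subgroup of order $p$, i.e. exactly for cyclic and generalized quaternion groups, and the quaternion groups are genuinely extremal here (their slack $\mathcal{D}(Q_{2^m})-2c$ is as small as possible, equalling $0$ for $Q_8$). So the real content is the explicit treatment of $Q_{2^m}$ together with the verification of the small base cases $|G|\in\{p^2,p^3\}$; the rest is the clean induction above.
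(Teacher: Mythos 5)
Your proof is correct and follows essentially the same route as the paper: both arguments induct on $|G|$ by passing to a central quotient $G/N$ with $|N|=p$, observe that this drops the count of conjugacy classes by at least $2$ (using a second subgroup of order $p$, which exists unless $G$ is cyclic or generalized quaternion), and dispose of the generalized quaternion case by an explicit count. The only cosmetic difference is that the paper tracks the slightly stronger invariant $\log_p|G|-1$ (showing $n\ge 2m$ for noncyclic $G$ of order $p^{m+1}$) and counts subgroups of the dihedral quotient $Q_{2^{m}}/Z$, whereas you track the nilpotency class directly and count inside $Q_{2^m}$ itself.
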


Using our result for $p$-groups, we are able to extend our result for nilpotent groups.   For nilpotent groups, we prove:

\begin{thm} \label{main nilpotent}
Let $G$ be a noncyclic nilpotent group with order divisible by exactly $k$ distinct primes.  If $G$ lies in ${\mathcal D}_n$, then $G$ has nilpotency class at most $(n + 2 - 2^k)/2^k$.  In particular, the derived length of $G$ is at most $\displaystyle {\rm log}_2((n + 2 - 2^k)/{2^k}) + 1$.
\end{thm}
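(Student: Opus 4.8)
The plan is to reduce to the $p$-group case (Theorem~\ref{main p-group}) via the Sylow decomposition of a nilpotent group. Write $G = P_1\times\cdots\times P_k$, where $P_i$ is the (nontrivial) Sylow $p_i$-subgroup of $G$ and $p_1,\dots,p_k$ are the distinct prime divisors of $|G|$; since a direct product of cyclic groups of pairwise coprime orders is cyclic, at least one factor is noncyclic. Recall that in a finite nilpotent group every proper subgroup is properly contained in its normalizer, so the unique self-normalizing subgroup is the whole group. Consequently, for any nontrivial nilpotent group $X$ the number ${\mathcal D}(X)+2$ equals the number of conjugacy classes of subgroups of $X$ — we add back the classes of $\{1\}$ and of $X$. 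Put $n_i={\mathcal D}(P_i)$, so that $P_i\in{\mathcal D}_{n_i}$.

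Next I would prove the multiplicativity relation $n+2=\prod_{i=1}^k(n_i+2)$. Because the $P_i$ have pairwise coprime orders, every subgroup $L\le G$ decomposes as $L=(L\cap P_1)\times\cdots\times(L\cap P_k)$, and for $g=(g_1,\dots,g_k)\in G$ one has $L^g=(L\cap P_1)^{g_1}\times\cdots\times(L\cap P_k)^{g_k}$. Hence conjugacy classes of subgroups of $G$ are in bijection with $k$-tuples of conjugacy classes of subgroups of the $P_i$, and the count is the product; in view of the previous paragraph this reads $n+2=\prod_{i=1}^k(n_i+2)$. (This is the coprime instance of the general direct-product behaviour of ${\mathcal D}$.)

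Now I would invoke Theorem~\ref{main p-group}. Every noncyclic $p$-group $P$ has $P/\frat{P}$ of rank at least $2$, hence at least $p+1\ge 3$ maximal subgroups; these are normal, nontrivial and pairwise nonconjugate, so ${\mathcal D}(P)\ge 3>1$. Thus Theorem~\ref{main p-group} applies to each noncyclic factor $P_i$, giving nilpotency class at most $n_i/2$ and derived length at most ${\rm log}_2(n_i/2)+1$; the cyclic factors contribute nilpotency class and derived length equal to $1$. Since both the nilpotency class and the derived length of $G$ are the maxima over $i$ of the corresponding invariants of the $P_i$, it remains to show that $n_j/2\le(n+2-2^k)/2^k$ for every noncyclic index $j$ and that the cyclic value $1$ does not exceed the asserted bounds.

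This last point is an elementary computation. For a noncyclic index $j$ we have $n+2=(n_j+2)\prod_{i\ne j}(n_i+2)\ge(n_j+2)2^{k-1}$, because $n_i+2\ge 2$ for all $i$; rearranging gives $n_j/2\le(n+2)/2^k-1=(n+2-2^k)/2^k$. Since moreover $n_j+2\ge 5$ for such $j$, the same estimate gives $n+2\ge 5\cdot 2^{k-1}>2^{k+1}$, whence $(n+2-2^k)/2^k>1$; this covers the cyclic contribution and ensures the logarithm below is positive. Therefore the nilpotency class of $G$ is at most $(n+2-2^k)/2^k$, and applying the increasing function ${\rm log}_2$ yields $\dl{G}\le{\rm log}_2((n+2-2^k)/2^k)+1$. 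The only delicate points are the rigorous verification of the multiplicativity $n+2=\prod(n_i+2)$ and the confirmation that the hypothesis $n_i>1$ of Theorem~\ref{main p-group} really is met by every noncyclic Sylow factor; neither is hard, but both require attention.
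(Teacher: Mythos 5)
Your proposal is correct and follows essentially the same route as the paper: decompose $G$ into its Sylow subgroups, use the multiplicativity of $\mathcal{D}(\cdot)+2$ over coprime nilpotent direct factors (the paper's Lemma~\ref{product}) to bound each $\mathcal{D}(P_i)$ by $(n+2-2^k)/2^{k-1}$, and then apply Theorem~\ref{main p-group} factor by factor. The only cosmetic differences are that you re-derive the product formula in the coprime case and verify directly that every noncyclic Sylow factor has $\mathcal{D}(P_i)>1$, whereas the paper simply treats the case $\mathcal{D}(P_j)\le 1$ separately and notes its class is $1$.
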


We now turn our attention to solvability.  We begin by showing the solvability of the groups in ${\mathcal D}_n$ when $n$ is ``small''.  We also compute the derived lengths of the groups when $n$ is ``small''.
	
\begin{thm} \label{main solvable le 3}
Let $G$ be a group belonging to $D_n$ for $n \leq 3$.  Then $G$ is solvable with derived length at most $2$.
\end{thm}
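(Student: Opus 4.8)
The plan is to establish solvability first and then bound the derived length by induction on $|G|$, using throughout the monotonicity principle that ${\mathcal D}(G/N)\le {\mathcal D}(G)$ for every $N\nor G$: the full preimage in $G$ of a non-self-normalizing nontrivial proper subgroup of $G/N$ is again such a subgroup of $G$, and this induces an injection on conjugacy classes. Suppose then that $G$ is nonsolvable, and pass to $\overline G=G/R$ with $R$ the solvable radical; then $\overline G\ne 1$ has trivial solvable radical, so a minimal normal subgroup of $\overline G$ is a direct product of nonabelian simple groups and hence $\overline G$ contains a nonabelian simple subgroup $S$. By Burnside's $p^aq^b$-theorem $|S|$ has at least three prime divisors, and for each such prime $p$ a subgroup of order $p$ of $S$ is not self-normalizing in $\overline G$ --- otherwise Burnside's normal $p$-complement theorem would hand $S$ a proper nontrivial normal subgroup --- which already yields three classes, of pairwise distinct prime orders. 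Since a group of squarefree order is solvable, $p^2\mid |S|$ for some $p$; a subgroup $Q\le S$ of order $p^2$ is also non-self-normalizing in $\overline G$ (if $Q$ is not Sylow it lies properly in its normalizer inside a Sylow $p$-subgroup, while if $Q$ is Sylow it is abelian and a self-normalizing $Q$ again produces, via Burnside, a proper nontrivial normal subgroup of $S$). As $Q$ is not conjugate to the previous three subgroups, ${\mathcal D}(G)\ge{\mathcal D}(\overline G)\ge 4$, so any group in ${\mathcal D}_n$ with $n\le 3$ is solvable.

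Now let $G$ be solvable with ${\mathcal D}(G)\le 3$. If $G$ is nilpotent, then $G$ is abelian by the classifications in Propositions~\ref{main D_0} and~\ref{main D_1} when $n\le 1$, and by Theorems~\ref{main p-group} and~\ref{main nilpotent} when $n\in\{2,3\}$. So assume $G$ is not nilpotent and, for contradiction, that $G''\ne 1$. If $G$ had two distinct minimal normal subgroups $M_1,M_2$, then $G/M_i$ would be metabelian by induction on $|G|$, forcing $G''\sbs M_1\cap M_2=1$; hence $G$ has a unique minimal normal subgroup $M$, and $1\ne G''\sbs M$ (again by induction) gives $M=G''$, an elementary abelian $p$-group with $G/M$ metabelian and nonabelian. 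Since $\fit G$ is nilpotent, $[M,\fit G]$ is a normal subgroup of $G$ properly contained in $M$, hence trivial by minimality, so $\fit G\sbs\cent G M$; moreover $\fit G$ is a proper nontrivial normal subgroup, because $G$ is not nilpotent. Finally the proper nontrivial terms of a chief series of $G$ are pairwise non-conjugate non-self-normalizing subgroups, so $G$ has chief length at most $4$, while $G''\ne 1$ forces it to be at least $3$.

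It then remains to produce four conjugacy classes of non-self-normalizing subgroups, contradicting ${\mathcal D}(G)\le 3$. Two facts carry the argument: (i) every proper nontrivial subgroup of the nilpotent group $\fit G$ is non-self-normalizing in $G$ (a proper subgroup of a nilpotent group lies properly in its normalizer); and (ii) for a prime $q\mid |G:\fit G|$, a Sylow $q$-subgroup of $G$ is either non-self-normalizing or, if self-normalizing, forces via Burnside-type normal complement theorems (with a fixed-point-free-action argument when that Sylow subgroup is nonabelian) a normal $q$-complement and severe restrictions on $G'$. One then splits into the case $M\sbs\zent G$ --- the $\SL 3$-type configuration, in which $G'$ is nilpotent of class $2$, so $\fit G$ and its proper subgroups already supply three classes and a Sylow subgroup lying outside $\fit G$ supplies the fourth --- and the case $M\not\sbs\zent G$, in which $G/\cent G M$ acts nontrivially and irreducibly on $M$ and the missing classes come from intermediate normal subgroups or from Sylow subgroups not centralizing $M$. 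In each branch a short analysis according to the chief length and to the isomorphism type of $\fit G$ yields the fourth class. The crux of the proof is exactly this last bookkeeping: since $\SL 3$ and $\sym 4$ both lie in ${\mathcal D}_4$, there is no slack, so every small configuration of $\fit G$ and of the action on $M$ genuinely has to be checked.
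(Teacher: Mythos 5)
Your solvability argument is correct and complete, and it takes a genuinely different route from the paper's. The paper first bounds the Sylow orders and shows at most two primes divide $|G|$, then invokes Burnside's $p^aq^b$-theorem; you instead pass to $G/R$ ($R$ the solvable radical), locate a nonabelian simple subgroup $S$, and exhibit four pairwise non-conjugate non-self-normalizing subgroups of orders $p_1$, $p_2$, $p_3$ and $p_i^2$ inside $S$ (using Burnside's $p^aq^b$-theorem for the three primes, the $Z$-group theorem for the existence of a square divisor, and Burnside's normal $p$-complement theorem to rule out self-normalizing Sylow subgroups of prime or prime-square order in $S$). Together with the monotonicity $\mathcal{D}(G/N)\le\mathcal{D}(G)$ this is a clean contradiction, and it isolates the solvability statement from the arithmetic case analysis the paper performs.

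The derived-length half, however, has a genuine gap. The structural reductions (unique minimal normal subgroup $M=G''$, $\fit G\sbs\cent G M$, chief length $3$ or $4$) are fine, but the entire burden of the proof is then deferred to the last paragraph, where you must actually produce a fourth conjugacy class of non-self-normalizing subgroups in every remaining configuration; you state yourself that this ``bookkeeping'' is ``the crux of the proof'' and you do not carry it out. Worse, one of the two facts you say carry that argument is false as stated: a self-normalizing Sylow $q$-subgroup does \emph{not} force a normal $q$-complement unless it is abelian (Burnside's theorem needs $Q\le\zent{\norm G Q}$), and $\sym 4$ is a concrete counterexample -- its Sylow $2$-subgroup is self-normalizing and there is no normal $2$-complement. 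The parenthetical ``fixed-point-free-action argument when that Sylow subgroup is nonabelian'' is not an argument, and since $\sym 4$ and $\SL 3$ sit exactly at $\mathcal{D}=7$ and $\mathcal{D}=4$, these nonabelian-Sylow configurations are precisely the ones that must be excluded by hand. The paper does this by pinning down $|G|\in\{p^2q,\,p^2q^2,\,p^3q,\,p^3q^2\}$ and treating each order explicitly (e.g.\ using the extraspecial structure of a nonabelian normal Sylow subgroup of order $p^3$ to find two non-conjugate subgroups of order $p$); some replacement for that analysis is needed before your proof can be considered complete.
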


We will see that for $n \ge 4$, we no longer have that the groups in ${\mathcal D}_n$ are necessarily solvable.  
However, we have computed the derived lengths of many solvable groups using the computer algebra systems Magma \cite{magma} and GAP \cite{gap}.  From those calculations, it seems likely that when $G$ is solvable and $G \in {\mathcal D}_n$, then the derived length of $G$ is actually bound by a logarithmic function in $n$.   As a first step in this direction, we have the following result, although the bound obtained in this theorem seems to be much larger than those that occur for actual groups.   Hence, we expect that this bound can be improved. 

\begin{thm} \label{main dl log}
Let $n \ge 3$ be an integer.  If $G \in \mathcal{D}_n$ is solvable, then the derived length of $G$ is less than or equal to the minimum of $n-1$ and $3 \log_2 (n+1)+9$.	
\end{thm}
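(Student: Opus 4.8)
The plan is to establish the two bounds separately, since they dominate in different ranges of $n$. For the bound $\dl G \le n-1$, I would argue by induction on $n$, or more precisely by walking down a chief series of $G$ and tracking how many conjugacy classes of non-self-normalizing subgroups are "consumed" at each step. The key observation is that if $N \nor G$ with $1 < N < G$, then every non-self-normalizing subgroup of $G/N$ pulls back to a non-self-normalizing subgroup of $G$ containing $N$, and distinct conjugacy classes pull back to distinct ones; moreover $N$ itself is non-self-normalizing (being normal and proper), and so is any conjugacy class of subgroups strictly inside $N$ that remains non-self-normalizing in $G$. So one gets $\mathcal D(G) \ge \mathcal D(G/N) + 1$ at minimum, and iterating down a chief series of length $\ell$ gives $\mathcal D(G) \ge \ell$, hence $\dl G \le \ell \le \mathcal D(G) = n$. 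To sharpen $n$ to $n-1$ one must find one extra non-self-normalizing class not accounted for by the chief series count — for instance a proper nontrivial subgroup sitting inside the bottom chief factor, using that $G$ is solvable so the bottom chief factor is an elementary abelian $p$-group which (unless it has order $p$, a case handled by Propositions \ref{main D_0} and \ref{main D_1}) contains proper nontrivial subgroups, all of which are non-self-normalizing since a $p$-group is nilpotent. The small cases $n = 3$ (base case, covered by Theorem \ref{main solvable le 3}) anchor the induction.

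For the logarithmic bound $\dl G \le 3\log_2(n+1) + 9$, the strategy is to combine the nilpotent estimates (Theorems \ref{main p-group} and \ref{main nilpotent}) with a Fitting-series argument. Write $F = \fit G$ and consider the Fitting series $1 = F_0 < F_1 < \cdots < F_r = G$; since $G$ is solvable, $\dl G \le \sum_{i} \dl{F_i/F_{i-1}}$, and more usefully one controls $\dl G$ in terms of $r$ (the Fitting height) and the derived lengths of the factors. The point is that $F = F_1$ is nilpotent and lies in some $\mathcal D_m$ with $m \le n$ — actually one should show $\mathcal D(F)$ is bounded in terms of $n$, using that subgroups of $F$ that are non-self-normalizing in $F$ relate to non-self-normalizing subgroups of $G$ — so by Theorem \ref{main nilpotent} (or \ref{main p-group} applied to Sylow subgroups) the derived length of $F$ is $O(\log n)$. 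Then one needs the Fitting height $r$ itself to be $O(\log n)$: here I would exploit that passing from $F_i$ to $F_{i+1}$ forces the appearance of new conjugacy classes of non-self-normalizing subgroups — each Fitting quotient contributes at least one, and in fact a proper nontrivial subgroup of each nilpotent layer $F_{i+1}/F_i$ gives genuinely new classes — so that $r$ grows at most like $\log_2(n)$ after accounting for multiplicities, and each layer has derived length $O(\log n)$ as well. Assembling these with explicit constants (and absorbing small-$n$ discrepancies into the additive constant $+9$) yields the stated bound.

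The main obstacle, I expect, is the bookkeeping in the Fitting-series argument: one must show that the non-self-normalizing subgroups "seen" inside $\fit G$ and inside each Fitting quotient can be lifted to genuinely distinct conjugacy classes of non-self-normalizing subgroups of $G$, without double-counting across layers, and simultaneously that the \emph{number} of Fitting-height steps is itself logarithmically controlled rather than merely linearly. This is delicate because a subgroup that is self-normalizing in $G$ can have a non-self-normalizing image in a quotient, so the lifting direction (quotient $\to$ $G$) is the safe one and must be used throughout; controlling the Fitting height requires an argument that each new nilpotent layer is "expensive" in class count, likely via a Hall subgroup or a minimal normal subgroup of $G/F_i$ acting on which produces many new classes. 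Getting the constants to land at exactly $3\log_2(n+1)+9$ — as opposed to some larger expression — will require care in choosing which intermediate subgroups to exhibit, and the authors themselves signal this bound is not expected to be tight, so a somewhat lossy but clean argument should suffice.
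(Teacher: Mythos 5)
Both halves of your proposal have gaps, and in both cases the paper takes a shorter route. For the bound $n-1$: your plan is to bound the derived length by the length $\ell$ of a chief series and then show $\ell \le n$ (and with one extra class, $\ell\le n-1$) by counting down the series. This cannot work. The iteration ${\mathcal D}(G) \ge {\mathcal D}(G/N)+1$ only yields ${\mathcal D}(G) \ge \ell - 1$ (the top chief factor contributes nothing), and that inequality is sharp: $C_{p^4}$ lies in ${\mathcal D}_3$ but has chief length $4$, so even your claimed $\ell \le {\mathcal D}(G)$ is false, let alone $\ell \le n-1$. The chief (or composition) length is simply too large a quantity to compare with $n-1$. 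The paper instead inducts along the \emph{derived} series: take a minimal counterexample of derived length $d$, set $N = G^{(d-1)} \ne 1$, apply Lemma \ref{rem} to get $G/N \in {\mathcal D}_m$ with $m \le n-1$, and use minimality when $m \ge 3$ (giving $\dl{G/N} \le m-1$, hence $d \le n-1$) and Proposition \ref{metabelian} when $m \le 2$ (which forces $n=3$ and then $d\le 2$ directly). Each derived-series step drops the derived length by exactly one, which is what makes the count close.

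For the logarithmic bound, your Fitting-series program leaves both of its load-bearing steps unproved. First, to apply the nilpotent theorem to $\fit G$ you would need ${\mathcal D}(\fit G)$ --- an $\fit G$-conjugacy count --- to be bounded by $n$, a $G$-conjugacy count; but $G$ can fuse many $\fit G$-classes into one. For instance, in a Frobenius group with kernel $N\cong C_p\times C_p$ and complement acting transitively on the $p+1$ subgroups of order $p$, one has ${\mathcal D}(N)=p+1$ while those subgroups form very few $G$-classes, so ${\mathcal D}(\fit G)\le{\mathcal D}(G)$ fails badly. Second, you offer no mechanism forcing the Fitting height to be $O(\log n)$ rather than $O(n)$: ``each layer contributes at least one new class'' gives only a linear bound, and the sketched idea that each layer is exponentially expensive is precisely the hard content you would have to supply. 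The paper's actual proof of this half is two lines: the proper terms of a composition series are subnormal of pairwise distinct orders, hence non self-normalizing and pairwise nonconjugate, so the composition length is at most $n+1$; then Glasby's theorem \cite[Theorem 8]{G} bounds the derived length of a soluble group by $3\log_2(n+1)+9$ in terms of its composition length. The exact constants in the statement are Glasby's, which is the tell that a citation, not a Fitting-series computation, is the intended route.
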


As we stated above, we will classify the groups in ${\mathcal D}_2$ and ${\mathcal D}_3$.  We will see that as $n$ grows, the number of classes of groups grows, and we believe that for $n = 4$, the number of classes of groups in ${\mathcal D}_4$ is beyond what is profitable to classify.  It is not difficult to show that ${\rm A}_5$ lies in ${\mathcal D}_4$.  We next show that ${\rm A}_5$ is the only nonsolvable group in ${\mathcal D}_4$.

\begin{thm}\label{main A5} 
Let $G$ be a nonsolvable group. If $G\in\mathcal{D}_4$, then $G\cong\textnormal{A}_5$.
\end{thm}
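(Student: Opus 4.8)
The plan is to start from the known constraints: if $G$ is nonsolvable and lies in $\mathcal{D}_4$, then in particular $G$ has at most $4$ conjugacy classes of nontrivial non-self-normalizing subgroups. A useful first observation is that every nontrivial normal subgroup of $G$ is not self-normalizing (since it equals its own normalizer only if it is $G$ itself, which is excluded once $G$ is nontrivial and not of prime order—and a nonsolvable group is certainly not), and likewise every nontrivial subgroup contained in a proper normal subgroup, and every Sylow subgroup of $G$ for primes $p$ where the Sylow is not self-normalizing. So the budget of $4$ classes is very tight. First I would show that $G$ must be nonabelian simple, or at least pin down $\fitg{G}$: any nontrivial proper normal subgroup $N$ contributes at least the class of $N$ itself, and typically several classes of subgroups inside $N$, so having a nontrivial proper normal subgroup together with nonsolvability quickly overruns the budget. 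This should force $G$ to be a nonabelian simple group (or very close to it).

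Next, among nonabelian simple groups I would use the minimal-simple/$\mathrm{CIT}$-type classification (Thompson's classification of minimal simple groups, or just the classification of simple groups with small numbers of conjugacy classes of subgroups) to cut down to a short list. The key numerical lever: in a nonabelian simple group $G$, for each prime $p$ dividing $|G|$, the normalizer $\norm{G}{P}$ of a Sylow $p$-subgroup $P$ is proper, so $P$ is non-self-normalizing, giving at least one class per prime divisor of $|G|$; moreover each nontrivial subgroup of each $P$ that is normal in $\norm{G}{P}$, and more generally each subgroup lying inside a proper subgroup that is ``large'' relative to its normalizer, adds to the count. Since $|\mathrm{A}_5| = 2^2 \cdot 3 \cdot 5$ already has $3$ prime divisors, and $\mathrm{A}_5$ itself realizes exactly $4$ (the classes of $C_2$, $C_2^2$, $C_3$, $C_5$—noting $S_3$ and $A_4$ and $D_{10}$ are self-normalizing), one sees that any simple group with $4$ or more prime divisors, or with a Sylow subgroup of order $\ge p^3$, or with extra non-self-normalizing subgroups, is excluded. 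I would therefore argue: $|G|$ has at most $3$ prime divisors; combined with simplicity this restricts $G$ to a handful of families (among them $\PSL{q}$ for suitable $q$, and possibly $\mathrm{A}_6 \cong \PSL 9$, $\PSL 7$, $\mathrm{Sz}(8)$, $\PSL 8$, $U_3(3)$, etc.), and then a direct subgroup-structure check rules out each one except $\mathrm{A}_5$.

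The main obstacle I expect is the case-by-case elimination of the small simple groups with exactly three prime divisors, in particular $\PSL q$ for small $q$ and the low-rank groups $\PSL[3]{q}$, $\mathrm{Sz}(q)$, $U_3(q)$: one must verify in each that the number of conjugacy classes of nontrivial non-self-normalizing subgroups is at least $5$. For $\PSL q$ this is tractable by Dickson's classification of subgroups—one counts the classes coming from the $p$-Sylow (an elementary abelian group of order $q$, whose nontrivial subgroups up to conjugacy already proliferate once $q > 4$), the cyclic subgroups of the two tori, and so on, and checks how many of these are non-self-normalizing. The cleanest route is probably to prove a lemma of the form: if $G$ is nonabelian simple and $G \in \mathcal{D}_4$, then every proper subgroup of $G$ that is not self-normalizing has order dividing a product of at most two primes and a very restricted exponent, and then observe this is incompatible with the subgroup lattice of every simple group other than $\mathrm{A}_5$. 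Throughout I would lean on Theorem~\ref{main solvable le 3} and the already-established classifications of $\mathcal{D}_0,\dots,\mathcal{D}_3$ only indirectly; the real work is the simple-group analysis, and I would organize it so that the generic estimate (number of prime divisors $\le 3$, Sylows small) does most of the work and only a bounded explicit list remains for hand or machine verification.
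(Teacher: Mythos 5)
Your overall architecture coincides with the paper's: reduce to the case that $G$ is simple, bound the number of prime divisors of $|G|$ by three, invoke the classification of simple groups whose order has exactly three prime divisors (the paper cites Herzog's list ${\rm A}_5$, ${\rm A}_6$, ${\rm L}_2(7)$, ${\rm L}_2(8)$, ${\rm L}_2(17)$, ${\rm L}_3(3)$, ${\rm U}_3(3)$, ${\rm U}_4(2)$), and eliminate everything but ${\rm A}_5$. However, the ``key numerical lever'' you propose is wrong: from the fact that $\norm G P$ is a proper subgroup of the simple group $G$ you conclude that $P$ is not self-normalizing, but these are different statements --- $\norm G P<G$ only says that $P$ is not normal in $G$, whereas ``self-normalizing'' means $\norm G P=P$. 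Sylow subgroups of simple groups can perfectly well be self-normalizing: in ${\rm PSL}_2(17)$ the Sylow $2$-subgroup is dihedral of order $16$ and is a maximal subgroup, hence equals its own normalizer. So the claim ``at least one class per prime divisor'' fails, and with it your stated route to the bound of three prime divisors (a simple group of order $2^a p q r$ in which several odd Sylow subgroups are self-normalizing would survive your count).

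The correct substitute, and what the paper actually uses, is two-sided. On one side, any proper nontrivial subgroup of a Sylow $p$-subgroup $P$ is properly contained in its normalizer inside $P$, so it can never be self-normalizing in $G$; hence a Sylow subgroup of order at least $p^{k+1}$ contributes at least $k$ pairwise nonconjugate classes to the count. On the other side, if an abelian Sylow subgroup \emph{is} self-normalizing, then Burnside's normal $p$-complement theorem (Theorem \ref{Burnside}) produces a normal $p$-complement, contradicting simplicity; chaining this alternative over the various primes is what forces enough Sylow subgroups to be non-self-normalizing and overruns the budget of four classes once there are four or more prime divisors. You would also need an explicit argument for your ``or very close to it'' hedge, i.e.\ to rule out the case that $G$ is almost simple but not simple; the paper does this with a further round of the same Sylow/Burnside counting applied to the socle. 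Finally, a small slip in your candidate list: ${\rm Sz}(8)$ has order $2^6\cdot 5\cdot 7\cdot 13$, so it does not belong on a three-prime list.
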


Also, we can consider the solvable groups in ${\mathcal D}_4$.  It is not difficult to see that ${\rm SL}_2 (3)$ lies in ${\mathcal D}_4$ and has derived length $3$.  We show that this is the only solvable group in ${\mathcal D}_4$ whose derived length is $3$ and others have derived length at most $2$. 

\begin{thm} \label{main D=4,dl} 
Suppose $G$ lies in ${\mathcal D}_4$ and $G \neq {\rm A}_5$. Then $G$ is solvable and either has derived length at most $2$ or is isomorphic to ${\rm SL}_2 (3)$. 
\end{thm}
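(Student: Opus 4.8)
Since $G \neq {\rm A}_5$, Theorem~\ref{main A5} shows that $G$ is solvable; so from now on assume $G$ is solvable with $\dl(G) \geq 3$, and we must prove $G \cong {\rm SL}_2(3)$ (recall that ${\rm SL}_2(3)$ does lie in $\mathcal{D}_4$ and has derived length $3$). The first step is to pin down the normal structure of $G$. Let $N$ be a minimal normal subgroup of $G$. The conjugacy classes of nontrivial non-self-normalizing subgroups of $G/N$ correspond bijectively, via $H/N \mapsto H$, to the conjugacy classes of non-self-normalizing subgroups $H$ of $G$ with $N \subsetneq H$; since $N$ itself is a nontrivial non-self-normalizing subgroup not of this form, $\mathcal{D}(G/N) \leq \mathcal{D}(G) - 1 = 3$. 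By Theorem~\ref{main solvable le 3}, $G/N$ is solvable of derived length at most $2$, so $G'' \leq N$; as $\dl(G) \geq 3$ forces $G'' \neq 1$, minimality of $N$ gives $G'' = N$. Hence $N = G''$ is an elementary abelian $p$-group for some prime $p$, it is the unique minimal normal subgroup of $G$ (every minimal normal subgroup coincides with $G''$), and $\dl(G) = 3$. Moreover $G$ is not nilpotent, since by Theorems~\ref{main p-group} and \ref{main nilpotent} a nilpotent group in $\mathcal{D}_4$ has derived length at most $2$; in particular $|G|$ is divisible by some prime $q \neq p$. Finally $G' \neq G''$ (otherwise $G'$ would be a nontrivial perfect solvable group), so $G/G''$ is a nonabelian metabelian group and hence is not in $\mathcal{D}_0$ by Proposition~\ref{main D_0}; thus $\mathcal{D}(G/N) \in \{1,2,3\}$, and $G/N$ is one of the groups described in the classifications of $\mathcal{D}_1$, $\mathcal{D}_2$ and $\mathcal{D}_3$.

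Next I would use the counting identity $\mathcal{D}(G) = \mathcal{D}(G/N) + 1 + r$, where $r$ denotes the number of conjugacy classes of nontrivial non-self-normalizing subgroups of $G$ that do not contain $N$; thus $r = 3 - \mathcal{D}(G/N)$. If there is a prime $q \neq p$ dividing $|G|$ and a Sylow $q$-subgroup $Q$ of $G$ with either $|Q| > q$ or $N_G(Q) \neq Q$, then (taking a maximal subgroup of $Q$ in the first case, and $N_G(Q)$ in the second, using $N_G(N_G(Q)) = N_G(Q)$) $G$ has a nontrivial non-self-normalizing subgroup not containing $N$, so $r \geq 1$ and $\mathcal{D}(G/N) \leq 2$. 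Otherwise every Sylow $q$-subgroup of $G$ with $q \neq p$ has order $q$ and is self-normalizing; then Burnside's normal $q$-complement theorem, applied for each such $q$, together with an intersection argument, forces $G/O_p(G)$ to be cyclic of prime order and $G' \leq O_p(G) = F(G)$, where $F(G)$ is then a nonabelian $p$-group (because $\dl(G) = 3$) with $C_G(F(G)) \leq F(G)$. Using $G'' = [G',G'] \leq [F(G),F(G)] \leq \Phi(F(G))$ and the quotient reduction of the previous paragraph applied to $\Phi(F(G))$, a short argument then shows $Z(F(G)) = N$ has order $p$; so in this branch $G = P \rtimes C_q$ with $P = F(G)$ a nonabelian $p$-group on which $C_q$ acts faithfully, $|Z(P)| = p$ and $\dl(P) = 2$.

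It remains, in each branch, to reconstruct $G$ as an extension of the elementary abelian $p$-group $N = G''$ by the now explicitly known metabelian group $G/N$, subject to the severe constraints that $G'' = N$, that $N$ is irreducible as an $\mathbb{F}_p[G]$-module, and that $\mathcal{D}(G) = 4$. These force $|N| = p$, pin down the action of $G/N$ on $N$ and whether the extension splits, and leave only finitely many groups of bounded order; inspecting them shows that the only one having exactly four conjugacy classes of nontrivial non-self-normalizing subgroups is ${\rm SL}_2(3)$ (with $N = Z({\rm SL}_2(3))$ of order $2$ and $G/N \cong {\rm A}_4$). The main obstacle is precisely this last step: keeping the list of candidate extensions finite — which is exactly what the structural reductions above are designed to guarantee — and then carrying out the enumeration of conjugacy classes of subgroups and the verification of the normalizer condition to eliminate every candidate other than ${\rm SL}_2(3)$, a bounded computation that can also be confirmed with Magma or GAP as elsewhere in the paper.
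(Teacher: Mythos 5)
Your reduction framework is sound and broadly parallels the paper's: solvability via the nonsolvable classification, passing to a quotient by a normal subgroup, invoking the classifications of $\mathcal{D}_m$ for $m\le 3$, and the counting identity $\mathcal{D}(G)=\mathcal{D}(G/N)+1+r$. The identification $G''=N$ with $N$ the unique minimal normal subgroup, and the dichotomy on Sylow subgroups for primes $q\ne p$, are correct. However, there is a genuine gap at the decisive step. You assert that the constraints ``force $|N|=p$, pin down the action \dots and leave only finitely many groups of bounded order,'' and then defer to an ``inspection'' or a GAP/Magma check. Neither the finiteness nor the boundedness is established: the quotients $G/N$ allowed by your first branch range over \emph{infinite} families (e.g.\ Frobenius groups $C_{p^2}\rtimes C_q$, $(C_p\times C_p)\rtimes C_q$ with $p=2q-1$, $C_p\rtimes C_{q^2}$, etc., with unbounded primes $p,q$), and nothing in your outline bounds $p$ and $q$. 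Eliminating these candidates is exactly where the real work lies; in the paper it requires, among other things, orbit-counting on the subgroups of order $p$ in $N\cong C_p\times C_p$ to force $q=p+1$ (hence $p=2$, $q=3$) followed by a dihedral Sylow $2$-subgroup contradiction, a Frattini-argument count of Sylow classes inside $G'$, and an appeal to the structure of Frobenius complements (a complement of order $pq$ must be cyclic) to kill the $|N|=r^2$ configuration. None of this is present or replaceable by a ``bounded computation'' until the orders are actually bounded.

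A secondary, unproved claim in your second branch is that $\zent{\fit G}=N$ has order $p$: you know $N\subseteq\zent{\fit G}$ since $\zent{\fit G}$ is a nontrivial normal subgroup of $G$ and $N$ is the unique minimal normal subgroup, but equality and $|N|=p$ require an argument (your ``short argument'' via $\frat{\fit G}$ is only gestured at). More importantly, even granting $G=P\rtimes C_q$ with $P$ nonabelian and $|\zent P|=p$, you still must show $P/N\cong C_q\times C_q$ with $C_p\,$--- sorry, with $C_q$ acting transitively on its $q+1$ subgroups of order $q$ (forcing $p=q+1$, i.e.\ $q=2$, $p=3$ in the roles the paper uses), that $|N|=2$, and that $P\cong Q_8$; the paper does this by counting the $G$-classes of subgroups of $G'/N$ and by excluding $|G'|=16$ via a separate check. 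Your proposal stops exactly where these arguments begin, so as written it is a plan rather than a proof.
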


Some of the work on this project was done while the first, second, third and fifth authors were visiting Universit\`a degli Studi di Firenze in March and April 2024 and while the third author was visiting Fitzwilliam College at the University of Cambridge also in April 2024.  They would like to thank those universities for their hospitality.   The authors would also like to thank Gareth Jones for his comments and for sharing his preprint \cite{JCount} with them.

\section{background}

In this section we provide some background and begin to think about the problem. 
The question of classifying groups, both finite and infinite, in which all subgroups of a certain type are self-centralizing or self-normalizing has generated much interest. Note, a subgroup is called self-centralizing if it contains its centralizer. In particular, a self-normalizing subgroup is self-centralizing.
For example,  
G. Giordano in 1972, considered groups in which all non-normal subgroups are self-normalizing. Such groups are
 $T$-groups (i.e. groups in which normality is a transitive relation on the set of all subgroups) and they are periodic or abelian,
see \cite{Gi}. More recently, 
 finite groups in which all noncyclic subgroups are self-centralizing have been classified, see \cite[Theorem 2.1]{parker}. 
Finite groups in which all noncyclic subgroups are self-normalizing have also been classified, see \cite[Theorem 9.7]{DN}.
In \cite{maj}, which was the original motivation for this paper, the authors focus on non self-centralizing cyclic subgroups.
For an overview of results on this topic see \cite{DN}.


In this article we count (up to conjugacy) nontrivial subgroups of a group that are not self-normalizing and consider groups when this number is small. When looking for non self-normalizing subgroups, the first ones to come to mind are the proper normal subgroups.  And quickly one realizes that the proper subnormal subgroups will also be non self-normalizing. Thus
for nilpotent groups, which satisfy the condition that every proper subgroup is subnormal \cite[Lemma 2.1]{I}, we are counting conjugacy classes of proper nontrivial subgroups, and for abelian groups we are simply counting the number of proper nontrivial subgroups. The property that every proper subgroup
is non self-normalizing characterizes nilpotent groups, see \cite[Theorem 1.26]{I}, and thus for groups that are not nilpotent our count is more subtle.

If $P$ is a Sylow $p$-subgroup of $G$, then it is known and not difficult to show that $\norm G P$ is self-normalizing (see Problem 1.B.3 in \cite{I}).  In fact, if $\norm G P \le H \le G$, then $H = \norm G H$ (see Problem 1.C.1 in \cite {I}) and so, when $P$ is not normal, this is a way to find self-normalizing subgroups of $G$.  On the other hand, any proper subgroup of $P$ will be proper in its normalizer in $P$, and so, no proper subgroup of a Sylow subgroup can ever be self-normalizing. 

Recall that if $P$ is a Sylow $p$-subgroup so that $P$ is abelian and either $P =\norm G P$ or $P$  complemented in $\norm G P$, then $P \le \zent{\norm GP}$.  This is the situation of the so-called Burnside's normal $p$-complement theorem.  We will use this enough times that we will state it here.  This can be found as Theorem 5.13 in \cite{I}.

\begin{theorem}\label{Burnside} 
Let $G$ be a finite group, $p$ a prime, and $P$ a Sylow $p$-subgroup of $G$.  If $P$ is contained in the center of $\norm G P$, then $G$ has a normal $p$-complement.
\end{theorem}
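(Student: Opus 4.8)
The plan is to prove this via the transfer homomorphism, which is the classical route to Burnside's normal $p$-complement theorem. First I would record that the hypothesis $P \sbs \zent{\norm G P}$ forces $P$ to be abelian (every element of $P$ commutes with every element of $\norm G P \supseteq P$), so the transfer of $G$ into $P$ is well defined; let $V \colon G \to P$ denote this transfer homomorphism (as set up in Chapter~5 of \cite{I}). The target is to show that $V$ is surjective and that its restriction $V|_P$ is injective: granting this, $\ker V$ is a normal subgroup of $G$ with $\ker V \cap P = 1$ and $[G \colon \ker V] = |P|$, hence $\ker V$ has order $[G\colon P]$, which is coprime to $p$, and so $\ker V$ is the desired normal $p$-complement.

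The heart of the matter is the evaluation of $V$ on elements of $P$. I would use the standard cycle formula for the transfer: fixing right coset representatives of $P$ in $G$, for every $x \in P$ one can write $V(x) = \prod_j g_j\, x^{k_j}\, g_j^{-1}$, where the $k_j$ are positive integers with $\sum_j k_j = [G\colon P]$ and each element $g_j x^{k_j} g_j^{-1}$ lies in $P$. The next step is Burnside's fusion argument for elements of an abelian Sylow subgroup: if $u, v \in P$ are conjugate in $G$, say $v = u^g$, then since $P$ is abelian $P$ is a Sylow $p$-subgroup of both $\cent G u$ and $\cent G v$, while $P^g \sbs \cent G v$ is also Sylow there; a Sylow conjugacy inside $\cent G v$ yields $d \in \cent G v$ with $gd \in \norm G P$, and then $u^{gd} = v^d = v$. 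As $\norm G P$ centralizes $P$ by hypothesis, this gives $u = v$. Applying this with $u = x^{k_j}$ and $v = g_j x^{k_j} g_j^{-1}$ gives $g_j x^{k_j} g_j^{-1} = x^{k_j}$ for each $j$, whence $V(x) = x^{\sum_j k_j} = x^m$ with $m = [G\colon P]$.

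Since $|P|$ is a power of $p$ and $p \nmid m$, the map $x \mapsto x^m$ is an automorphism of $P$; in particular $V(P) = P$, so $V$ is onto and $V|_P$ is injective, and the conclusion follows as in the first paragraph. The only genuine work lies in the bookkeeping behind the transfer formula and in the fusion lemma; everything else is formal. I expect the transfer evaluation to be the main obstacle for a self-contained write-up, which is exactly why, in the paper, we prefer to quote this as \cite[Theorem~5.13]{I} rather than reprove it.
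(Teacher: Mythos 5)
Your proof is correct: it is the standard transfer argument (well-definedness of $V\colon G\to P$ from $P$ abelian, the evaluation formula, Burnside's fusion lemma forcing $V(x)=x^{[G:P]}$ on $P$, and then $\ker V$ as the normal $p$-complement), and every step, including the Sylow conjugation inside $\cent G v$ in the fusion lemma, checks out. The paper does not prove this statement at all but quotes it as Theorem~5.13 of \cite{I}, whose proof is essentially the one you give, so there is nothing to compare beyond noting that you have reconstructed the cited argument.
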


Groups whose Sylow subgroups are all cyclic are known as {\it Z-groups}.  The structure of $Z$-groups is very tight.  For example in Theorem 5.16 of \cite{I} the following is proved:

\begin{theorem} \label{z-group}
If $G$ is a $Z$-group, then $G/G'$ and $G'$ are cyclic and have coprime orders.
\end{theorem}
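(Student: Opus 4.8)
The plan is to induct on $|G|$, with Burnside's normal $p$-complement theorem (Theorem~\ref{Burnside}) applied at the smallest prime as the engine. If $|G|$ is a prime power then $G$ is cyclic, since its unique Sylow subgroup is, and there is nothing to prove; so assume $|G|$ has at least two prime divisors, let $p$ be the smallest, and let $P\in\syl pG$, a cyclic group of order $p^a$. Since $P$ is abelian we have $P\le\cent GP$, so $P$ is also a Sylow $p$-subgroup of $\cent GP$ and the index $[\norm GP:\cent GP]$ is prime to $p$; being isomorphic to a subgroup of $\aut P\cong\Z/p^{a-1}\Z\times\Z/(p-1)\Z$, it divides $p-1$. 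As every prime divisor of $|G|$ is at least $p$, this forces $\norm GP=\cent GP$, so $P\le\zent{\norm GP}$, and Theorem~\ref{Burnside} gives a normal $p$-complement $H$, with $G=H\rtimes P$ and $p\nmid|H|$. Now $H$ is again a $Z$-group with $|H|<|G|$, so induction applies: $A:=H'$ is cyclic, $H/A$ is cyclic, and $\gcd(|A|,|H/A|)=1$; in particular $A$ is a (characteristic in $H$, hence normal in $G$) Hall subgroup of $H$, and $A=H'\le G'$. Note too that $G'\le H$ since $G/H\cong P$ is abelian, and that $G/G'$, being abelian with cyclic Sylow subgroups, is automatically cyclic; so only the cyclicity of $G'$ and the relation $\gcd(|G'|,|G/G'|)=1$ require argument.

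Suppose first $A=1$, so $H$ is abelian, hence cyclic, and $G'\le H$ is cyclic. Moreover $G'=[H,P]$: the subgroup $[H,P]$ satisfies $[H,P]\nor G$ (it is $P$-invariant and lies in the abelian group $H$), and $G/[H,P]\cong(H/[H,P])\times P$ is abelian. Writing $H$ as the direct product of its cyclic Sylow subgroups $H_q$, the coprime action of $P$ gives $H_q=[H_q,P]\times\cent{H_q}P$, forcing $[H_q,P]$ to be $1$ or all of $H_q$; hence $[H,P]$ is a Hall subgroup of $H$, and, being prime to $p$ as well, a Hall subgroup of $G$. Therefore $\gcd(|G'|,|G/G'|)=1$.

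Now suppose $A\neq1$. Then $\bar G:=G/A$ is again a $Z$-group, since its Sylow subgroups are epimorphic images of cyclic groups, and $|\bar G|<|G|$, so by induction $\bar G'$ and $\bar G/\bar G'$ are cyclic of coprime order; since $A\le G'$, this reads: $G'/A$ is cyclic and $\gcd(|G'/A|,|G/G'|)=1$. Because $A$ is cyclic and normal in $G$, conjugation gives a homomorphism $G\to\aut A$ with kernel $\cent GA$ into an abelian group, so $G'\le\cent GA$ and hence $A\le\zent{G'}$. Then $G'/\zent{G'}$ is a quotient of the cyclic group $G'/A$, so $G'$ is abelian, and therefore cyclic, being a subgroup of the $Z$-group $G$ with cyclic Sylow subgroups. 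Finally, $|G/G'|=p^a\,|H/A|/|G'/A|$ divides $p^a|H/A|$, which is prime to $|A|$, so $\gcd(|A|,|G/G'|)=1$; combined with $\gcd(|G'/A|,|G/G'|)=1$ this gives $\gcd(|G'|,|G/G'|)=\gcd(|A|\cdot|G'/A|,|G/G'|)=1$.

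The step I expect to be the main obstacle is the structural conclusion in the case $A\neq1$: a priori $G'$ is merely an extension of the cyclic group $A$ by the cyclic group $G'/A$, and such extensions need not be cyclic. What rescues it is that a cyclic \emph{normal} subgroup has abelian automorphism group, forcing $G'$ to centralize $A$, after which $G'/\zent{G'}$ is cyclic and $G'$ collapses to an abelian, hence cyclic, group. The remaining work — arranging $\norm GP=\cent GP$ through minimality of $p$ so that Burnside applies, and tracking the coprimeness relations across the two invocations of the inductive hypothesis — is routine but must be done carefully.
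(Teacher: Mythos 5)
Your proof is correct. The paper gives no proof of this statement (it is quoted as Theorem 5.16 of Isaacs's \emph{Finite Group Theory}), and your argument --- forcing $\norm GP=\cent GP$ at the smallest prime $p$ so that Theorem~\ref{Burnside} yields a normal $p$-complement, then inducting --- is essentially the standard proof of that cited result, with all the delicate points (cyclicity of $G'$ via $A\le\zent{G'}$, and the coprimality bookkeeping) handled correctly.
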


As previously mentioned we will consider ${\mathcal D}_n$ for small values of $n$. We will see that many of the groups in these sets are Frobenius groups,  thus, we provide some background on Frobenius groups.  One of the best treatments of Frobenius groups is found in \cite{H} in Section V.8.  For an English treatment of Frobenius groups, we recommend Section 6 of \cite{I}.

Let $H\neq 1$ be a proper subgroup of a group $G$.  We say $H$ is a {\it Frobenius complement} if $H \cap H^g = 1$ for all $g \in G \setminus H$.  We say $G$ is a {\it Frobenius group} if $G$ contains a subgroup $H$ that is a Frobenius complement.  We let $N = G \setminus \cup_{g \in G} (H \setminus \{1\})^g$.   Frobenius' theorem is that $N$ is a normal subgroup of $G$ (called the {\it Frobenius kernel} of $G$).  It is not difficult to see that $G = HN$ and $H \cap N = 1$.  Frobenius has shown that $(|H|, |N|) = 1$ and that  the Sylow subgroups of $H$ are either cyclic or generalized quaternion.  In particular, the Sylow subgroups for odd primes are cyclic.  J. G. Thompson in his dissertation proved that $N$ (the Frobenius kernel) is nilpotent. 

\section{Preliminary results}\label{sect:prelim}


The classification of groups in ${\mathcal D}_0$ is almost trivial, so we start with it.

\begin{proposition} \label {D_0}
A group is in ${\mathcal D}_0$ if and only if it has prime order.
\end{proposition}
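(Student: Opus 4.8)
The plan is to prove both directions of the equivalence, and the bulk of the work is in the forward direction. For the easy direction, suppose $G$ has prime order $p$. Then the only subgroups of $G$ are the trivial subgroup and $G$ itself; since $G = \norm G G$ is self-normalizing, there are no nontrivial subgroups of $G$ that fail to be self-normalizing, so $G \in {\mathcal D}_0$.

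For the converse, suppose $G \in {\mathcal D}_0$, so that every nontrivial subgroup of $G$ is self-normalizing. First I would observe that $G$ cannot have a proper nontrivial normal subgroup $N$: indeed, a proper normal subgroup satisfies $\norm G N = G \neq N$, so it would not be self-normalizing. Hence $G$ is simple. Next, $G$ cannot be nontrivial and nilpotent unless it has prime order, because in a nilpotent group every proper subgroup is properly contained in its normalizer (this is the characterization recalled in the background section, \cite[Theorem 1.26]{I}); so any proper nontrivial subgroup would witness membership failure. Thus it remains to rule out the case that $G$ is a nonabelian simple group. For that I would use the Sylow-subgroup observations from the background: if $P$ is a Sylow $p$-subgroup of $G$, then $P$ is nontrivial, and any proper subgroup $Q < P$ satisfies $Q < \norm P Q \le \norm G Q$, so $Q$ is not self-normalizing — contradiction, provided $P$ has a proper nontrivial subgroup, i.e. provided $|P| > p$. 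So all Sylow subgroups of $G$ have prime order; equivalently, $|G|$ is squarefree. But a group of squarefree order is solvable (for instance it is a $Z$-group, and $Z$-groups are solvable by Theorem \ref{z-group}), so a nonabelian simple $G$ of squarefree order is impossible. Therefore $G$ is simple of prime order, as desired.

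The only mild obstacle is making sure the argument genuinely covers the order-$1$ trivial group and the possibility $|P| = p$ uniformly: when $G$ itself has prime order, $P = G$ has no proper nontrivial subgroup, so the Sylow argument yields no contradiction, which is exactly what we want. Writing this cleanly, I would handle it as: either $G = 1$ (excluded, as $1 \notin {\mathcal D}_0$ — or if one includes it, adjust the statement), or $G$ is nontrivial simple with all Sylow subgroups of prime order, forcing $|G|$ squarefree and hence $G$ solvable, hence $G$ of prime order. A slicker alternative avoiding the simplicity/squarefree detour: take any nontrivial subgroup $H \le G$ with $H \neq G$; since $H$ is self-normalizing and $G$ is in particular \emph{not} forced to be nilpotent, one still needs the Sylow input to bound $|G|$, so the route through squarefree order seems the most economical.
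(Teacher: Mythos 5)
Your proof is correct and follows essentially the same route as the paper's: deduce simplicity from the absence of proper nontrivial normal subgroups, force every Sylow subgroup to have prime order via the normalizer-growth property inside Sylow subgroups, and then invoke the $Z$-group structure theorem to get solvability and hence prime order. The extra remarks about the nilpotent case and the trivial group are harmless refinements of the same argument.
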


\begin{proof}
Since  groups of prime order do not have proper subgroups, they belong to ${\mathcal D}_0$.  Conversely, suppose that $G \in {\mathcal D}_0$.  This implies that $G$ has no proper nontrivial normal subgroups.  Hence, $G$ is simple.  If $G$ is a $p$-group, this proves the result.  We suppose $G$ is not a $p$-group, and let $p$ be a prime divisor of $|G|$.  We have seen that if $|P| > p$, then the proper nontrivial subgroups of $P$ yield non self-normalizing subgroups of $G$.  Thus, every Sylow subgroup of $G$ must have prime order.  In particular, every Sylow subgroup is cyclic and by Theorem \ref{z-group}, $G$ is solvable, so the theorem is proved. 
\end{proof}

We next study the behavior of $\mathcal{D}_n$ with respect to factor groups. In what follows, for a given group $G$, we denote by $\mathcal{D}(G)$ the number of conjugacy classes of nontrivial subgroups of $G$ that are not self-normalizing; also, if $N$ is a normal subgroup of $G$, we set ${\mathcal D}_G (N)$ to be the number of $G$-conjugacy classes of nontrivial and non self-normalizing subgroups of $G$ that are properly contained in $N$.  Observe that if $N = 1$ then ${\mathcal D}_G (N) = 0$, and if $N$ is central in $G$ then ${\mathcal D}_G (N) = {\mathcal D} (N)$.  

\begin{lemma} \label{rem} 
Let $G$ be a group and $N\neq 1$ a proper normal subgroup of $G$.  If $${\mathcal D} (G) - {\mathcal D}_G (N) = m,$$ then ${\mathcal D} (G/N) \le m-1$.   In particular, ${\mathcal D} (G/N) \le {\mathcal D} (G) - 1$.
\end{lemma}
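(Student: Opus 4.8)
The plan is to exploit the correspondence theorem between subgroups of $G/N$ and subgroups of $G$ lying above $N$, and then to carry out a bookkeeping argument that partitions the relevant conjugacy classes of $G$ according to their position relative to $N$.

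First I would record the standard facts: the subgroups of $G/N$ are precisely the subgroups $H/N$ with $N \le H \le G$; for such $H$ we have $\norm{G/N}{H/N} = \norm{G}{H}/N$; and $H_1/N$ is conjugate to $H_2/N$ in $G/N$ if and only if $H_1$ is conjugate to $H_2$ in $G$. From the normalizer identity, $H/N$ is non self-normalizing in $G/N$ exactly when $H$ is non self-normalizing in $G$, and $H/N$ is nontrivial exactly when $N < H$. Hence ${\mathcal D} (G/N)$ equals the number of $G$-conjugacy classes of subgroups $H$ of $G$ with $N < H < G$ that are not self-normalizing in $G$ (note $G/N$ is self-normalizing in itself, so it is excluded, just as $G$ itself is excluded on the other side).

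Next I would partition the set of $G$-conjugacy classes of nontrivial, non self-normalizing subgroups of $G$ — a set of cardinality ${\mathcal D}(G)$ — using the position of a representative $H$ with respect to $N$. Because $H \le N$ forces $H^g \le N$ for all $g \in G$, and $N \le H$ forces $N \le H^g$ for all $g \in G$, this partition is well defined on conjugacy classes. The classes then fall into four pairwise disjoint families: (i) those with $H$ properly contained in $N$, which number ${\mathcal D}_G (N)$ by definition; (ii) the single class $\{N\}$, since $N$ is normal, nontrivial and proper, hence non self-normalizing; (iii) those with $N < H < G$, which number ${\mathcal D}(G/N)$ by the previous paragraph; and (iv) the remaining classes, say $t$ of them with $t \ge 0$, namely those having $H \not\le N$ and $N \not\le H$.

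Summing gives ${\mathcal D}(G) = {\mathcal D}_G(N) + 1 + {\mathcal D}(G/N) + t$, so $m = {\mathcal D}(G) - {\mathcal D}_G(N) = 1 + {\mathcal D}(G/N) + t \ge 1 + {\mathcal D}(G/N)$, which is the asserted bound ${\mathcal D}(G/N) \le m-1$; the ``in particular'' clause is then immediate from ${\mathcal D}_G(N) \ge 0$. There is no serious obstacle here — the only delicate points, all dispatched by the observations above, are verifying that $N$ contributes exactly one class lying in none of the other families, and that neither $G$ nor $G/N$ is ever counted.
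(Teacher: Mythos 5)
Your proof is correct and rests on the same two ingredients as the paper's: the correspondence theorem together with the identity $\norm{G/N}{H/N}=\norm GH/N$, with the class of $N$ itself accounting for the loss of $1$. The only cosmetic difference is that you organize the count as an exact partition ${\mathcal D}(G)={\mathcal D}_G(N)+1+{\mathcal D}(G/N)+t$, whereas the paper phrases it as a surjection from classes of overgroups of $N$ onto classes of $G/N$; the content is the same.
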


\begin{proof} 
Let us denote by $\mathcal {S}$ the set of $G$-conjugacy classes $$\{H^G \mid H \text{ is a subgroup of } G \text{ such that } H \supseteq N\},$$ and by $\mathcal {T}$ the set of all the $G/N$-conjugacy classes $\{ (H/N)^{G/N} \mid H/N\text{ is a subgroup of } G/N\}$.  Also, denote by $\pi$ the canonical projection of $G$ onto $G/N$ and, for every $H^G \in \mathcal {S}$, set $f(H^G) = \pi(H)^{G/N}$.  Clearly, this defines a surjective map $f: \mathcal {S} \rightarrow \mathcal {T}$. 

Now, let $\mathcal {S}_0$ be the set consisting of the $G$-conjugacy classes in $\mathcal {S}$ of subgroups that are not self-normalizing in $G$, and let $\mathcal {T}_0$ be the set of $G/N$-conjugacy classes in $\mathcal {T}$ of subgroups that are not self-normalizing in $G/N$. Since, for every subgroup $H$ of $G$ containing $N$, we have $\norm {G/N}{H/N} = \norm G H/N$, the restriction of $f$ to $\mathcal {S}_0$ maps onto $\mathcal {T}_0$. In particular, we obtain $|\mathcal {T}_0| \leq |\mathcal {S}_0|$.  Note that $\mathcal {S}_0$ is clearly contained in the set of $G$-conjugacy classes of subgroups of $G$ which are not self-normalizing and not properly contained in $N$ (whose number is $m$ by our hypothesis); observe also that ${\mathcal D} (G/N) = |\mathcal {T}_0| - 1$ because ${\mathcal D} (G/N)$ counts all the classes in $\mathcal {T}_0$ except the trivial one.  It follows that ${\mathcal D} (G/N) \leq m - 1$, as claimed.
\end{proof}

As a consequence of the above lemma, if $N$ is a nontrivial proper normal subgroup of $G \in {\mathcal D}_n$, then $G/N$ lies in ${\mathcal D}_m$ for a suitable $m \leq n-1$.  
We now start analyzing the relationship between $\mathcal{D}_n$ and solvability. This connection will be investigated more thoroughly starting in Section \ref{sect:D_3}.

\begin{proposition} \label{metabelian} 
If $G$ is a group belonging to $\mathcal{D}_n$ for $n\leq 3$, then $G$ is solvable with derived length at most $2$.
\end{proposition}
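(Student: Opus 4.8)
The plan is to argue by induction on $|G|$, using Lemma \ref{rem} (so that every proper nontrivial quotient of $G$ again satisfies the inductive hypothesis) together with elementary counting of conjugacy classes of non-self-normalizing subgroups. Throughout I would use the two basic facts recalled above --- every proper subgroup of a Sylow $p$-subgroup of $G$, and every nontrivial subnormal subgroup of $G$, is non-self-normalizing --- and I would repeatedly invoke Burnside's normal $p$-complement theorem (Theorem \ref{Burnside}) to rule out self-normalizing abelian Sylow subgroups.

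First I would clear away two easy cases. If $G$ is a $p$-group, then $\mathcal D(G)$ is the number of conjugacy classes of proper nontrivial subgroups; a short count --- a nonabelian $p$-group has at least $p+1\ge 3$ normal subgroups of index $p$, which with its center give at least four classes, while a noncyclic abelian $p$-group other than $C_2\times C_2$, and any $p$-group of order at least $p^5$, also has at least four --- forces $G$ to be cyclic of order at most $p^4$ or $C_2\times C_2$, hence abelian. If $G$ is not a $p$-group but all its Sylow subgroups are cyclic, then $G$ is a $Z$-group and $G'$ is cyclic by Theorem \ref{z-group}, so $G''=1$. From now on $G$ is not a $p$-group and has a noncyclic Sylow subgroup; in particular $G$ is not of prime order, so if $G$ were simple it would be nonabelian, hence (by Theorem \ref{z-group}) not a $Z$-group and divisible by at least three primes, and a prime-by-prime count --- one non-self-normalizing subgroup of prime-power order per prime, the prime with the larger Sylow contributing two, Theorem \ref{Burnside} killing self-normalizing abelian Sylows --- would give at least four classes, against $n\le 3$. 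So $G$ has a minimal normal subgroup $N$; by Lemma \ref{rem}, $G/N\in\mathcal D_m$ with $m\le n-1\le 2$, and since $|G/N|<|G|$ the inductive hypothesis yields that $G/N$ is solvable with derived length at most $2$, i.e. $G''\le N$.

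Next I would show $N$ is abelian, making $G$ solvable. Were $N$ nonabelian it would be a direct product $S_1\times\cdots\times S_t$ of nonabelian simple groups; the proper subnormal subgroups $S_1,\,S_1S_2,\dots$ coming from the factors, together with $N$, have pairwise distinct orders, and inside one factor $S_1$ one finds (one prime at a time, via Theorem \ref{Burnside}) at least three non-self-normalizing subgroups of prime-power order, so $\mathcal D(G)\ge 4$, a contradiction. Hence $N$ is an elementary abelian $q$-group, $G$ is solvable, and $\dl G\le\dl{G/N}+\dl N\le 3$. It then remains to exclude $\dl G=3$: here $G''\ne 1$ forces $N=G''$, so $N$ is the unique minimal normal subgroup of $G$, $G'$ is nonabelian with $(G')'=N$, and $G/G'$ is a nontrivial abelian group; one must show $\mathcal D(G)\ge 4$. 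Beyond the classes $[G']$ and $[N]$ one has to find two more, and the argument here splits into cases according to the Sylow structure of $G'$: proper subgroups of a Sylow subgroup of $G'$ (or of $G$) usually supply them, but in the tightest configurations --- such as the one realised by $\mathrm{SL}_2(3)$ --- the last class is that of a non-self-normalizing Sylow subgroup of $G$ lying outside $G'$, produced by a Frattini argument together with Theorem \ref{Burnside}. In every case $\mathcal D(G)\ge 4$, contradicting $n\le 3$, so $\dl G\le 2$.

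I expect the genuinely delicate step to be this last one, the exclusion of derived length $3$: the bound is sharp there, since $\mathrm{SL}_2(3)\in\mathcal D_4$ has derived length $3$, so the count must be run carefully and the ``fourth'' conjugacy class sits in different places in different configurations (inside $G'$, or among the complements of $G'$). Controlling how the subgroups of a Sylow subgroup fuse into $G$-conjugacy classes, and recognising exactly when Burnside's theorem applies, is where the work lies; the remaining cases are routine once this counting philosophy is in place.
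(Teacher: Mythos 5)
Your overall strategy (induction on $|G|$ via Lemma \ref{rem}, a minimal normal subgroup $N$, ruling out simple and characteristically simple configurations by counting one class per prime plus Burnside/Frattini) is coherent and differs from the paper's route: the paper uses no induction at all, but instead bounds the exponent of a Sylow subgroup of largest exponent to force $|G|=p^aq^b$ with $3\ge a\ge 2\ge b$, and then disposes of each pair $(a,b)$ by hand. Your reduction down to ``$G$ solvable, $N=G''$ the unique minimal normal subgroup, elementary abelian, $\dl G=3$'' is essentially sound (modulo the small point that ``a nonabelian simple group is divisible by at least three primes'' is Burnside's $p^aq^b$-theorem, not Theorem \ref{z-group}).

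However, the decisive step --- showing that $\mathcal D(G)\ge 4$ whenever $G''\ne 1$ --- is not proved; it is only announced. You write that beyond $[G']$ and $[N]$ ``one has to find two more'' classes and that ``the argument splits into cases according to the Sylow structure of $G'$,'' but you never run those cases, and you yourself flag this as ``where the work lies.'' This is precisely the part of the statement that is sharp (witness ${\rm SL}_2(3)\in\mathcal D_4$ with derived length $3$), so it cannot be left to a counting philosophy. Concretely: when $G'$ is a nonabelian $q$-group of order $q^3$ with $Q=G'$ a full Sylow $q$-subgroup, the subgroups of orders $q$ and $q^2$ together with $G'$ may yield only three classes (e.g.\ $G'\cong Q_8$ has a unique subgroup of order $2$, namely $N=\zent{G'}$), and the fourth class must come from a $q$-complement $H$; whether $H$ is non-self-normalizing depends on whether $\zent{G'}$ is central in $G$, which is automatic for $q=2$ (since $\aut{C_2}=1$) but fails in general for odd $q$ and extraspecial $G'$, where the fourth class has to be located differently (e.g.\ via $\cent GN$ or a further normal subgroup between $G'$ and $G$). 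Until this case analysis is actually carried out and each configuration is shown to produce a fourth class, the proof is incomplete. The paper avoids this difficulty entirely by first pinning down $|G|=p^aq^b$ with $a\le 3$, $b\le 2$ and then exhibiting an abelian normal subgroup containing $G'$ in each case.
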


\begin{proof}  
We can assume that $G$ is not abelian.  If every Sylow subgroup of $G$ is cyclic then, by Theorem~\ref{z-group}, 
$G/G'$ and $G'$ are cyclic groups of coprime orders; in particular, we can assume that there exists a prime $p$ such that $p^2$ divides $|G|$.  We consider among all the Sylow subgroups of $G$, the prime $p$ such that the order of the Sylow $p$-subgroup $P$ has the largest possible exponent. We write \(|P| = p^a\) with \(a \geq 2\).

We have that  $a$ cannot be larger than $4$, as otherwise $P$  (and therefore  $G$) would have subgroups of order $p$, $p^2$, $p^3$ and $p^4$ that are (pairwise nonconjugate and) not self-normalizing; on the other hand, if $|P|=p^4$, then we claim that $G=P$. In fact, if $q\neq p$ is a prime divisor of $|G|$ and $Q$ is a Sylow $q$-subgroup of $G$, then $Q$ must have order $q$ (as otherwise any subgroup of order $q$ would be normalized by a strictly larger $q$-subgroup of $G$); hence $Q$ is abelian and, since  it must be self-normalizing, Theorem~\ref{Burnside} yields the contradiction that $G$ has a normal $q$-complement. Now, $G=P$ must have a unique maximal subgroup, hence $G$ is cyclic.

Hence, either $a = 2$ or $a = 3$. Furthermore, if $q\neq p$ is a prime divisor of $|G|$, then $q^3$ cannot divide  $|G|$; therefore  the Sylow $q$-subgroups of $G$ are abelian.



Our first claim is that $|G|$ cannot be divisible by three distinct primes. Suppose, on the contrary, that there exist two distinct primes $q$  and $r$, different from $p$, both dividing $|G|$. Set  $Q\in\syl q G$, $R\in\syl r G$ and  $|Q|=q^{b}$, $|R|=r^{c}$ where $b,c \in \{1,2\}$.


Suppose that  $G$ has two subgroups, of order $p$ and $p^2$ respectively, that are not self-normalizing.   Then either $Q$ or $R$ must be self-normalizing. Assume this is $Q$: Theorem~\ref{Burnside} yields that $G$ has a normal $q$-complement, whence $R$ must be self-normalizing as well. But now $G$ also has a normal $r$-complement, a contradiction.

Now, suppose $a = 2$ and that $P$ is self-normalizing. By applying Theorem~\ref{Burnside} again, $G$ has a normal $p$-complement. This would imply that $G$ has two subgroups, one of order $p$ and the other being the $p$-complement, that are not self-normalizing. This situation leads to a contradiction, following the same reasoning as in the previous paragraph.

Our conclusion so far is that the order of $G$ is divisible by at most two distinct primes (note that this already implies the solvability of $G$ via Burnside's $p^aq^b$-Theorem: the character theory proof of this result can be found as Theorem 3.10 of \cite{Isaacs}), hence we write $|G|=p^aq^b$ and we can assume $3\geq a\geq 2 \geq b$. 

If $a=3$ and $b=2$, then $G$ has subgroups of order $p$, $p^2$ and $q$ that are not self-normalizing, thus forcing $Q$ to be self-normalizing: but $Q$ is abelian, hence again $P$ is normal in $G$ and we have a contradiction.

For $a= 3$, it  remains to consider the case when $b=1$ (so $G$ has subgroups of order $p$ and $p^2$ that are not self-normalizing). Since a direct check shows that the symmetric group $\textnormal{S}_4$ lies in $\mathcal{D}_7$, we have $G\not\cong\textnormal{S}_4$ and, as well-known, either $P$ or $Q$ are normal in $G$ (\cite[Theorem 1.32]{I}). Assume first $P\trianglelefteq G$: as $G'\subseteq P$, if $P$ is abelian then we get the desired conclusion, hence we can assume that $P$ is an extraspecial $p$-group of order $p^3$. But in this case $G$ has in fact two subgroups of order $p$ that are not conjugate in $G$ (namely, the center of $P$ and the subgroup generated by any non-central element of $P$ having order $p$), and we have a contradiction. On the other hand, assume $Q\trianglelefteq G$ and consider the normal subgroup $C=\cent GQ=Q\times\cent P Q$ of $G$. Observe that, since $P$ is not normal in $G$, $\cent P Q$ is a proper subgroup of $P$, whence $C$ is abelian; moreover, $G/C$ is also abelian, as it embeds in the automorphism group of $Q$. As a consequence, $G'\subseteq C$ is abelian and again we are done.

Next, assume $|G|=p^2q^2$. In this case, $G$ has subgroups of order $p$ and $q$ that are not self-normalizing, thus either $P$ or $Q$ must be self-normalizing; if this is $P$, then the abelian group $Q$ is normal in $G$, and $G/Q\cong P$ is also abelian, yielding the desired conclusion. Finally, if $|G|=p^2q$, then either $P$ or $Q$ are normal in $G$ 
and, as they are both abelian, the proof is complete.
\end{proof}

We close this section by describing ${\mathcal D}_1$.  We see that we have nonabelian examples that are solvable with derived length equal to $2$.  Note that when $G$ is a Frobenius group of order $pq$ with primes $p$ and $q$ where $q < p$, then $q$ divides $p-1$.

\begin{proposition} \label{D_1} 
The only abelian groups in ${\mathcal D}_1$ are cyclic groups of order $p^2$, where $p$ is a prime number.  Meanwhile, the only nonabelian groups in  ${\mathcal D}_1$ are Frobenius groups of order $pq$,  where $p$ and $q$ are prime numbers. 
\end{proposition}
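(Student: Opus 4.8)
The plan is to handle the abelian and the nonabelian groups separately, in each case first extracting the structural constraints from the condition $\mathcal{D}(G)=1$ and then verifying the converse. For an abelian group, $\mathcal{D}(G)$ is simply the number of proper nontrivial subgroups. If $G$ is not cyclic then some Sylow subgroup of $G$ is noncyclic and hence contains a copy of $C_p\times C_p$, which already contributes $p+1\ge 3$ subgroups of order $p$; so $G$ must be cyclic, say of order $n$, and then the proper nontrivial subgroups correspond to the divisors of $n$ strictly between $1$ and $n$. Requiring exactly one such divisor forces $n=p^2$, and conversely $C_{p^2}$ has a unique proper nontrivial subgroup, so it lies in $\mathcal{D}_1$.

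For the nonabelian case I would start from Proposition~\ref{metabelian}, which gives that $G$ is solvable with $G''=1$, so $G'$ is a nontrivial abelian proper normal subgroup of $G$. Being normal and proper, $G'$ is not self-normalizing, and since $\mathcal{D}(G)=1$ its (singleton) conjugacy class is the only class of nontrivial non-self-normalizing subgroups; hence every nontrivial non-self-normalizing subgroup of $G$ equals $G'$, and in particular $G'$ is the unique proper nontrivial normal subgroup of $G$. From this: $G/G'$ is simple and abelian, hence $G/G'\cong C_r$ for a prime $r$; and, since the Sylow subgroups of the abelian group $G'$ are characteristic in $G'$ and therefore normal in $G$, the group $G'$ is a $p$-group for a single prime $p$. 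The case $r=p$ is impossible, since then $G$ would be a nonabelian $p$-group with $Z(G)=G'$ and $|G/Z(G)|=p$. So $p\ne r$, $|G|=p^m r$, the normal subgroup $G'$ is the full Sylow $p$-subgroup of $G$, and $G=G'\rtimes R$ with $|R|=r$.

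To finish, I would pin down the structure. Since $|R|\ne|G'|$, the subgroup $R$ is not conjugate to $G'$, so $R$ must be self-normalizing; writing $N_G(R)=N_{G'}(R)\rtimes R$ and using the coprime action of $R$ on $G'$ to identify $N_{G'}(R)=C_{G'}(R)$, self-normalization gives $C_{G'}(R)=1$. As $|R|$ is prime this means $R$ acts fixed-point-freely on $G'$, i.e.\ $G=G'\rtimes R$ is a Frobenius group with kernel $G'$. Finally, if $|G'|=p^m$ with $m\ge 2$, then the abelian group $G'$ would contain a proper nontrivial subgroup $K$, which is normalized by $G'$ and hence would be a further non-self-normalizing subgroup different from $G'$ — a contradiction; thus $|G'|=p$ and $|G|=pr$. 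For the converse, any Frobenius group of order $pq$ is nonabelian, and inspecting its subgroups — the normal Sylow subgroup of order $p$, which is not self-normalizing, together with the conjugate complements of order $q$, which are self-normalizing since they are not normal — shows that it lies in $\mathcal{D}_1$.

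The arguments are elementary, so the main point requiring care is the bookkeeping of self-normalization: the key observations are that $G'$, and every proper nontrivial subgroup of the abelian group $G'$, are automatically non-self-normalizing, whereas a Sylow complement is forced to be self-normalizing, and it is the tension between these facts — through the standard coprime-action identity $N_{G'}(R)=C_{G'}(R)$ — that pins down both the Frobenius structure and the order $pr$.
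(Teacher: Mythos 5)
Your proposal is correct and follows essentially the same route as the paper: reduce via Proposition~\ref{metabelian} to derived length at most $2$, count subgroups directly in the abelian case, and in the metabelian case show that $G'$ and $G/G'$ both have prime order (the paper does this by applying Lemma~\ref{rem} and Proposition~\ref{D_0} to $G/G'$ and then to the abelian group $G'$, which is equivalent to your observation that $G'$ is the unique proper nontrivial normal subgroup). The only difference is one of detail: you explicitly verify the Frobenius structure via the coprime-action identity $\norm {G'}{R}=\cent{G'}{R}$ and the self-normalization of $R$, a step the paper leaves implicit.
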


\begin{proof}
Observe that if $G = C_{p^2}$ or $G$ is a Frobenius group of order $pq$ where $p$ and $q$ are primes, then $G \in {\mathcal D}_1$.  Conversely, assume $G$ is a group in ${\mathcal D}_1$. By Proposition \ref{metabelian}, we know that $G$ is solvable and has a derived length of either $1$ or $2$. If the derived length is $1$, then $G$ is abelian, and all its subgroups are normal in $G$. Since $G \in {\mathcal D}_1$, it follows that $G$ has only one proper, nontrivial subgroup, which implies that $G$ must be cyclic of order $p^2$ for some prime $p$. 
	
If $G$ has a derived length of 2, then $G'$ is the only proper, nontrivial, non self-normalizing subgroup of $G$. By Lemma \ref{rem}, $G/G'$ is in ${\mathcal D}_0$, and applying Proposition~\ref{D_0} shows that $G/G' \cong C_q$ for some prime $q$. Since $G'$ is abelian and the only non self-normalizing subgroup of $G$, which means $G'$ is also in ${\mathcal D}_0$. Thus, $G'$ is a cyclic group of prime order $p$, distinct from $q$, because if $p$ were equal to $q$, $G$ would be abelian. Therefore, $G$ is a Frobenius group of order $pq$.
\end{proof}


\section{Nilpotent groups}



When $n = 0$ or $1$, we saw in propositions~\ref{D_0} and \ref{D_1} that all of the nilpotent groups lying in ${\mathcal D}_n$ are cyclic of order $p$ or order $p^2$ where $p$ is a prime.  Furthermore, we noted in the introduction that the cyclic group of order $p^{n+1}$ will lie in ${\mathcal D}_n$.  For the purposes of computing nilpotency class and derived length, it suffices to look at the noncyclic $p$-groups, and so, we assume $n > 1$.  Since all of the subgroups of $G$ are non self-normalizing, we view this result as bounding the nilpotency class and the derived length of $G$ in terms of the number of conjugacy classes of proper nontrivial subgroups of $G$.  We would not be completely surprised if someone has studied this before, but we are not aware of any such research.

\begin{theorem} \label{nilpotent}
Let $p$ be a prime and $n>1$ an integer.  If $G$ is a $p$-group in ${\mathcal D}_n$, then $G$ has nilpotency class at most $n/2$ and the derived length of $G$ is at most ${\rm log}_2 (n/2) +1$.
\end{theorem}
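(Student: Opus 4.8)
The plan is to reduce the theorem to the single inequality
\[
\mathcal{D}(G)\ \ge\ 2\,\mathrm{cl}(G)
\]
for every noncyclic $p$-group $G$, where $\mathrm{cl}(G)$ denotes the nilpotency class. First note that, since every proper subgroup of a $p$-group is subnormal and hence not self-normalizing, $\mathcal{D}(G)$ is just the number of conjugacy classes of proper nontrivial subgroups of $G$; and if $G$ is cyclic then $G\cong C_{p^{n+1}}$ with $n>1$, so $\mathrm{cl}(G)=1\le n/2$ and the derived length is $1\le\log_2(n/2)+1$, and there is nothing to prove. So I assume $G$ noncyclic and set $c:=\mathrm{cl}(G)$; the claim $\mathcal{D}(G)\ge 2c$ I would prove by induction on $c$. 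For $c=1$ ($G$ abelian noncyclic) it is clear, since $G$ already has at least $p+1\ge 3\ge 2$ subgroups of order $p$.

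For $c\ge 2$, set $Z=Z(G)$. Then $G/Z$ is noncyclic of class $c-1$, so $\mathcal{D}(G/Z)\ge 2(c-1)$ by induction. Applying Lemma~\ref{rem} with $N=Z$ (central, so $\mathcal{D}_G(Z)=\mathcal{D}(Z)$) gives $\mathcal{D}(G)\ge \mathcal{D}(G/Z)+\mathcal{D}(Z)+1\ge 2c-1+\mathcal{D}(Z)$. If $|Z|\ge p^2$ then $\mathcal{D}(Z)\ge 1$ and we are done. If $|Z|=p$, I split into two cases. If $G$ has a nontrivial subgroup $K$ with $Z\not\le K$, then in the notation of the proof of Lemma~\ref{rem} the set $\mathcal{S}_0$ (classes of non-self-normalizing subgroups containing $Z$) omits the class of $K$, which yields the sharper estimate $\mathcal{D}(G/Z)\le\mathcal{D}(G)-2$ and hence $\mathcal{D}(G)\ge 2(c-1)+2=2c$. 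Otherwise every nontrivial subgroup of $G$ contains the unique subgroup $Z$ of order $p$, so $G$ is cyclic or generalized quaternion; being noncyclic, $G\cong Q_{2^{c+1}}$.

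I expect this last, generalized quaternion case to be the main obstacle (it is exactly where the induction above loses a factor), and I would handle it by a direct count. Let $A$ be the cyclic maximal subgroup of $Q_{2^{c+1}}$, of order $2^c$, so $A\trianglelefteq G$. The subgroups contained in $A$ are exactly the subgroups of the cyclic group $A$; the proper nontrivial ones of $G$ among them have orders $2^1,\dots,2^{c-1}$, and together with $A$ itself these give $c$ pairwise nonconjugate classes. For the remaining $c$ classes, observe that no subgroup not contained in $A$ is conjugate to one contained in $A$ (as $A\trianglelefteq G$), and that for $0\le j\le c-1$, if $A_j\le A$ has order $2^j$ and $b\in G\setminus A$, then $\langle A_j,b\rangle$ is a subgroup not contained in $A$ of order $2^2,\dots,2^{c-1}$ as $j$ varies; this gives $c-2$ classes. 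Finally $G$ has exactly three maximal subgroups, and the two different from $A$ contribute two further nonconjugate classes of order $2^c$ not contained in $A$. Altogether $\mathcal{D}(Q_{2^{c+1}})\ge c+(c-2)+2=2c$, completing the induction.

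With $\mathcal{D}(G)\ge 2c$ in hand, the bound $c\le n/2$ is immediate. For the derived length, $[\gamma_i(G),\gamma_j(G)]\le\gamma_{i+j}(G)$ gives $G^{(k)}\le\gamma_{2^k}(G)$, so the derived length is at most $\lceil\log_2(c+1)\rceil$; since $c$ is a positive integer, $\lceil\log_2(c+1)\rceil\le\log_2(2c)=\log_2 c+1\le\log_2(n/2)+1$ using $c\le n/2$. This last deduction is routine and I do not anticipate any difficulty with it.
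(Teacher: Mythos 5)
Your proposal is correct, and it follows the same broad strategy as the paper (reduce to counting conjugacy classes of proper nontrivial subgroups, induct via a central quotient, and isolate the generalized quaternion groups as the only obstruction), but the two arguments differ in substance. The paper inducts on $|G|$ and proves the stronger intermediate inequality $\mathcal{D}(G)\ge 2m$ for noncyclic $G$ of order $p^{m+1}$: it quotients by an arbitrary central subgroup of order $p$, lifts $2m-2$ classes from the quotient, and then needs two nonconjugate subgroups of order $p$, which exist unless $G$ is generalized quaternion; that case is settled by counting subgroups of the dihedral quotient. You instead induct on the nilpotency class $c$ and prove only $\mathcal{D}(G)\ge 2c$, quotienting by the full center; the extra unit you need comes either from $\mathcal{D}(Z(G))\ge 1$ when $|Z(G)|\ge p^2$, or from a genuine (and valid) sharpening of Lemma~\ref{rem} -- observing that a nontrivial subgroup avoiding a central $Z$ of order $p$ is counted in $m$ but lies outside $\mathcal{S}_0$ -- with the unique-minimal-subgroup case again forcing $G\cong Q_{2^{c+1}}$, which you count directly (and correctly: $c$ classes inside the cyclic maximal subgroup and $c$ outside). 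Both routes yield $c\le n/2$; the paper's intermediate bound is formally stronger since $c\le m$, while yours targets exactly the quantity the theorem needs and makes the induction parameter the class itself. Your closing deduction of the derived-length bound from $G^{(k)}\le\gamma_{2^k}(G)$ is the content of the paper's citation of \cite[Corollary 4.13]{I} and is fine.
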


\begin{proof}
Note that our assumption of $n$ being larger than $1$ implies that $G$ is not a cyclic group of order $p$ or $p^2$. Assume that $|G| = p^{m+1}$ for a positive integer $m$ and let $l$ be the nilpotency class of $G$.  We know that $l \le m$.  Note that every proper subgroup of $G$ is non self-normalizing.  Hence, $n$ is the number of conjugacy classes of nontrivial proper subgroups of $G$.  We claim that if $G$ is noncyclic, then it has at least $2m$ conjugacy classes of nontrivial proper subgroups, and we prove this by induction on the order of $G$. 
	
Let $Z$ be a central subgroup of $G$ having order $p$ and consider the factor group $G/Z$, which has order $p^m$. If $G/Z$ is cyclic then $G$ is abelian; moreover, taking an element $x$ of $G$ such that $xZ$ generates $G/Z$, the order of $x$ is a multiple of $p^m$. But since $G$ is noncyclic, the order of $x$ is in fact $p^m$ and we get $G=\langle x \rangle \times Z$. In this situation, it is not difficult to see that the number of nontrivial proper subgroups of $G$ is larger than $2m$, as desired. On the other hand, if $G/Z$ is noncyclic, then by induction it has at least $2m-2$ conjugacy classes of nontrivial proper subgroups; by the Correspondence Theorem (see Theorem 3.7 of \cite{I1}), these yield at least $2m-2$ conjugacy classes of proper subgroups of $G$ having order at least $p^2$. It is then enough to find two distinct conjugacy classes of subgroups of order $p$, and this can be done unless $G$ is a generalized quaternion group. But if this is the case, then $G/Z$ is a dihedral group of order $p^m$, so it has $3m-3$ conjugacy classes of nontrivial proper subgroups. Including $Z$ in the count, we have a total of $3m-2$ conjugacy classes of nontrivial proper subgroups of $G$, and since $3m-2\geq 2m$ our claim is proved.
	
This yields $n \ge 2m$, i.e., $n/2 \ge m \ge l$, and we are done in this case. Finally, our conclusion is straightforward if $G$ is cyclic (of order at least $p^3$).  The bound on the derived length follows from \cite[Corollary 4.13]{I}.
\end{proof} 

Before we turn to general nilpotent groups, we next consider the count of the number of non self-normalizing subgroups of a direct product.  Note that equality occurs only when both groups are nilpotent and have coprime orders.

\begin{lemma}\label{product}
Let $G = H \times K$ where $H$ and $K$ are both nontrivial.  Then we have $$\mathcal{D}(G) \ge (\mathcal{D}(H) + 2)(\mathcal{D}(K) + 2) - 2,$$  where equality occurs if and only if $H$ and $K$ have coprime orders and they are both nilpotent.
\end{lemma}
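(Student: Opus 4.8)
The plan is to lower-bound $\mathcal{D}(G)$ by counting only those non self-normalizing nontrivial subgroups of $G$ that have \emph{product form} $A \times B$ with $A \le H$ and $B \le K$, and then to show that this partial count already equals $(\mathcal{D}(H)+2)(\mathcal{D}(K)+2)-2$ exactly when $H$ and $K$ are both nilpotent, while the full count strictly exceeds it as soon as $|H|$ and $|K|$ share a prime.

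First I would record the two elementary facts about product subgroups. For $A \le H$ and $B \le K$ we have $\norm G{A \times B} = \norm H A \times \norm K B$; hence $A \times B$ is non self-normalizing in $G$ if and only if $A$ is non self-normalizing in $H$ \emph{or} $B$ is non self-normalizing in $K$. Moreover $A \times B$ and $A' \times B'$ are $G$-conjugate if and only if $A \sim_H A'$ and $B \sim_K B'$, so the conjugacy classes of product subgroups of $G$ correspond bijectively to pairs of conjugacy classes $([A],[B])$. I introduce the bookkeeping quantities $s(L)$, the number of conjugacy classes of self-normalizing subgroups of a group $L$, and $t(L) := \mathcal{D}(L)+1$, the number of conjugacy classes of non self-normalizing subgroups of $L$ (the extra $1$ accounting for the trivial subgroup, which is non self-normalizing since $L \ne 1$). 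Note $s(L) \ge 1$ always, with $s(L) = 1$ precisely when every proper subgroup of $L$ is non self-normalizing, i.e.\ when $L$ is nilpotent, by \cite[Theorem 1.26]{I}; also $t(L) \ge 1$.

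Next I would count. The conjugacy classes of subgroups of $G$ of product form number $(s(H)+t(H))(s(K)+t(K))$; among them exactly $s(H)s(K)$ are self-normalizing, and exactly one (the class of the trivial subgroup) is trivial. Therefore $G$ has at least
$$(s(H)+t(H))(s(K)+t(K)) - s(H)s(K) - 1 = s(H)t(K) + t(H)s(K) + t(H)t(K) - 1$$
conjugacy classes of non self-normalizing nontrivial subgroups, so this number is a lower bound for $\mathcal{D}(G)$. Since $(\mathcal{D}(H)+2)(\mathcal{D}(K)+2)-2 = (t(H)+1)(t(K)+1)-2 = t(H)t(K)+t(H)+t(K)-1$, the difference between these two expressions is $(s(H)-1)t(K) + (s(K)-1)t(H) \ge 0$, which proves the inequality; equality in this partial count forces $s(H) = s(K) = 1$, i.e.\ both $H$ and $K$ nilpotent.

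Finally, the equality characterization. If $H$ and $K$ are nilpotent of coprime orders, then $G$ is nilpotent and every subgroup of $G$ is a product $A \times B$ (a standard consequence of coprimality: given $(x,y)$ in a subgroup $L$, a suitable power of it equals $(x,1)$, so $L = (L \cap H) \times (L \cap K)$); as $G$ is nilpotent, $\mathcal{D}(G)$ is the number of conjugacy classes of proper nontrivial subgroups of $G$, which with $s(H) = s(K) = 1$ is exactly $(1+t(H))(1+t(K)) - 2$, the claimed value. Conversely, if equality holds then both inequalities above are equalities: the second gives that $H$ and $K$ (hence $G$) are nilpotent, and the first gives that every non self-normalizing nontrivial subgroup of $G$ is conjugate to a product subgroup. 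If some prime $p$ divided both $|H|$ and $|K|$, I would choose central subgroups $A \cong C_p \le H$ and $B \cong C_p \le K$ and an isomorphism $\varphi\colon A \to B$; the diagonal $D = \{(a,\varphi(a)) : a \in A\}$ is a nontrivial proper subgroup of the nilpotent group $G$, hence non self-normalizing, but it is not of product form and no $G$-conjugate of it is — contradicting that every such subgroup is conjugate to a product subgroup. Hence $|H|$ and $|K|$ are coprime. I expect the equality analysis, and in particular producing this diagonal subgroup as the witness that a nilpotent configuration with non-coprime orders strictly exceeds the product count, to be the main obstacle; the inequality itself is just the counting identity above together with $s(H), s(K) \ge 1$.
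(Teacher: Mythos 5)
Your proof is correct and follows essentially the same route as the paper's: it lower-bounds $\mathcal{D}(G)$ by counting conjugacy classes of product subgroups $A\times B$ (using $\norm G{A\times B}=\norm H A\times\norm K B$ and the bijection between classes of products and pairs of classes), uses a diagonal central subgroup of order $p$ to show strict inequality when $|H|$ and $|K|$ share a prime, and uses the coprime decomposition of an arbitrary subgroup into its $\pi$- and $\pi'$-parts for the equality case. Your $s$/$t$ bookkeeping is a slightly sharper packaging of the paper's count: the surplus $(s(H)-1)t(K)+(s(K)-1)t(H)$ absorbs in one identity the paper's separate observation that a proper self-normalizing $Y\le K$ yields the uncounted class of $1_H\times Y$.
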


\begin{proof}
Let $X_1$ and $X_2$ be subgroups of $H$ and $Y_1$ and $Y_2$ be subgroups of $K$.  Then $X_1 \times Y_1$ is conjugate to $X_2 \times Y_2$ if and only if $X_1$ is conjugate to $X_2$ in $H$ and $Y_1$ is conjugate to $Y_2$ in $K$.  Notice that if $X$ is either $1_H$, $H$, or a proper, nontrivial, non self-normalizing subgroup of $H$, and $Y$ is either $1_K$, $K$, or a proper, nontrivial, non self-normalizing subgroup of $K$, then $X \times Y$ will be a proper, nontrivial, non self-normalizing subgroup of $G$ so long as either $(X,Y)$ is not $(1_H,1_K)$ or $(H,K)$.  This gives the inequality.

Suppose that $K$ is not nilpotent.  Then we can find 
a subgroup $Y$ of $K$ that is proper and self-normalizing.  Now, $1_H \times Y$ is nontrivial and non self-normalizing in $H\times K$, but is not counted in the previous paragraph. Hence, equality will not hold.  Also, assume that $H$ and $K$ are nilpotent but there exists a prime $p$ which divides both $|H|$ and $|K|$; then we can take $h\in\zent H$ and $k\in\zent K$ both having order $p$. The subgroup $\langle hk\rangle$ of $H\times K$ is nontrivial and non self-normalizing, but it is not counted in the previous paragraph, so again equality does not hold. On the other hand, let $H$ and $K$ be nilpotent of coprime orders, and let $\pi$ be the set of prime divisors of $|H|$. For every subgroup $U$ of $H\times K$ we have $U=\oh{\pi}U\times\oh{\pi'}U$, where $\oh{\pi}U$ is contained in $H$ and $\oh{\pi'}U$ in $K$. Therefore $U$ is of the form considered in the count in the previous paragraph and it is easily seen that equality holds. 
\end{proof}

We can use the previous two results to treat nilpotent groups in general. Note, if $G = H_1 \times \cdots \times H_k$, then, by Lemma \ref{product} and induction, ${\mathcal D}(G) \geq ({\mathcal D}(H_1) +2) \cdots ({\mathcal D}(H_k) + 2) - 2$ and equality will hold when the $H_i$'s are nilpotent and the various $H_i$'s have pairwise coprime orders.   Just as in the $p$-group case, we are obtaining a bound on the nilpotency class and derived length in terms of the number of conjugacy classes of proper nontrivial subgroups.

\begin{theorem}
Suppose $G$ is a noncyclic nilpotent group with order divisible by exactly $k$ distinct primes. Suppose $G$ lies in ${\mathcal D}_n$. Then $G$ has nilpotency class at most $(n+2-2^k)/2^k$. It follows that the derived length of $G$ is at most ${\rm log}_2((n+2-2^k)/2^k) + 1$.
\end{theorem}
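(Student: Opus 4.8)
The plan is to reduce everything to the $p$-group case established in Theorem~\ref{nilpotent}, via the direct-product count of Lemma~\ref{product}. Write $G=P_1\times\cdots\times P_k$, the internal direct product of its (necessarily normal) Sylow subgroups, one for each of the $k$ primes dividing $|G|$, and put $n_i=\mathcal{D}(P_i)$. Since the $P_i$ are nilpotent of pairwise coprime orders, applying Lemma~\ref{product} repeatedly yields the \emph{equality} $n=\mathcal{D}(G)=\prod_{i=1}^k(n_i+2)-2$, i.e.\ $n+2=\prod_{i=1}^k(n_i+2)$; dividing by $2^k$ this reads
\[
\frac{n+2-2^k}{2^k}=\prod_{i=1}^k\Bigl(\frac{n_i}{2}+1\Bigr)-1 .
\]
I would then use the standard fact that for a direct product of nilpotent groups of pairwise coprime orders both the lower central series and the derived series split coordinatewise, so that the nilpotency class of $G$ equals $\max_i\mathrm{cl}(P_i)$ and the derived length of $G$ equals $\max_i\mathrm{dl}(P_i)$.

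The heart of the argument is to bound $\mathrm{cl}(G)=\max_i\mathrm{cl}(P_i)$. The key point is that a noncyclic $p$-group never lies in $\mathcal{D}_0$ or $\mathcal{D}_1$ (by Propositions~\ref{D_0} and \ref{D_1} the members of those families are cyclic of order $p$, cyclic of order $p^2$, or nonabelian of order $pq$), so every noncyclic $P_i$ satisfies $n_i>1$, hence $n_i\ge 2$, and Theorem~\ref{nilpotent} applies and gives $\mathrm{cl}(P_i)\le n_i/2$. Next I would locate an index $\ell$ with $P_\ell$ noncyclic and $\mathrm{cl}(G)\le n_\ell/2$: if $\mathrm{cl}(G)\ge 2$ the maximum class is attained at some $P_\ell$ that is nonabelian, in particular noncyclic, so $\mathrm{cl}(G)=\mathrm{cl}(P_\ell)\le n_\ell/2$; if instead $\mathrm{cl}(G)=1$, pick any noncyclic $P_\ell$ (one exists since $G$ is noncyclic) and note $\mathrm{cl}(G)=1\le n_\ell/2$ because $n_\ell\ge 2$. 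Since each factor $\tfrac{n_i}{2}+1$ is at least $1$, this gives
\[
\mathrm{cl}(G)\le\frac{n_\ell}{2}=\Bigl(\frac{n_\ell}{2}+1\Bigr)-1\le\prod_{i=1}^k\Bigl(\frac{n_i}{2}+1\Bigr)-1=\frac{n+2-2^k}{2^k},
\]
which is the first assertion.

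For the derived length, I would combine $\mathrm{dl}(G)=\max_i\mathrm{dl}(P_i)$ with the $p$-group bound $\mathrm{dl}(P_i)\le\log_2(\mathrm{cl}(P_i))+1$ from \cite[Corollary 4.13]{I}, together with $\mathrm{cl}(P_i)\le\mathrm{cl}(G)\le(n+2-2^k)/2^k$ from the previous step; monotonicity of $\log_2$ then yields $\mathrm{dl}(G)\le\log_2\bigl((n+2-2^k)/2^k\bigr)+1$. (The first paragraph also shows $\prod_i(\tfrac{n_i}{2}+1)\ge\tfrac{n_\ell}{2}+1\ge 2$, so the quantity inside the logarithm is at least $1$ and everything is well defined.) Essentially all of this is bookkeeping; the step I would be most careful about is the degenerate abelian case $\mathrm{cl}(G)=1$ — making sure there is still a noncyclic Sylow subgroup $P_\ell$ with $n_\ell$ large enough — and, more generally, checking that the maximum of the $\mathrm{cl}(P_i)$ is realized at an index where the hypothesis $n_i>1$ of Theorem~\ref{nilpotent} holds.
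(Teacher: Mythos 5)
Your argument is correct and follows essentially the same route as the paper: decompose $G$ into its Sylow subgroups, use Lemma~\ref{product} to bound $\mathcal{D}(P_i)$ in terms of $n$ and $k$, invoke Theorem~\ref{nilpotent} on the noncyclic factors (noting that at least one $P_i$ is noncyclic so the quantity $(n+2-2^k)/2^k$ is at least $1$, which covers the abelian/cyclic factors), and finish with \cite[Corollary 4.13]{I}. The only cosmetic difference is that you use the equality case of Lemma~\ref{product} and factor $(n+2)/2^k$ as $\prod_i(n_i/2+1)$, whereas the paper works with the inequality $n\ge(\mathcal{D}(P_i)+2)2^{k-1}-2$; the resulting bound is identical.
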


\begin{proof} 
Write $G$ as the direct product of its Sylow subgroups, $G = P_1 \times \cdots \times P_k$ (say, $P_i \in {\rm{Syl}}_{p_i} (G)$). By Lemma \ref{product}, it follows
that  
$$n={\mathcal D}(G) \geq ({\mathcal D}(P_1) +2) \cdots ({\mathcal D}(P_k)+ 2) - 2 \geq ({\mathcal D}(P_i) +2)2^{k-1} -2$$ 
for each $1 \leq i \leq k$. Thus, $\displaystyle {\mathcal D} (P_i) + 2 \leq \frac {n +2}{2^{k-1}},$ for each $i$, and so, 
$$\displaystyle {\mathcal D} (P_i) \leq \frac {n +2}{2^{k-1}} - 2 = \frac {n + 2}{2^{k-1}} - \frac {2 \cdot 2^{k-1}}{2^{k-1}}.$$
This yields $\displaystyle {\mathcal D}(P_i) \leq \frac{n+2 - 2^{k}}{2^{k-1}}$. Observe that we have $2\leq {\mathcal{D}}(P_i)$ for at least one $i\in\{1,\dots,k\}$; in fact, assuming the contrary, it is easily seen that all the $P_i$ are cyclic (of order $p_i$ or $p_i^2$) so $G$ is cyclic, not our case.  As a consequence, we get  $2\leq (n+2 - 2^{k})/2^{k-1}$. Now, if ${\mathcal{D}}(P_j)\leq 1$, then the nilpotency class of ${\mathcal{D}}(P_j)$ is $1\leq (n+2 - 2^{k})/2^{k}$; but, by Theorem~\ref{nilpotent}, the same inequality holds if ${\mathcal{D}}(P_j)>1$ as well. The last claim of the statement also follows from \cite[Corollary 4.13]{I}.
\end{proof}

\section{Frobenius Groups} \label{frob sect}

In this section, we count the number of non self-normalizing subgroups for certain families of Frobenius groups. We begin by examining Frobenius groups where both the Frobenius kernel and the Frobenius complement have orders that are prime powers. Recall that if the cyclic Frobenius kernel has order $p^n$ and the cyclic Frobenius complement has odd order $q^m$, where $p$ and $q$ are distinct primes, then $q^m$ must divide $p - 1$.

\begin{lemma} \label{Frob 1}
Let $G$ be a Frobenius group with a cyclic kernel $N$ of order $p^n$ and a cyclic complement $C$ of order $q^m$, where $p$ and $q$ are prime numbers. Then $G \in \mathcal{D}_{(n+1)m - 1}$.
\end{lemma}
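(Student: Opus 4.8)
The plan is to enumerate all subgroups of $G$ up to conjugacy, decide which are self-normalizing, and count the rest. Since $N$ is cyclic of order $p^n$ it is normal in $G$, so every subgroup of $N$ is normal in $G$; these give $n$ nontrivial proper subgroups (of orders $p, p^2, \dots, p^n$), all of them non self-normalizing, contributing $n$ to the count. Next I would handle subgroups $H$ with $H \cap N = 1$: by coprimality and the conjugacy of complements in a Frobenius group, each such $H$ is conjugate into $C$, so these are exactly (up to conjugacy) the subgroups of the cyclic group $C$, namely one of each order $q^i$ for $0 \le i \le m$. The trivial one is excluded; the subgroup $C$ itself of order $q^m$ is self-normalizing because $C = \norm{G}{C}$ in a Frobenius group (a Frobenius complement is self-normalizing). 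The intermediate ones $1 \ne C_i < C$ of order $q^i$ with $i < m$: I claim $\norm{G}{C_i} = C$, hence they are non self-normalizing; this contributes $m-1$ more (the orders $q^1, \dots, q^{m-1}$).

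The bulk of the argument is the remaining subgroups $H$ with $1 \ne H \cap N < H$. Here $H \cap N$ is a nontrivial subgroup of $N$, hence normal in $G$ and in particular normalized by $H$; writing $D = H \cap N$ of order $p^j$ ($1 \le j \le n$) and noting $H/D$ embeds in a conjugate of $C$, one sees $H$ is itself a Frobenius group with kernel $D$ and a cyclic complement of order $q^i$ for some $1 \le i \le m$ (the case $i=0$ would force $H = D \le N$, already counted). The key structural point to nail down is that for each such pair $(j,i)$ there is exactly one conjugacy class of such subgroups: since all Frobenius complements of $G$ are $N$-conjugate, one can conjugate $H$ so that its complement lies in $C$, and then $H = D \rtimes C_i$ is determined by $j$ and $i$ because $N$ is cyclic (so $D$ is the unique subgroup of order $p^j$) and $C$ is cyclic (so $C_i$ is the unique subgroup of order $q^i$); moreover $C_i$ acts faithfully on $D$ since the action of $C$ on $N$ is fixed-point-free. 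Each such $H$ is non self-normalizing: $\norm{G}{H} \supseteq \norm{G}{D} \cap (\text{something})$ — more cleanly, $H < D \rtimes C = NC \cap (\text{normalizer})$; in fact $H$ is properly contained in $DC$ when $j < n$, or in $N C_i$-type overgroups, and one checks $\norm{G}{H} > H$ always. This gives $n \cdot m$ subgroups (pairs $(j,i)$ with $1 \le j \le n$, $1 \le i \le m$), all non self-normalizing. Finally I would also treat $H = NC_i$ for $1 \le i < m$ and $H = N$ itself: $N$ is normal hence counted above among subgroups of $N$? No — $N$ of order $p^n$ is the top one already counted. The groups $N C_i$ with $i < m$ have normalizer containing $N C_{i+1} > N C_i$, so they are non self-normalizing, adding another $m-1$; and $NC = G$ is not proper. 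Wait: I must be careful not to double count — $NC_i = N \rtimes C_i$ overlaps the $(j,i)$ family only when $j = n$, so the $j=n$ row already includes these; thus no separate contribution.

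Summing: $n$ (subgroups of $N$, but the top one $N$ is the $j=n$, $i=0$ case which I should list with the $N$-subgroups) plus $m-1$ (proper nontrivial subgroups of a complement) plus $nm$ (proper mixed Frobenius subgroups $D \rtimes C_i$ with $1\le j\le n$, $1 \le i \le m$) gives $n + (m-1) + nm = nm + n + m - 1 = (n+1)(m+1) - 2 = (n+1)m - 1 + (n+1) - (n+1) $; let me recompute: $(n+1)m - 1 = nm + m - 1$, whereas my sum is $nm + n + m - 1$, which differs by $n$ — so I have overcounted by $n$, meaning the mixed family should be indexed $1 \le j \le n$, $1 \le i \le m$ but with the row $i = m$ (where the complement is all of $C$) giving self-normalizing subgroups $D \rtimes C$? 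No: $D \rtimes C$ with $D < N$ has normalizer $N \rtimes C = G$... Actually $\norm{G}{D\rtimes C} \supseteq C$ and $\supseteq \norm{N}{D} = N$, forcing it to be $G$, so $D \rtimes C$ for $D < N$ is non self-normalizing — fine. The reconciliation must instead be: the subgroups of $N$ are the $i=0$ column of the mixed family, so I should count pairs $(j,i)$ with $0 \le i \le m$, $1 \le j \le n$, minus the single self-normalizing one (the conjugates of $C$, i.e. $j$ ``='' $0$ which isn't in range). The honest bookkeeping: nontrivial subgroups up to conjugacy are parametrized by $(j,i)$ with $0 \le j \le n$, $0 \le i \le m$, $(j,i) \ne (0,0)$, and such a subgroup is self-normalizing exactly when $j = 0$ and $i = m$ (conjugates of $C$) or when $(j,i) = (n,m)$ (the whole group $G$, which is not counted as it's not proper). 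So $\mathcal{D}(G) = (n+1)(m+1) - 1 - 1 - 1 = (n+1)(m+1) - 3$? That's $nm + n + m - 2$, still not matching. The discrepancy tells me the real content is that \emph{many} of the mixed subgroups are in fact self-normalizing — specifically $D \rtimes C_i$ is self-normalizing whenever $i = m$, i.e. the whole column $i = m$, $1 \le j \le n-1$, consists of self-normalizing subgroups (normalizer of $D \rtimes C$ should be itself, not $G$, because an element normalizing it and lying in $N$ would have to normalize $C$, hence lie in $C \cap N = 1$). \textbf{This is the main obstacle}: correctly determining, for each pair $(j,i)$, whether $D \rtimes C_i$ is self-normalizing, using that $\norm{G}{D \rtimes C_i}$ meets $N$ in $\norm{N}{D \rtimes C_i} = \norm{N}{C_i}$-translates — one must show $\norm{N}{D\rtimes C_i} = D$ exactly when $i \ge 1$ (fixed-point-freeness) so the normalizer is $D \rtimes \norm{C}{C_i} = D \rtimes C$, which equals $D \rtimes C_i$ iff $i = m$. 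Hence self-normalizing mixed subgroups are exactly those with $i = m$, $1 \le j \le n$, but $(n,m)$ is $G$ itself. That removes $n - 1$ from the naive count, and also the $j=0, i=m$ class (conjugates of $C$) is self-normalizing; total self-normalizing proper nontrivial classes: $(n-1) + 1 = n$, wait we also shouldn't have counted... Final count: $(n+1)(m+1) - 1$ nontrivial classes, minus $1$ for $G$ itself being counted as $(n,m)$ — no, $(n,m)$ IS $G$ and not proper, so proper nontrivial classes $= (n+1)(m+1) - 2$; among these the self-normalizing ones are the $i = m$ column with $1 \le j \le n-1$ (that's $n-1$ classes) together with $j = 0, i = m$ (that's $1$ class), total $n$; so $\mathcal{D}(G) = (n+1)(m+1) - 2 - n = nm + n + m - 1 - n = nm + m - 1 = (n+1)m - 1$. \textbf{This matches.} So the proof reduces to: (1) classify subgroups up to conjugacy via the Frobenius structure and cyclicity, getting the grid $(j,i)$; (2) identify the self-normalizing ones as precisely the column $i = m$ (with $(n,m) = G$ excluded as improper); (3) arithmetic. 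The crux is step (2), carried out via fixed-point-freeness of the $C$-action on $N$ and the self-normalizing property of Frobenius complements.
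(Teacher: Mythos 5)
Your final argument is correct and is essentially the paper's proof: both parametrize the conjugacy classes of subgroups by pairs $(M,B)$ with $M\le N$ and $B\le C$ (using that every subgroup is $(H\cap N)X$ with $X$ conjugate into $C$), identify the self-normalizing ones as exactly those containing a full conjugate of $C$, and count $(n+1)m-1$. The mid-proof misstep (briefly declaring $D\rtimes C$ non self-normalizing for $D<N$) is caught and corrected, so the final bookkeeping matches the paper's.
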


\begin{proof}


Let $M$ be a subgroup of $N$ and let $B$ be a proper subgroup of $C$. Since $M$ is normal in $G$, we have that $MB$ and $MC$ are subgroups of $G$, with $MB$ being normal and proper in $MC$. Thus, $MB$ is non self-normalizing. However, since $C$ is self-normalizing, we conclude that $MC$ is self-normalizing via the Correspondence Theorem (again Theorem 3.7 of \cite{I1}).  (In fact, $G/M$ will be a Frobenius group and $MC/M$ will be its Frobenius complement, and thus, $MC$ is self-normalizing.)

On the other hand, suppose $H$ is a subgroup of $G$. Then, $H \cap N$ is normal in $G$ and $G/(H \cap N)$ is Frobenius group.  We see that $H/(H \cap N)$ is conjugate to some subgroup of $C (H \cap N)/(H \cap N)$.  It follows that we can express $H$ as $H = (H \cap N)X$, where $X$ is conjugate to a subgroup of $C$. Furthermore, we see that $H$ will be self-normalizing precisely when $X$ is conjugate to $C$, and thus, $H$ is non self-normalizing whenever $X$ is conjugate to a proper subgroup of $C$.

Next, note that $N$ has $n +1$ subgroups including itself and the trivial subgroup and $C$ has $m$ proper subgroups.  Since $MB$ is nontrivial exactly when $M > 1$ or $B > 1$, we conclude that there are $(n+1)m - 1$ conjugacy classes of non self-normalizing subgroups in $G$.
\end{proof}

We now consider Frobenius groups where the Frobenius kernel is an elementary abelian $p$-group of order $p^2$ for some prime $p$ and a Frobenius complement has prime order $q$.

\begin{lemma} \label{Frob 2}
Let $G$ be a Frobenius group whose kernel $N$ is an elementary abelian $p$-group of order $p^2$ where $p$ is a prime number, and a complement $C$ that has prime order $q$. Then one of the following conditions holds:
\begin{enumerate}
\item If $q$ does not divide $p-1$, then $G \in {\mathcal D}_{\frac{p+1}{q}+1}$. 
\item If $q$ divides $p-1$, then either $G\in {\mathcal D}_{p+2}$ (which always happens for $q=2$) or $G \in {\mathcal D}_{\frac{p-1}{q}+3}$.
\end{enumerate}
\end{lemma}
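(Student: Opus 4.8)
The plan is to mimic the counting argument of Lemma \ref{Frob 1}: every subgroup $H$ of $G$ decomposes as $H=(H\cap N)X$ with $X$ conjugate to a subgroup of $C$, and $H$ is self-normalizing exactly when $X$ is conjugate to $C$; so the count of non self-normalizing conjugacy classes splits into (i) the classes with $X=1$, i.e. the nontrivial proper subgroups of $N$ up to $G$-conjugacy, plus (ii) the classes of the form $MC^g$ with $M\le N$ proper and $C$-invariant (equivalently, the proper $G$-invariant, i.e. $C$-invariant, subgroups $M$ of $N$), which are automatically normal hence non self-normalizing, plus (iii) the class of $N$ itself. The subgroups $M$ of type (ii) with $M\ne 1,N$ are exactly the $1$-dimensional $\F_p C$-submodules of $N$; write $t$ for their number. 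Since $C$ acts as a fixed-point-free automorphism of order $q$ on $N\cong\F_p^2$, standard module theory over $\F_p$ says: if $q\nmid p-1$ then $N$ is an irreducible $\F_p C$-module and $t=0$; if $q\mid p-1$ then $N$ is a sum of two $1$-dimensional modules, and either these two characters are swapped by nothing (they are distinct and $C$-stable) giving $t=2$, or — this is the $q=2$ case and also the case where the two eigenvalues coincide up to... — one must be careful: the two eigenspaces are always $C$-invariant when they are distinct, so $t=2$ when the action is diagonalizable with distinct eigenvalues, while if $q=2$ the only fixed-point-free order-$2$ action is $-1$, which is scalar, so $\emph{every}$ line is invariant and $t=p+1$. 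So the two bullets correspond to: $q\nmid p-1$ (then $t=0$), versus $q\mid p-1$ with $q>2$ and non-scalar action (then $t=2$), versus $q=2$ or scalar action (then $t=p+1$).

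Next I would count the $G$-conjugacy classes of the $p+1$ lines of $N$ that are \emph{not} $C$-invariant. The group $C$ (of prime order $q$) permutes these; a non-invariant line has an orbit of size $q$, and since $N$ is abelian these lines are pairwise non-conjugate in $G$ except for this $C$-action. Hence the non-invariant lines contribute $(p+1-t)/q$ conjugacy classes. Collecting: the number of non self-normalizing classes is
\[
\mathcal D(G)=\underbrace{\frac{p+1-t}{q}}_{\text{non-invariant lines}}+\underbrace{t}_{\text{invariant lines }M,\ 1\ne M\ne N}+\underbrace{1}_{N}.
\]
Plugging in $t=0$ gives $\mathcal D(G)=\frac{p+1}{q}+1$, which is case (1). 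Plugging in $t=2$ gives $\frac{p+1-2}{q}+2+1=\frac{p-1}{q}+3$, the second alternative of case (2). Plugging in $t=p+1$ gives $0+(p+1)+1=p+2$, the first alternative of case (2); and since $q=2$ forces scalar action, this is exactly the case that "always happens for $q=2$". One subtlety to address: when $t\ge 1$, a line $M$ of type (ii) could a priori coincide, as an abstract subgroup class, with one counted in type (iii) or with a non-invariant line — but $M$ is proper and nontrivial and $C$-invariant, while non-invariant lines are not, and $N$ has order $p^2$, so there is no overlap; likewise the subgroups $MC^g$ are never equal to a subgroup of $N$. I should also check that when $t=2$ the two invariant lines are genuinely non-conjugate in $G$ (true: $N\nor G$ is abelian, so $G$-conjugacy on subgroups of $N$ is induced by $C$, and each is $C$-fixed), and that they are distinct (true, since the eigenvalues are distinct).

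The main obstacle is the representation-theoretic case analysis pinning down $t$: one must argue cleanly that a fixed-point-free automorphism of order $q$ on $\F_p^2$ is either irreducible (when $q\nmid p-1$), or diagonalizable with two distinct nonzero eigenvalues that are primitive-ish $q$-th roots of unity (the generic $q\mid p-1$, $q>2$ case, $t=2$), or scalar equal to a primitive $q$-th root of unity — and that scalar case is forced precisely when $q=2$ (eigenvalue $-1$) but can also occur for larger $q$ dividing $p-1$, giving $t=p+1$. Fixed-point-freeness is what rules out eigenvalue $1$ and the non-semisimple (single Jordan block) possibility, so that needs to be invoked explicitly. Everything else is the same bookkeeping as in Lemma \ref{Frob 1}, using that $G/M$ is again a Frobenius group for $M$ a proper $C$-invariant subgroup of $N$, hence $MC/M$ is self-normalizing in $G/M$ and $MC$ is self-normalizing in $G$.
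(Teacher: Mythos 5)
Your argument is essentially the paper's: reduce to counting the $C$-orbits on the $p+1$ subgroups of order $p$ of $N$ (plus the class of $N$ itself), and split according to whether $N$ is an irreducible module for $C$ over the field of $p$ elements, a sum of two non-isomorphic one-dimensional submodules, or homogeneous (scalar action), giving $t=0$, $t=2$, $t=p+1$ respectively; your formula $\frac{p+1-t}{q}+t+1$ and the observation that $q=2$ forces the scalar case are all correct and match the paper's three conclusions. The one slip is item (ii) of your opening paragraph: the subgroups $MC^{g}$ with $M$ a proper $C$-invariant subgroup of $N$ are \emph{not} normal and \emph{not} non-self-normalizing --- they contain a Frobenius complement and hence equal their own normalizers, exactly as you yourself state in your closing paragraph --- so that clause should be deleted, and the classes you actually intend to count there are the invariant lines $M$ themselves, which are already subsumed in your item (i). Since your displayed count only ever tallies subgroups of $N$, this inconsistency does not propagate into the final result.
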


\begin{proof}
We claim that all the non self-normalizing subgroups of $G$ lie in $N$.  Since $C$ is a Frobenius complement, we see that $C = \norm G C$, and observe that $C$ is a Sylow $q$-subgroup of $G$.  If $H$ is a subgroup of $G$ not contained in $N$, then $q$ divides $|H|$, and so, some conjugate of $C$ lies in $H$.  Without loss of generality, $C \le H$ and so, $\norm G C \le H$.  We have seen that this implies that $\norm G H = H$.  Thus, $N$ contains every non self-normalizing subgroup of $G$.
	
Thus, the only nontrivial non self-normalizing subgroups of $G$ are the subgroups of order $p$, whose count is $p+1$, along with $N$. It follows that $G \in \mathcal{D}_{m+1}$, where $m$ is the count of orbits of the action of $C$ on the set of subgroups of order $p$.

First, suppose that $q$ does not divide $p-1$; then $N$ is a minimal normal subgroup of $G$ and all of the subgroups of order $p$ lie in orbits of size $q$. Hence, we have $m = \frac {p+1}{q}$, and we obtain conclusion (1). 

On the other hand, suppose that $q$ divides $p-1$. In this case, $N$ is not a minimal normal subgroup of $G$ and, by Maschke's Theorem (Theorem 15.2 of \cite{I1}), we see that $N = N_1 \times N_2$, where the direct factors are suitable normal subgroups of $G$ having order $p$. Regarding $N_1$ and $N_2$ as (irreducible) ${\rm {GF}}(p)[C]$-modules, it is not difficult to see that every nontrivial subgroup of $N$ is normal in $G$ if and only if $N_1$ and $N_2$ are isomorphic as modules (which always happens if $q=2$).  In this case, all $p+1$ subgroups of order $p$ are normal in $G$ and those groups along with $N$ give us $p+2$ conjugacy classes of non self-normalizing subgroups and we have that $G \in {\mathcal D}_{p+2}$.  Note that when $p=2$, we know that the Frobenius action of $C \cong Z_2$ on $Z_p$ is inverting every element, so there is only one irreducible module for $Z_2$. In the final case, where $q$ divides $p-1$ and $N_1$ and $N_2$ are not isomorphic as modules, then only $N_1$ and $N_2$ are normal in $G$ among the subgroups of order $p$.  We see that the remaining $p-1$ subgroups of order $p$ lie in orbits of size $q$.  Recalling that we also have $N$, we see that we have $\displaystyle \frac {p-1}q + 3$ conjugacy classes of self-normalizing subgroups, and so, $G \in {\mathcal D}_{\frac{p-1}{q}+3}$.  Therefore, we obtain conclusion (2). 
\end{proof}

Next, we consider a similar situation but assuming that the Frobenius kernel has order $p^3$. 

\begin{lemma} \label{Frob 3}
Let $G$ be a Frobenius group whose kernel $N$ is an elementary abelian $p$-group of order $p^3$ where $p$ is a prime number, and a complement $C$ has prime order $q$. Then one of the following conditions holds:
\begin{enumerate}
\item If $q$ does not divide $p-1$, then $G\in {\mathcal D}_n$ where $\displaystyle n = {\frac{2(p^2+p+1)}{q}+1}$. 
\item If $q$ divides $p-1$, then $G\in {\mathcal D}_n$ where $n$ is one of $2p^2+2p+3$ (which always happens for $q=2$), $\displaystyle \frac{2(p^2-1)}{q}+2p+5$, or $\displaystyle \frac{2(p^2+p-2)}{q}+7$.
\end{enumerate}
\end{lemma}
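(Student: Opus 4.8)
The plan is to imitate the proof of Lemma~\ref{Frob 2}. The first step is to show that every nontrivial non self-normalizing subgroup of $G$ is contained in $N$: since $C$ is a Frobenius complement, $C=\norm G C$, and $C$ is a Sylow $q$-subgroup of $G$ (as $|G|=p^3q$); hence a subgroup $H$ not contained in $N$ must contain a conjugate of $C$, so up to conjugacy it contains $\norm G C$ and is therefore self-normalizing. Consequently $\mathcal{D}(G)=1+a+b$, where the summand $1$ counts $N$ itself, and $a$, $b$ denote the numbers of $C$-orbits on the sets of $1$- and of $2$-dimensional $\GF p$-subspaces of $N$ (each set having $p^2+p+1$ elements, and $G$-conjugacy of subgroups of the abelian normal subgroup $N$ coinciding with $C$-conjugacy). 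Since $(|N|,|C|)=1$ we have $q\neq p$, so $N$ is a semisimple $\GF p[C]$-module, and it has no trivial constituent because $C$ acts fixed-point-freely.

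The core of the argument is then a module-theoretic case distinction. If $q\nmid p-1$, every nontrivial irreducible $\GF p[C]$-module has dimension $d=\mathrm{ord}_q(p)>1$; since $N$ is a nonzero direct sum of such modules, $d\mid 3$, forcing $d=3$ and $N$ irreducible. Then $N$ has no proper nonzero $C$-invariant subspace, so all $C$-orbits on lines and on planes have size $q$, yielding $a=b=(p^2+p+1)/q$ and hence conclusion~(1). If $q\mid p-1$, then $\GF p[C]$ has exactly $q-1$ pairwise non-isomorphic nontrivial irreducibles, all $1$-dimensional, and $N=L_1\oplus L_2\oplus L_3$ with each $L_i$ affording a nontrivial character. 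I would split according to the number $t\in\{1,2,3\}$ of distinct characters occurring among $L_1,L_2,L_3$. For $t=1$, $C$ acts as a scalar, every subspace is $C$-invariant, $a=b=p^2+p+1$, and $n=2p^2+2p+3$; this is forced when $q=2$ (the action being inversion). For $t=2$, write $N=W\oplus L$ with $W$ the $2$-dimensional isotypic summand; a short check shows the $C$-invariant lines are exactly the $p+1$ lines of $W$ together with $L$, and the $C$-invariant planes are exactly $W$ together with the $p+1$ planes of the form $L\oplus(\text{line of }W)$, so $a=b=(p+2)+(p^2-1)/q$ and $n=\frac{2(p^2-1)}{q}+2p+5$. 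For $t=3$, $N$ is multiplicity-free, so its only proper nonzero submodules are the $L_i$ and the $L_i\oplus L_j$, giving $a=b=3+(p^2+p-2)/q$ and $n=\frac{2(p^2+p-2)}{q}+7$. In each subcase the lines (resp. planes) that are not $C$-invariant fall into orbits of size $q$, since $q$ is prime and there are no further fixed points, and $q$ divides $p^2-1$ and $p^2+p-2$ because $q\mid p-1$.

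Two minor points remain. The equality $a=b$ used throughout can be obtained either by carrying out the two counts in parallel, or by observing that $V\mapsto V^{\perp}$ is a $C$-equivariant bijection between $k$-dimensional subspaces of $N$ and $(3-k)$-dimensional subspaces of the dual module $N^{*}$, while $N^{*}$ is again a $3$-dimensional fixed-point-free $\GF p[C]$-module with the same multiplicity pattern as $N$ (each character replaced by its inverse), hence falling into the same case with the same orbit counts. The step I expect to be most delicate is the bookkeeping in the case $t=2$: one must verify carefully that a line, or a plane, meeting both $W$ and $L$ nontrivially cannot be $C$-invariant, the obstruction being that $C$ would then have to act on it simultaneously by the two distinct scalars coming from its actions on $W$ and on $L$.
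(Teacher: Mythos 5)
Your proposal is correct and follows essentially the same route as the paper: reduce to counting $C$-orbits on the subgroups of order $p$ and $p^2$ of $N$ (plus $N$ itself), then split according to whether $N$ is irreducible ($q\nmid p-1$) or, when $q\mid p-1$, according to the number of homogeneous components of $N$ as a ${\rm GF}(p)[C]$-module, which yields exactly the three values in case (2). Your treatment is in fact slightly more detailed than the paper's (the explicit verification that all non self-normalizing subgroups lie in $N$, and the duality argument for the equality of the line and plane orbit counts), but the underlying argument is the same.
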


\begin{proof}
As in the previous lemma, the nontrivial non self-normalizing subgroups of $G$ are those of order $p$ or $p^2$, together with $N$. Therefore, we have to count the number of orbits of the action of $C$ on the subgroups of order $p^i$, with $i=1, 2$. Note that $N$ has $p^2 + p + 1$ subgroups of order $p$ and of order $p^2$. 

If $q$ does not divide $p-1$, then $N$ is a minimal normal subgroup of $G$; therefore, every subgroup of order $p$ or $p^2$ lies in an orbit of size $q$ for the action of $C$, and we get (1).

Assume now that $q$ divides $p-1$. Then the ${\rm {GF}}(p)[C]$-module $N$ has only $1$-dimensional irreducible constituents, and the relevant number of orbits depends on the dimensions of the homogeneous components of this module. In particular, the first value of $n$ displayed in (2) occurs when $N$ is homogeneous; in this case every subgroup of $N$ is normal in $G$, so we have $p^2+p+1$ orbits of size $1$ in the action of $C$ on the set of subgroups of order $p$ of $G$, and the same holds for the subgroups of order $p^2$. Taking into account the contribution of $N$ we then get $n=2(p^2+p+1)+1=2p^2+2p+3$. The second value in (2) occurs when $N$ has two homogeneous components $T$ and $U$, having dimensions $1$ and $2$ respectively; in this case every subgroup of order $p$ of $U$ is normal in $G$, and of course also $T$ is a normal subgroup of $G$ of order $p$, so we have a total of $p+2$ orbits of size $1$ in the action of $C$ on the set of subgroups of order $p$ of $G$. On the other hand, the remaining $p^2+p+1-(p+2)=p^2-1$ subgroups of order $p$ lie in orbits of size $q$, thus we get $(p^2-1)/q$ orbits of this kind; taking into account that the same holds for the subgroups of order $p^2$, and counting also the contribution of $N$, we get $n=\frac{2(p^2-1)}{q}+2p+5$. Finally, the third value of $n$ occurs when $N$ has three ($1$-dimensional) homogeneous components; in this case, among the subgroups of order $p$, only the three homogeneous components are normal in $G$, therefore we get three orbits of size $1$ and $(p^2+p-2)/q$ orbits of size $q$ in the action of $C$ on the set of subgroups of order $p$ of $G$. The same holds for the subgroups of order $p^2$ and, taking into account $N$, we get $n=\frac{2(p^2+p-2)}{q}+7$.
\end{proof}

\section{Center}

In this section we focus on the relationship between ${\mathcal D}(G)$ and ${\mathcal D}(\zent G)$. 

\begin{proposition} \label{center} Suppose $G$ is a nonabelian group with a nontrivial center. Then we have $${\mathcal{D}}(G)-{\mathcal{D}}(\zent G)\geq 3.$$
\end{proposition}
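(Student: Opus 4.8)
The plan is to exhibit enough non self-normalizing subgroups of $G$ that do not arise from $\zent G$ to force the gap of $3$. Write $Z = \zent G$. The key structural fact is that in a nilpotent group every proper subgroup is non self-normalizing, whereas in $G$ (nonabelian, with $Z \neq 1$) we must find subgroups genuinely ``new'' compared to those inside $Z$. I would first set up an injection from the $Z$-contribution into the $G$-contribution: every nontrivial subgroup of $Z$ is normal in $G$, hence non self-normalizing, and two subgroups of $Z$ are $G$-conjugate iff they are equal (being central), so the $\mathcal{D}(\zent G)$ classes counted inside $Z$ inject into the $\mathcal{D}(G)$ classes. Thus it suffices to produce $3$ further conjugacy classes of nontrivial non self-normalizing subgroups of $G$, none of which is contained in $Z$.

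For the three extra classes I would use the following candidates. First, $Z$ itself: since $G$ is nonabelian, $Z$ is a proper nontrivial normal subgroup, hence non self-normalizing, and it is not one of the subgroups ``properly contained in $Z$'' counted before — this is one new class (this is really the ``$+1$'' that is already visible in Lemma~\ref{rem}-type counting, namely $\mathcal{D}(G) \geq \mathcal{D}_G(Z) \geq \mathcal{D}(Z)+1$ would only give the trivial bound, so I need two genuinely new subgroups beyond $Z$). Second, pick a prime $p$ dividing $|G/Z|$ wait—better: pick $x \in G \setminus Z$ and consider $\gen{x, Z}$, or rather the subgroup $\gen{x}Z$; since $G$ is nilpotent-or-not is not assumed, I instead argue as follows. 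Let $g\in G\setminus Z$. Then $\gen g Z$ is abelian, properly contains $Z$, and is either proper in $G$ (hence subnormal? no—) or equals $G$. If $\gen g Z = G$ for every $g$, then $G/Z$ is cyclic, forcing $G$ abelian, a contradiction; so some $\gen g Z$ is a proper subgroup $A$ with $Z < A < G$. Being proper and containing $Z$, $A$ is normalized by... hmm, not automatically. The clean choice: take $A$ to be a maximal subgroup of $G$ containing $Z$; maximal subgroups need not be normal, so instead take a chief-series argument.

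Let me restructure: I would choose a prime $p$ and a subgroup $P_0$ of a Sylow $p$-subgroup $P$ of $G$ with $Z_p \le P_0 \le P$ where $Z_p$ is the $p$-part of $Z$; every proper subgroup of $P$ is non self-normalizing, giving many classes, but to be careful with conjugacy I would instead invoke the following three explicit subgroups: (i) $Z$; (ii) $N$, where $N$ is a minimal normal subgroup of $G$ not contained in $Z$ if one exists, or else work inside $\fit G$; (iii) $ZN$ or $Z\times$(something). The honest approach that I expect works uniformly: since $G$ is nonabelian there is $g\in G\setminus Z$ with $\gen g$ not normal or with $\gen{g}Z \ne G$; combined with $Z$ and with a proper normal subgroup containing $\gen{g}Z$ or $Z$ properly (which exists because $G/Z$ has a nontrivial proper subgroup structure unless $|G/Z|$ is prime, and the prime case is handled separately since then $G' \le Z$ is abelian and $G'$, $Z$, $G'Z$... give the classes). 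The main obstacle is exactly this case analysis on $G/Z$ — ensuring the three produced subgroups are pairwise non-conjugate and none lies inside $Z$ — particularly the degenerate case $|G/Z|$ prime, where one must use that $G' < Z$ or $G' = Z$-adjacent subgroups together with $Z$ supply the count. I would handle that case by noting $G' \ne 1$, $G' \cap Z$, $G'$, and $Z$ furnish enough normal (hence non self-normalizing) subgroups not contained in... — so the real work, and the part I would write out carefully, is verifying in each case that we genuinely gain $3$ and not just $1$ or $2$.
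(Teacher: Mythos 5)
Your setup is sound: since every nontrivial subgroup of $Z=\zent G$ is normal in $G$ and central subgroups are $G$-conjugate only if they are equal, one has $\mathcal{D}_G(Z)=\mathcal{D}(Z)$, so it suffices to exhibit three conjugacy classes of nontrivial non self-normalizing subgroups of $G$ not properly contained in $Z$, and $Z$ itself is the first. But the proposal never actually produces the other two: it is a sequence of candidate constructions ($\gen{g}Z$, maximal subgroups containing $Z$, minimal normal subgroups, Sylow pieces), each abandoned once you notice it need not be normal or need not avoid conjugacy collisions, ending with the admission that ``the real work'' is still to be done. Moreover, the case you single out as degenerate, $|G/Z|$ prime, cannot occur at all ($G/Z$ cyclic forces $G$ abelian), while the case that carries the entire difficulty is not identified.

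That case is $\mathcal{D}(G/Z)=1$. Indeed, by Lemma~\ref{rem} applied to $Z$, if $\mathcal{D}(G)-\mathcal{D}(Z)\le 2$ then $\mathcal{D}(G/Z)\le 1$; and if instead $\mathcal{D}(G/Z)\ge 2$, the correspondence theorem already hands you two classes of non self-normalizing subgroups strictly between $Z$ and $G$, finishing the proof. Since $G/Z$ is noncyclic, $\mathcal{D}(G/Z)=0$ is impossible, and Propositions~\ref{D_0} and~\ref{D_1} force $G/Z$ to be a Frobenius group of order $pq$. Here the quotient supplies only \emph{one} further class, namely the preimage $N$ of the Frobenius kernel (the preimages of the complements are self-normalizing), so a third subgroup must be manufactured by hand. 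The paper does this by a subcase split: if $N$ is not a $p$-group, its Sylow $p$-subgroup is characteristic in $N$, hence normal in $G$ and not contained in $Z$; if $N$ is a $p$-group, then for $Q\in\syl q G$ coprime action gives $N=[N,Q]\times\cent N Q$ with $\cent N Q=Z$, whence $G=[N,Q]Q\times Z$ and the direct factor $[N,Q]Q$ is a third proper nontrivial normal subgroup. None of this appears in your proposal, so the argument as written does not close.
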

\begin{proof} Set $Z=\zent G$ and, for a proof by contradiction, assume ${\mathcal{D}}(G)-{\mathcal{D}}(Z)\leq 2$. By Lemma~\ref{rem} we get ${\mathcal{D}}(G/Z)\leq 1$, but, since $G/Z$ cannot be cyclic, by propositions~\ref{D_0} and \ref{D_1} the only possibility is that $G/Z$ is a Frobenius group with a kernel $N/Z$ of prime order $p$ and complements of prime order~$q$. Observe that $N$ is abelian and, together with $Z$, it gives two conjugacy classes of nontrivial, proper, non self-normalizing subgroups of $G$ which add to ${\mathcal{D}}(G)-{\mathcal{D}}(Z)$.

Now, if $N$ is not a $p$-group, then its Sylow $p$-subgroup is normal in $G$ and not contained in $Z$, yielding a third conjugacy class of subgroups of $G$ that leads to a contradiction. On the other hand, let $N$ be a $p$-group: then, denoting by $Q$ a Sylow $q$-subgroup of $G$, we have $G=NQ$. By coprimality we get $N=[N,Q]\times\cent N Q$, where clearly $\cent N Q=Z$, thus $G=[N,Q]Q\times Z$. As a consequence, $[N,Q]Q$ is another nontrivial, proper, non self-normalizing subgroup of $G$ that adds to ${\mathcal{D}}(G)-{\mathcal{D}}(Z)$, yielding the final contradiction. 
\end{proof}

It follows that groups in ${\mathcal D}_k$ for $0 \leq k \leq 2$ are either abelian or have trivial centers (as already seen in Section~\ref{sect:prelim}).
The following example shows that this bound for $k$ is sharp.

\begin{example} \label{example} 
For primes $p$ and $q$ such that $q$ divides $p-1$, let $K$ be a group of order $p$ and $H$ a cyclic group of order $q^2$, and consider the group $G = K\rtimes H$ where $\cent H K$ has order $q$.

Observe that $Z=\zent G=\cent H K$ has order $q$, so ${\mathcal D}(Z) = 0$.
Now, the subgroups $Z$, $K$ and $KZ$ give a contribution of $3$ to ${\mathcal D}(G)$, and it is clear that there are no other conjugacy classes of nontrivial, proper, non self-normalizing subgroups in $G$. Hence, ${\mathcal D}(G)-{\mathcal D}(Z) = 3-0=3$.
\end{example}	

It turns out that if, for a nonabelian group $G$ with nontrivial center, the bound in Proposition \ref{center} is attained, then $G$ is in fact as in the previous example. We will show this in the remaining part of this section (see Corollary~\ref{D(G)-D(Z)=3}), taking into account that ${\mathcal D}(G) - {\mathcal D}(\zent G) =3$ yields ${\mathcal D}(G/\zent G) \leq 2$  by Lemma~\ref{rem} (but, as already observed, ${\mathcal D}(G/\zent G)$ cannot be $0$).

We consider first the case where ${\mathcal D} (G/\zent G) = 1$.

\begin{lemma} \label{D(G/Z) = 1}
Suppose ${\mathcal D}(G)-{\mathcal D}(\zent G) = 3$ and ${\mathcal D}(G/\zent G) =1$. Then the structure of $G$ is as in Example~$\ref{example}$.
\end{lemma}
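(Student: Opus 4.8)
The plan is to combine the hypothesis $\mathcal{D}(G/\zent G)=1$ with Proposition~\ref{D_1} to pin down $G/\zent G$, and then push the structural information down to $G$ itself. Set $Z = \zent G$. Since $G/Z$ cannot be cyclic (a standard fact for nonabelian $G$) and it lies in $\mathcal{D}_1$, Proposition~\ref{D_1} forces $G/Z$ to be a Frobenius group of order $pq$ with $p,q$ distinct primes; write $N/Z$ for its kernel, so $N/Z$ has order $p$ and $N \trianglelefteq G$. As already noted in the proof of Proposition~\ref{center}, both $Z$ and $N$ are nontrivial, proper, non self-normalizing subgroups of $G$ (note $N$ is abelian, being a central extension of a cyclic group by a central subgroup), and they account for $2$ of the required $3$ classes in $\mathcal{D}(G) - \mathcal{D}_G(Z)$. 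Hence there is exactly one further conjugacy class of nontrivial, proper, non self-normalizing subgroups of $G$ that is not contained in $Z$.

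Next I would determine $N$. First suppose $N$ is not a $p$-group. Then the Sylow $p$-subgroup $P$ of $N$ is characteristic in $N$, hence normal in $G$, and $P \not\le Z$ while $P \neq N$; moreover the Hall $p'$-subgroup of the abelian group $N$ is also normal in $G$ and nontrivial (it contains the $p'$-part of $Z$, but one must check it is not contained in $Z$ — if $N = P \times (N\cap Z)$ we would need $P \not\le Z$, which holds, so $P$ and the complement give at least two extra classes, or just use $P$ and $PZ$ if $P \le$ something). In any case one produces at least two conjugacy classes of non self-normalizing subgroups outside $Z$, contradicting the count. So $N$ is a $p$-group. Now let $Q$ be a Sylow $q$-subgroup of $G$; since $G/Z$ has order $pq$ and $N/Z$ is the kernel, $|G:N| = q$, so $G = NQ$ and, as $N \trianglelefteq G$ with $(|N|,|Q|)=1$, coprime action gives $N = [N,Q] \times \cent N Q$ with $\cent N Q = N \cap Z = Z$ (the last equality because $C$ acts Frobeniusly and fixed-point-freely on $N/Z$, so $\cent N Q \le Z$, and $Z \le \cent N Q$ trivially). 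Therefore $G = [N,Q]Q \times Z$ and $[N,Q]Q$ is the unique extra conjugacy class.

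The remaining task is to identify $K := [N,Q]$ and $H := QZ$ (or rather to realize $G$ as $K \rtimes H$ as in Example~\ref{example}). Since $[N,Q]Q \cong N/Z \rtimes Q$ is a Frobenius group of order $pq$, $K = [N,Q]$ has order $p$. For the complement side, $H$ should be cyclic of order $q^2$ with $\cent H K$ of order $q$: here $Z$ has prime order $q'$ for some prime — but I must show $q' = q$ and that $QZ$ is cyclic of order $q^2$. To get $Z$ of order $q$: $Z \in \mathcal{D}_0$ forces $|Z|$ prime (Proposition~\ref{D_0}), so write $|Z| = r$. If $r \neq q$ then $QZ = Q \times Z$ has order $qr$ and one can find an extra non self-normalizing subgroup (e.g. $Q$ itself inside $QZ$, which is non self-normalizing in $QZ \le G$ but need not be in $G$ — so more care is needed; alternatively argue via $r \mid |G|$ and minimality). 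Actually the cleanest route: $G = K \times (QZ')$-type decompositions force $Z \le \fit{G}$ arguments; rather, observe $G = [N,Q]Q \times Z$ means $[N,Q]Q = G/Z$-ish, and $G/Z$ has order $pq$, so $[N,Q]Q$ has order $pq$, forcing $Q$ to have order $q$; hence $|Z| = |G|/(pq)$ and since $G = [N,Q]Q \times Z$ with $[N,Q]Q$ of order $pq$ we need $Z$ abelian with $\mathcal{D}(Z)=0$, so $|Z|=r$ prime and $|G| = pqr$. Now $r = q$ is forced because otherwise $G = [N,Q]Q \times Z$ with the three factors of coprime order $p, q, r$ would have $Q \times Z$ (order $qr$), $[N,Q] \times Z$, etc., giving too many non self-normalizing classes. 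With $r = q$, set $H = QZ$: this is an abelian group of order $q^2$; it is cyclic because if $H \cong C_q \times C_q$ then $H$ has $q+1 \ge 3$ subgroups of order $q$, and the ones not equal to $Z$ and not equal to $Q^g$-conjugates would be extra non self-normalizing subgroups of $G$ — more precisely $Z$ is central hence non self-normalizing, $Q$ may or may not be, but the count $\mathcal D(G) = \mathcal D(Z) + 3$ is tight, so $H$ must be cyclic of order $q^2$ with its unique subgroup of order $q$ equal to $Z = \cent H K$. This gives $G = K \rtimes H$ exactly as in Example~\ref{example}.

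The main obstacle I anticipate is the bookkeeping in the last paragraph: ensuring that in each ``bad'' sub-case (namely $N$ not a $p$-group, or $|Z| \neq q$, or $QZ$ not cyclic) one genuinely exhibits a conjugacy class of non self-normalizing subgroups of $G$ (not merely of a subgroup of $G$) that is distinct from the classes of $Z$, $N$, and $[N,Q]Q$, so that the count $\mathcal{D}(G) - \mathcal{D}_G(Z) = 3$ is violated. The subtlety is that a subgroup which is non self-normalizing inside some proper overgroup need not be non self-normalizing in $G$; one must instead use normality in $G$ (via characteristic subgroups of normal subgroups, or central subgroups, or the direct-factor decompositions just obtained) to certify the non self-normalizing property. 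Once the direct product decomposition $G = [N,Q]Q \times Z$ is in hand, Lemma~\ref{product} and the coprimality/nilpotency equality condition there provide a clean way to count and to rule out the bad cases, which is how I would organize the final step.
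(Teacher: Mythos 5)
There is a genuine gap, and it occurs exactly where you try to ``determine $N$''. You claim that if $N$ is not a $p$-group then one can produce two extra conjugacy classes of non self-normalizing subgroups outside $\zent{G}$, and you conclude that $N$ must be a $p$-group. This cannot be right: in the target group of Example~\ref{example} one has $N=K\zent{G}$ of order $pq$, so $N$ is \emph{not} a $p$-group there. Indeed your attempted second witness evaporates: the Hall $p'$-part of $N$ lies inside $\zent{G}$ (because $N/\zent{G}$ has order $p$), and $P\zent{G}=N$ is not a new class; only the Sylow $p$-subgroup $P$ of $N$ gives a third class, which the count permits. Conversely, the branch you then pursue ($N$ a $p$-group) is the impossible one: since $\zent{G}\le N$, it would force $\zent{G}$ to be a nontrivial $p$-group, and then a Sylow $q$-subgroup $Q$ is normalized by $Q\zent{G}>Q$, instantly giving a fourth class. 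A related flaw is that the decomposition $N=[N,Q]\times\cent{N}{Q}$ by coprime action requires $\gcd(|N|,|Q|)=1$, which fails precisely in the case that actually occurs, where $q$ divides $|N|$; and in the true configuration $[N,Q]Q=G$ is not a proper subgroup, so it cannot be your ``unique extra class'' (the extra class is $[N,Q]=K$ itself).

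The paper sidesteps all of this by replacing $N$ with $M=\oh{q'}{N}$: coprimality is then automatic, $M=[M,Q]\times(M\cap\zent{G})$ with $|[M,Q]|=|N:\zent{G}|=p$, and the four normal subgroups $[M,Q]$, $[M,Q]Q$, $N$, $\zent{G}$ force $M\cap\zent{G}=1$, so $\zent{G}$ is a $q$-group; the conclusions $|\zent{G}|=q$ and $Q$ cyclic of order $q^2$ then follow by exhibiting $MX$ (for $X\le\zent{G}$ of order $q$) and $MY$ (for $Y\le Q$ of order $q$, $Y\ne\zent{G}$) as fourth normal subgroups not properly contained in $\zent{G}$. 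Your final paragraph has the right instinct about which witnesses to use, but the chain leading to it rests on the false assertion that $N$ is a $p$-group and on an inapplicable coprime-action step, so the argument as written does not go through.
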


\begin{proof} Set $Z=\zent G$. As in the first paragraph in the proof of Proposition~\ref{center}, we see that $G/Z$ is a Frobenius group with a kernel $N/Z$ of prime order $p$ and complements of prime order~$q$ (in particular, $q$ divides $p-1$). Clearly, as
$N/Z$ is cyclic and $Z$ is central in $N$, the subgroup $N$ is abelian. Setting $M=\oh{q'} N$ we have that $M$ is normal in $G$ and, if $Q$ is a Sylow $q$-subgroup of $G$, then $G$ is a semidirect product $MQ$.  By coprimality, we see
that $M = [M,Q] \times \cent M Q$ where, in fact, $\cent M Q = M \cap Z$; therefore,
$|[M,Q]| = |M:M \cap Z| = |N:Z| = p$.

Observe that $G = M Q = ([M,Q] \times (M \cap Z)) Q = [M,Q]Q \times (M \cap
Z)$.  Thus, $[M,Q]$ and $[M,Q]Q$ are normal in $G$.  Also, $N$ and $Z$ are normal
in $G$.  If $M \cap Z > 1$, then these four subgroups of $G$ are distinct, proper, nontrivial  and not properly contained in $Z$, which contradicts the
hypothesis.  Thus, we must have $M \cap Z = 1$.

Now, $M = [M,Q]$ has order $p$; moreover, $Z$ is a $q$-group, $N=M\times Z$, and the pairwise distinct normal subgroups $M$, $Z$, $N$ all contribute with $1$ to the difference ${\mathcal D}(G)-{\mathcal D}(Z)$.
If $|Z| > q$ then $Z$ has a subgroup $X$ of order $q$, and $MX$ will
be a fourth normal, nontrivial, proper subgroup of $G$ that is not properly
contained in $Z$, contradicting the hypothesis.  Thus, we have
$|Z| = q$ and $|Q| = q^2$.

If $Q$ is not cyclic, then there exists a subgroup
$Y$ of order $q$ in $Q$ so that $Y \ne Z$, and $MY$ is a fourth normal,
proper, nontrivial subgroup of $G$ that is not properly contained in $Z$.
This again contradicts the hypothesis, and the desired conclusion follows.
\end{proof}

As for the case ${\mathcal D} (G/\zent G) = 2$, we prove next that this is not compatible with the condition ${\mathcal D}(G)-{\mathcal D}(\zent G) = 3$.

\begin{lemma} \label {D (G/Z) = 2}
If ${\mathcal D} (G/\zent G) = 2$ and $\zent G> 1$, then ${\mathcal D}(G) - {\mathcal D}(\zent G)\geq 4$.
\end{lemma}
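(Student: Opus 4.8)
The plan is to exhibit four pairwise nonconjugate nontrivial non self-normalizing subgroups of $G$ that are not properly contained in $Z:=\zent G$: three of these will be read off from the quotient $\overline G:=G/Z$, and the fourth will be a subgroup in ``general position'' with respect to $Z$. Since ${\mathcal D}(G/Z)=2\ne 0$, the group $G$ is nonabelian, so $\overline G$ is nontrivial and noncyclic. We will also use the elementary fact that a $p$-group lying in ${\mathcal D}_2$ is cyclic: by Theorem~\ref{nilpotent} such a group has nilpotency class at most $1$, hence is abelian, and a noncyclic abelian $p$-group already contains a copy of $C_p\times C_p$ and thus has at least $p+1\ge 3$ proper nontrivial subgroups.

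The key reduction is that it suffices to produce a \emph{single} nontrivial subgroup $K\le G$ with $K\not\subseteq Z$ and $Z\not\subseteq K$. Such a $K$ is automatically non self-normalizing: since $Z$ is central we have $Z\le\norm G K$, and as $Z\not\subseteq K$ this gives $\norm G K\supseteq KZ\supsetneq K$. Moreover $K$ is not properly contained in $Z$, and it is $G$-conjugate neither to $Z$ nor to either of the full preimages $H_1,H_2\le G$ of the two conjugacy classes making up ${\mathcal D}(\overline G)=2$, because each of $Z,H_1,H_2$ contains the normal subgroup $Z$ whereas no conjugate of $K$ does. On the other hand $Z,H_1,H_2$ are themselves three distinct classes of nontrivial non self-normalizing subgroups of $G$ not properly contained in $Z$: this is clear for $Z$, while each $H_i$ strictly contains $Z$, is nontrivial, and is non self-normalizing because $\norm{\overline G}{H_i/Z}=\norm G{H_i}/Z$ (as in the proof of Lemma~\ref{rem}) together with the fact that $H_i/Z$ is non self-normalizing in $\overline G$. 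Hence $Z,H_1,H_2,K$ give four distinct classes, and ${\mathcal D}(G)-{\mathcal D}(\zent G)={\mathcal D}(G)-{\mathcal D}_G(Z)\ge 4$, as desired.

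It then remains to construct $K$. Fix a prime $t$ dividing $|\overline G|$. If $Z$ is not a $t$-group, take $K$ to be a Sylow $t$-subgroup of $G$; then $K\not\subseteq Z$, since $|K|=|G|_t=|Z|_t\,|\overline G|_t$ is a $t$-power strictly larger than $|Z|_t$ and so does not divide $|Z|$, while $Z\not\subseteq K$ because $K$ is a $t$-group and $Z$ is not. If instead $Z$ is a $t$-group, then $G$ itself is not a $t$-group: otherwise $\overline G$ would be a $t$-group lying in ${\mathcal D}_2$, hence cyclic by the remark above, which would force $G$ abelian. Thus $|G|$ has a prime divisor $s\ne t$, and taking $K$ of order $s$ we get $K\not\subseteq Z$ (as $Z$ has no element of order $s$) and $Z\not\subseteq K$ (as $|K|$ is prime and $Z$ is a nontrivial $t$-group with $t\ne s$). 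In every case the required $K$ exists.

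The point needing care is the bookkeeping in the second paragraph: verifying that $Z,H_1,H_2,K$ really are four \emph{pairwise nonconjugate} subgroups, each simultaneously nontrivial, non self-normalizing, and not properly contained in $Z$; the essential inputs there are the strict containments $H_i\supsetneq Z$, the normalizer correspondence for subgroups containing $Z$, and the incomparability of $K$ with $Z$. Beyond that, and the brief use of Theorem~\ref{nilpotent} to rule out $\overline G$ being a $p$-group in the last case, the construction of $K$ is routine, and notably no classification of the groups in ${\mathcal D}_2$ is needed.
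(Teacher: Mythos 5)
Your proof is correct, and it takes a genuinely different route from the paper's. The paper invokes the classification of ${\mathcal D}_2$ (a forward reference to Proposition~\ref{D_2}) to identify $G/\zent G$ as a Frobenius group of the form $C_{p^2}\rtimes C_q$ or ${\rm A}_4$, and then exhibits four explicit non self-normalizing subgroups built from Sylow subgroups of the Frobenius kernel, splitting into cases according to whether $\zent G$ is a $p$-group. You avoid the classification entirely: three classes come for free (namely $\zent G$ and the two preimages $H_1,H_2$, via the normalizer correspondence and the fact that ${\mathcal D}(\zent G)={\mathcal D}_G(\zent G)$ counts exactly the classes properly contained in the center), and the fourth is manufactured from any nontrivial subgroup $K$ incomparable with $\zent G$, whose existence follows from an elementary Sylow/Cauchy argument; Theorem~\ref{nilpotent} is used only to exclude the possibility that $G$ is a $p$-group. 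Your bookkeeping checks out: the incomparability of $K$ with the normal subgroup $\zent G$ simultaneously guarantees that $K$ is proper, non self-normalizing, not properly contained in $\zent G$, and not conjugate to any of $\zent G$, $H_1$, $H_2$. In fact your argument proves the stronger statement that ${\mathcal D}(G)-{\mathcal D}(\zent G)\geq{\mathcal D}(G/\zent G)+2$ for every nonabelian group $G$ with nontrivial center that is not a $p$-group, and one can check it against ${\rm SL}_2(3)$, where your four classes are realized by $C_2$, $C_4$, $Q_8$ and $C_3$. What the paper's approach buys is detailed structural information about $G$ that is consonant with the surrounding classification results; what yours buys is self-containedness at this point in the paper and a cleaner, more general mechanism.
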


\begin{proof}
Since ${\mathcal D} (G/\zent G) = 2$, we know that $G$ is not abelian and, in particular, $G/\zent G$ is not cyclic. Therefore, as will be proved in Proposition~\ref{D_2},  $G/\zent G$ is a Frobenius group (either of the form $C_{p^2} \rtimes C_q$ or isomorphic to ${\rm A}_4$; in the latter case, we set $p=2$ and $q=3$).   
Let $N/\zent G$ be the Frobenius kernel of $G/\zent G$.  

Let $P$ be a Sylow $p$-subgroup of $N$ and let $X$ be a subgroup of index $p$ in $P$.  If $\zent G$ is not a $p$-group, then we consider the (nontrivial) $p$-complement $Y$ of $Z$. Clearly $P$ is not self-normalizing in $G$, as $\norm G P$ contains $Y$; moreover, $X$ is normal in $P$ (hence, not self-normalizing in $G$) and not contained in $\zent G$.  Note that $XY$ is also a subgroup of $G$ not contained in $\zent G$ that is normalized by $P$.  Thus we see that $N$, $P$, $X$, and $XY$ represent four conjugacy classes of proper non self-normalizing subgroups of $G$ that are not contained in $\zent G$, and our claim is proved in this case.

Finally, assume that $\zent G$ is a $p$-group. If $Q$ is a Sylow $q$-subgroup of $G$, then $Q$ is normal in $Q \zent G$, so $Q$ is not self-normalizing in $G$.  We conclude that $N$, $X$, $\zent G$ and $Q$ all are non self-normalizing subgroups that are not properly contained in $\zent G$, which yields the result in this case as well. 
\end{proof}

As an immediate consequence of the above results, we get the following.

\begin{corollary} \label{D(G)-D(Z)=3}
Suppose $G$ is a nonabelian group with a nontrivial center. If $G$ lies in ${\mathcal D}_n$, then one of the following conclusions hold.
\begin{enumerate}
\item $\zent G$ lies in ${\mathcal D}_{n-k}$ for $k \geq 4$;
\item $G=KH$ where $K\trianglelefteq G$ has prime order $p$, $H$ is a cyclic subgroup of order $q^2$ for a suitable prime $q$ dividing $p-1$, and $\cent H K=\zent G$ has order $q$. In this case we have $G\in{\mathcal D}_3$ and $\zent G\in{\mathcal D}_0$.
\end{enumerate}
\end{corollary}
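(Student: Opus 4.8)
The plan is simply to assemble the results of this section; no essentially new argument is needed. Write $Z=\zent G$ and put $k={\mathcal D}(G)-{\mathcal D}(Z)$, so that, as $G\in{\mathcal D}_n$, we have $Z\in{\mathcal D}_{n-k}$. Proposition~\ref{center} already gives $k\geq 3$, so if $k\geq 4$ we are in conclusion~(1) and there is nothing more to do. Hence from now on I would assume $k=3$ and aim at conclusion~(2).

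First I would apply Lemma~\ref{rem} to the nontrivial proper normal subgroup $Z$, obtaining ${\mathcal D}(G/Z)\leq k-1=2$. Next I would rule out ${\mathcal D}(G/Z)=0$: since $G$ is nonabelian the quotient $G/Z$ is noncyclic, in particular it is not of prime order, so by Proposition~\ref{D_0} it does not lie in ${\mathcal D}_0$; thus ${\mathcal D}(G/Z)\in\{1,2\}$. The value $2$ is impossible, because Lemma~\ref{D (G/Z) = 2} would then force ${\mathcal D}(G)-{\mathcal D}(Z)\geq 4$, contradicting $k=3$. Therefore ${\mathcal D}(G/Z)=1$, and Lemma~\ref{D(G/Z) = 1} yields that $G$ has exactly the structure described in Example~\ref{example}: $G=KH$ with $K\trianglelefteq G$ of prime order $p$, $H$ cyclic of order $q^2$ for a prime $q$ dividing $p-1$, and $\cent H K=Z$ of order $q$.

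To complete conclusion~(2) it only remains to pin down the numerical values. This is exactly the elementary count performed in Example~\ref{example}: the only conjugacy classes of nontrivial, proper, non self-normalizing subgroups of such a $G$ are those of $Z$, $K$ and $KZ$, so ${\mathcal D}(G)=3$ and ${\mathcal D}(Z)=0$; that is, $G\in{\mathcal D}_3$ and $\zent G\in{\mathcal D}_0$, consistently with the standing assumption $k=3$.

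Since all the substantive work has already been carried out in Proposition~\ref{center} and in Lemmas~\ref{D(G/Z) = 1} and~\ref{D (G/Z) = 2}, I do not expect any genuine obstacle here; the only step requiring a little care is the exclusion of ${\mathcal D}(G/Z)=0$, which relies on the fact that a central quotient is never cyclic together with the description of ${\mathcal D}_0$ in Proposition~\ref{D_0}.
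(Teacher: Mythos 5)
Your proposal is correct and follows exactly the route the paper intends: the paper states the corollary as an ``immediate consequence'' of Proposition~\ref{center}, Lemma~\ref{rem}, Lemma~\ref{D(G/Z) = 1} and Lemma~\ref{D (G/Z) = 2}, and the preceding paragraph sketches precisely your case split on $k={\mathcal D}(G)-{\mathcal D}(\zent G)$ and on the value of ${\mathcal D}(G/\zent G)$. Your explicit exclusion of ${\mathcal D}(G/\zent G)=0$ via the noncyclicity of a central quotient is the same observation the paper makes in passing, so there is nothing to add.
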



\section{Classification:  ${\mathcal D}_2$ and ${\mathcal D_3}$} \label{sect:D_3}


In this section, we classify ${\mathcal D}_2$ and ${\mathcal D_3}$.  Next, we consider ${\mathcal D}_2$.  Notice that we obtain examples that are nonabelian and solvable with derived length equal to $2$.

\begin{proposition} \label{D_2}
We have $G \in {\mathcal D}_2$ if and only if $G$ is one of the following:
\begin{enumerate}
	\item $C_{p^3}$ for some prime $p$.
	\item $C_{pq}$ for distinct primes $p, q$.
	\item A Frobenius group of the form $C_{p^2} \rtimes C_q$. 
	\item ${\rm A}_4$.  
\end{enumerate}
\end{proposition}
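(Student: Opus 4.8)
The plan is to prove both implications, the forward one being routine and the converse carrying the weight. For the ``if'' direction: in $C_{p^3}$ and $C_{pq}$ every subgroup is normal, hence non self-normalizing, and there are exactly two proper nontrivial ones; the Frobenius group $C_{p^2}\rtimes C_q$ lies in $\mathcal{D}_2$ by Lemma~\ref{Frob 1} (kernel of order $p^2$, complement of order $q$); and for ${\rm A}_4$ one checks directly that the non self-normalizing nontrivial proper subgroups are exactly $G'\cong C_2\times C_2$ and a subgroup of order $2$, the Sylow $3$-subgroups being self-normalizing.

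For the converse, suppose $G\in\mathcal{D}_2$. By Proposition~\ref{metabelian}, $G$ is solvable with derived length at most $2$. If $G$ is abelian, then all of its nontrivial proper subgroups are counted, so $G$ has exactly two of them; an elementary inspection (a cyclic group $C_m$ has $\tau(m)-2$ such subgroups, while a noncyclic abelian group contains $C_p\times C_p$ and hence at least three subgroups of order $p$ for some prime $p$) forces $G\cong C_{p^3}$ or $G\cong C_{pq}$. So assume $G$ is nonabelian; then $G'$ is a nontrivial proper abelian normal subgroup and, as recorded right after Proposition~\ref{center}, $\zent G=1$.

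The core of the argument is to pin down $G'$ together with the conjugation action of $G/G'$ on it. Applying Lemma~\ref{rem} with $N=G'$ and writing $m={\mathcal D}(G)-{\mathcal D}_G(G')=2-{\mathcal D}_G(G')$, we get ${\mathcal D}(G/G')\le m-1$, i.e. ${\mathcal D}(G/G')+{\mathcal D}_G(G')\le 1$. Since $G/G'$ is a nontrivial abelian group, Propositions~\ref{D_0} and~\ref{D_1} leave only $G/G'\cong C_q$ or $G/G'\cong C_{q^2}$. If $G/G'\cong C_{q^2}$ then ${\mathcal D}_G(G')=0$, so $|G'|=p$ and $|G|=pq^2$; here one shows that either $G$ is the Frobenius group $C_p\rtimes C_{q^2}$, which lies in $\mathcal{D}_3$ by Lemma~\ref{Frob 1} and is thus excluded, or else $\cent G{G'}$ properly contains $G'$ and one exhibits three distinct normal nontrivial proper subgroups of $G$ (namely $G'$, the Hall $p'$-subgroup of $\cent G{G'}$, and $\cent G{G'}$ itself), contradicting ${\mathcal D}(G)=2$; so this branch is empty. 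If $G/G'\cong C_q$ then ${\mathcal D}_G(G')\le 1$; the value $0$ gives $|G|=pq$ with $G$ nonabelian, hence the Frobenius group of order $pq$, which is in $\mathcal{D}_1$, excluded; so ${\mathcal D}_G(G')=1$. Since $G'$ is abelian with a single conjugacy class of proper nontrivial subgroups, $|G'|$ must be a prime power $p^a$ with $a=2$ (for $a\le 1$ there are too few such subgroups, for $a\ge 3$ the orders $p$ and $p^2$ already give two classes), so $G'\cong C_{p^2}$ or $G'\cong C_p\times C_p$. From $\zent G=1$ one deduces $\cent G{G'}=G'$, so $G/G'\cong C_q$ acts faithfully on $G'$; when $G'\cong C_{p^2}$ this faithful action is automatically fixed-point-free (here $q\ne p$, and an order-$q$ automorphism of $C_{p^2}$ is multiplication by a unit not congruent to $1$ modulo $p$), so $G$ is the Frobenius group $C_{p^2}\rtimes C_q$; when $G'\cong C_p\times C_p$, the condition ${\mathcal D}_G(G')=1$ says $C_q$ is transitive on the $p+1$ subgroups of order $p$, which forces $p+1=q$, hence $(p,q)=(2,3)$ and $G\cong{\rm A}_4$. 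Assembling the surviving possibilities gives exactly the four families.

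The main obstacle is the bookkeeping in the nonabelian case: making the case split on $G/G'$ and on $G'$ exhaustive, and three points deserve care — that ``${\mathcal D}_G(G')=1$ with $G'$ abelian'' really forces $|G'|=p^2$ with only the two listed module structures, that a faithful $C_q$-action on $C_{p^2}$ is fixed-point-free (so that $G$ is genuinely the asserted Frobenius group and not a nonsplit or non-Frobenius extension), and that the orbit count in the $C_p\times C_p$ case collapses to the single group ${\rm A}_4$. In the $G/G'\cong C_{q^2}$ branch one must also produce the extra conjugacy class of subgroups cleanly in order to close it off.
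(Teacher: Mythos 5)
Your proposal is correct and follows essentially the same route as the paper: solvability and derived length at most $2$ from Proposition~\ref{metabelian}, the abelian case by direct counting, and the nonabelian case by applying Lemma~\ref{rem} to $G'$ and analyzing the possibilities $G/G'\cong C_{q^2}$ (excluded) and $G/G'\cong C_q$ with $G'\cong C_{p^2}$ or $C_p\times C_p$. The only cosmetic differences are that you conclude the Frobenius structure via a faithful fixed-point-free action rather than via a self-normalizing Sylow $q$-subgroup, and you should note explicitly (as the paper does) that $p\neq q$ in the $G/G'\cong C_{q^2}$ branch, which holds since otherwise $G$ would be a group of order $q^3$ with $G'$ central and $G/\zent G$ cyclic, hence abelian.
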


\begin{proof}
Clearly, if $G$ is $C_{p^3}$ or $C_{pq}$ for $p$,$q$ distinct primes, then $G\in{\mathcal{D}}_2$; also, if $G$ is a Frobenius group of the form $C_{p^2} \rtimes C_q$ where $q$ divides $p-1$, or ${\rm A}_4$, then $G \in {\mathcal D}_2$ by Lemma \ref{Frob 1} and Lemma \ref{Frob 2}(1) respectively.  
	
Conversely, suppose that $G \in {\mathcal D}_2$.  By Proposition \ref{metabelian}, we know $G$ is solvable and its derived length is at most $2$.  If its derived length is $1$, then $G$ is abelian, and all subgroups are normal in $G$.  Since $G \in {\mathcal D}_2$, we see that $G$ has only two proper, nontrivial subgroups, and it is not difficult to determine that this implies that either $G$ is cyclic of order $p^3$ for some prime $p$ or $G = C_{pq}$ for distinct primes $p$ and $q$.  
	
In the remaining cases, $G$ has derived length $2$, so that $G'$ will be a proper, nontrivial, abelian, non self-normalizing subgroup of $G$.  By Lemma \ref{rem}, $G/G'$ has at most one proper nontrivial subgroup, and we first assume that it has precisely one such subgroup (hence, $G/G'$ is a  cyclic group of order $q^2$). Under this assumption, the two proper nontrivial non self-normalizing subgroups of $G$ are then $G'$ and a normal subgroup of $G$ of index $q$. As a consequence, $G'$ must be a group of prime order.  We have that $G$ is not a $q$-group: otherwise $G'$ would be central and, since $G / \zent G$ is cyclic, $G$ would be abelian. Therefore, $G$ is isomorphic to a group of the form $C_p \rtimes C_q^2$; but in this case we get three nontrivial, non self-normalizing subgroups of order $p$, $q$ and $pq$. We conclude that $G/G'$ does not have any proper nontrivial subgroup, so $G/G' \cong C_q$ for some prime $q$. 
	
Assume now that $G'$ is cyclic; then it  has order coprime to $q$ (as otherwise $G$ would have a subgroup of index $q^2$) and  its subgroups are characteristic and so normal in $G$.  This implies that $G'$ has at most one proper, nontrivial subgroup.  Note that if $G'$ has no proper nontrivial subgroups, then $G'$ has order $p$, and since $G$ is not abelian, this implies that $G$ is a Frobenius group of order $pq$, and by Lemma \ref{Frob 1}, we have $G \in {\mathcal{D}}_1$ which is a contradiction. 

Thus $G'$ has one proper, nontrivial subgroup, and Proposition~\ref{D_1} yields that $G'$ is cyclic of order $p^2$.  Note that $G'$ and its subgroup of order $p$ account for all of the nontrivial non self-normalizing subgroups of $G$, so a Sylow $q$-subgroup of $G$ must be self-normalizing and $G$ is hence a Frobenius group as in (3).
	
Finally, suppose that $G'$ is not cyclic.  Since $G'$ is abelian, all its  proper nontrivial subgroups are not self-normalizing in $G$. It follows that they must be conjugate in $G$, and therefore $G'$ must be an elementary abelian $p$-group for a suitable prime $p$; moreover, $G'$ has order $p^2$. Since there are $p + 1$ subgroups of order $p$, we have $p + 1 = q$. Thus, $p = 2$, $q = 3$ and $G\cong {\rm A}_4$.
\end{proof}

We next classify groups with three conjugacy classes of nontrivial, non self-normalizing subgroups.  However, beyond this, we think the number of isomorphism classes of groups is going to become quite large.  As we proved in Proposition \ref{metabelian}, we see that the nonabelian examples in this case are solvable with derived length equal to $2$.

\begin{proposition} \label{D_3}
We have $G \in {\mathcal D}_3$ if and only if $G$ is one of the following:
\begin{enumerate}
	\item $C_{p^4}$ for some prime $p$.
	\item $C_2 \times C_2$.
	\item A Frobenius group of the form $C_{p^3} \rtimes C_q$, for $p$ and $q$ primes.
	\item A Frobenius group of the form $(C_p \times C_p) \rtimes C_q$, for $p$ and $q\neq 2$ primes such that $p=2q-1$.
	\item A Frobenius group of the form $C_p \rtimes C_{q^2}$, for $p$ and $q$ primes. 
	\item A Frobenius group of the form $C_{pr} \rtimes C_q$, for $p$, $q$ and $r$ primes. 
	\item A Frobenius group of the form $(C_p \times C_p \times C_p) \rtimes C_q$, for $p$ and $q$ primes such that $q = p^2+p+1$.
	\item A group of the form $C_p \rtimes C_{q^2}$ with $|\zent G|=q$, for $p$ and $q$ primes. 
\end{enumerate}
\end{proposition}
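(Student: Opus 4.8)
\emph{Overview.} The plan is to prove both implications, with the forward direction being essentially bookkeeping via the Frobenius lemmas of Section~\ref{frob sect}, and the converse carrying the weight. Throughout I would use Proposition~\ref{metabelian} to reduce at once to solvable groups of derived length at most $2$, Lemma~\ref{rem} to control $\mathcal{D}(G/G')$, and Corollary~\ref{D(G)-D(Z)=3} to dispose of the groups with nontrivial center.

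\emph{The ``if'' direction.} Groups (1) $C_{p^4}$ and (2) $C_2\times C_2$ are abelian, so one merely checks that each has exactly three proper nontrivial subgroups. For the Frobenius families I would invoke the counting lemmas: family (3) follows from Lemma~\ref{Frob 1} with a kernel of order $p^3$ and a complement of order $q$ (giving $\mathcal{D}_{(3+1)\cdot 1-1}=\mathcal{D}_3$); family (5) from Lemma~\ref{Frob 1} with a kernel of order $p$ and a complement of order $q^2$ (giving $\mathcal{D}_{(1+1)\cdot 2-1}=\mathcal{D}_3$); family (4) from Lemma~\ref{Frob 2}(1), since $p=2q-1$ with $q\neq 2$ forces $q\nmid p-1$ and $(p+1)/q=2$; family (7) from Lemma~\ref{Frob 3}(1), since $q=p^2+p+1$ forces $q\nmid p-1$ and $(p^2+p+1)/q=1$; and family (6) from a short orbit count in $C_{pr}\rtimes C_q$, whose only nontrivial non self-normalizing subgroups are $C_p$, $C_r$, and $C_{pr}$ (any subgroup not inside the kernel contains a conjugate of the self-normalizing complement, hence is self-normalizing). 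Finally, group (8) is precisely the group of Example~\ref{example}, which lies in $\mathcal{D}_3$ by Corollary~\ref{D(G)-D(Z)=3}.

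\emph{The ``only if'' direction, easy cases.} Let $G\in\mathcal{D}_3$; by Proposition~\ref{metabelian}, $G$ is solvable with derived length at most $2$. If $G$ is abelian, then it has exactly three proper nontrivial subgroups, which forces $G\cong C_{p^4}$ or $G\cong C_2\times C_2$ (for instance $C_p\times C_p$ has $p+1$ subgroups of order $p$, so $p=2$, and groups with more prime factors or higher exponent have too many subgroups). Assume now $G$ is nonabelian, so $G'$ is a proper, nontrivial, abelian, non self-normalizing subgroup; by Lemma~\ref{rem}, $\mathcal{D}(G/G')\leq 2$, so by Propositions~\ref{D_0}, \ref{D_1} and \ref{D_2} the abelian group $G/G'$ is cyclic of order $q$, $q^2$, $q^3$ or $qr$. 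Next split on the center: if $\zent G\neq 1$, apply Corollary~\ref{D(G)-D(Z)=3}; its option~(1) would require $\zent G\in\mathcal{D}_{3-k}$ with $k\geq 4$, which is impossible, so option~(2) holds and $G$ is exactly family~(8).

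\emph{The main case $\zent G=1$, and the expected obstacle.} This is where the real work lies. First I would show that $|G'|$ and $|G/G'|$ are coprime: a prime $q$ dividing both would, through the action of a Sylow $q$-subgroup on its intersection with $G'$ (using that a $q$-group acting on a nontrivial $\mathbb{F}_q$-section has a fixed point, together with $G'$ being a $q$-group being ruled out since then $G$ would be a $q$-group with cyclic central quotient, hence abelian), produce either a central element --- contradicting $\zent G=1$ --- or a fourth conjugacy class of non self-normalizing subgroups. By Schur--Zassenhaus, $G=N\rtimes C$ with $N=G'$ abelian and $C\cong G/G'$ cyclic. Next I would prove that $C$ acts fixed-point-freely on $N$, i.e.\ $G$ is Frobenius with kernel $N$: a nontrivial fixed point would enlarge the ``visible'' normal structure (a proper nontrivial $C$-invariant subgroup of $N$, its product with $C$, and possibly $N$ itself) past three classes, with $\zent G=1$ excluding the degenerate configurations. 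Once $G=N\rtimes C$ is Frobenius with $N$ abelian and $C$ cyclic, the condition $\mathcal{D}(G)=3$ squeezes both factors: if $|C|$ is not prime, then the proper subgroups of $C$ are already non self-normalizing, so $N$ must be cyclic and Lemma~\ref{Frob 1} forces $N\cong C_p$, $C\cong C_{q^2}$ (family~(5)); if $|C|=q$ is prime, then every non self-normalizing subgroup lies in $N$, so $N$ has at most three $C$-orbit-classes of nontrivial subgroups, which restricts $N$ to $C_{p^3}$ (family~(3)), $C_p\times C_p$ (family~(4), the relation $p=2q-1$ coming from Lemma~\ref{Frob 2}), $C_p\times C_p\times C_p$ (family~(7), the relation $q=p^2+p+1$ from Lemma~\ref{Frob 3}), or $C_{pr}$ with $p\neq r$ (family~(6)). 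Any larger or mixed kernel --- e.g.\ a non-elementary-abelian $p$-group of order $p^3$ such as $C_{p^2}\times C_p$ --- already carries more than three $C$-invariant subgroups and is discarded by a direct inspection. Carrying out this last enumeration cleanly, especially eliminating the borderline kernels, is the delicate point, but after the reduction to Frobenius groups the lemmas of Section~\ref{frob sect} turn it into a finite check.
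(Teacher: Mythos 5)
Your proposal is correct in substance but organizes the converse quite differently from the paper. The paper never isolates ``$G$ is Frobenius'' as a standalone intermediate step: it branches on ${\mathcal D}(G/G')\in\{0,1,2\}$ (hence on $G/G'\cong C_q$, $C_{q^2}$, $C_{q^3}$ or $C_{qr}$) and, inside each branch, on whether $G'$ is cyclic, extracting the Frobenius property and the coprimality of $|G'|$ and $|G/G'|$ ad hoc each time from $\zent G=1$ and the three-class bound. You instead front-load two global reductions --- $(|G'|,|G:G'|)=1$ and then $G=G'\rtimes C$ Frobenius --- and only afterwards enumerate kernels; this buys a cleaner final enumeration (the four kernel shapes $C_{p^3}$, $C_{pr}$, $C_p\times C_p$, $C_p^{\,3}$ drop out of a single orbit count, and the borderline kernels like $C_{p^2}\times C_p$ or $C_{p^4}$ are killed uniformly by counting characteristic subgroups), at the price of concentrating all the difficulty into those two reductions. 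Be aware that both are thinner than your one-sentence justifications suggest. For coprimality, your ``fixed point of a Sylow $q$-subgroup on $Q\cap G'$'' argument yields a central element only when $QG'=G$, i.e.\ when $G/G'$ is a $q$-group; in the sub-case $G/G'\cong C_{qr}$ with $q\mid|G'|$ the three classes are already forced to be $G'$ and its two overgroups, so $G'$ must have prime order, and you then need the coprime-action decomposition $M=[M,R]\times\cent M R$ together with $\zent G=1$ (exactly the paper's device in its $|G:G'|=qr$ paragraph) to finish. For fixed-point-freeness, note that $C_N(C)=1$ is automatic from $N=G'=[N,C]$, but Frobenius requires $C_N(c)=1$ for \emph{every} $1\neq c\in C$; when $|C|=q^2$ and $c$ has order $q$, a nontrivial $M=C_N(c)$ is not central, and you must exhibit the extra classes explicitly ($M$, $N$, $N\langle c\rangle$ and $\langle c\rangle$ already give four). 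With those two points filled in, your plan goes through and recovers exactly the paper's list.
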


\begin{proof}
Since $C_{p^4}$ and $C_2 \times C_2$ have three proper, nontrivial subgroups which are normal, these abelian groups lie in ${\mathcal D}_3$.  We use Lemma \ref{Frob 1} to see that the groups as in (3) and in (5) lie in ${\mathcal D}_3$.  Applying Lemma \ref{Frob 2} and Lemma \ref{Frob 3}, we see that the groups in (4) and (7) (respectively) are in ${\mathcal D}_3$ as well.  Now, let $G$ be a group as in (6); then it is easy to see that $G$ has three normal subgroups, of orders $p$, $q$, $pr$ respectively, whereas any other nontrivial subgroup is self-normalizing. Therefore such a group $G$ lies in ${\mathcal D}_3$.
Finally, as shown in Example~\ref{example}, a group as in (8) lies in ${\mathcal D}_3$.

Conversely, suppose that $G$ lies in ${\mathcal D}_3$.  By Proposition \ref{metabelian} we know that $G$ is solvable with derived length at most $2$.  If $G$ is abelian, then $G$ has three proper, nontrivial subgroups.  If $G$ is cyclic, then $G \cong C_{p^4}$ for some prime $p$.  If $G$ is not cyclic, then it has a subgroup isomorphic to $C_p \times C_p$, and $C_p \times C_p$ has $p+1$ subgroups of order $p$.  Thus, in this case the only possibility is $G\cong C_2 \times C_2$.

We now suppose that $G$ is not abelian. Under this assumption, if $\zent G > 1$, then Corollary \ref{D(G)-D(Z)=3} yields that $G$ is as in (8).  Thus, it remains to consider the case when $\zent G = 1$.  

We know that $G$ has derived length equal to $2$, so $G'$ is a proper, nontrivial abelian subgroup.  By Lemma \ref{rem}, we know that ${\mathcal D}(G/G') \le 2$.  We first suppose that $G/G'$ has no proper, nontrivial non self-normalizing subgroups.  Using Proposition~\ref{D_0}, we see that $G/G' \cong C_q$ for some prime $q$. 

If $G'$ is cyclic, then its subgroups are characteristic and so normal in $G$;  this implies that $G'$ has at most two proper, nontrivial subgroups.  Note that if $G'$ has no or one proper nontrivial subgroups, then $G'$ has order $p$ or $p^2$ where $p$ is a prime; moreover, since we are assuming $\zent G= 1$, we have that $p\neq q$, and no nontrivial element of $G'$ can be centralized by an element of order $q$. It follows that $G$ is a Frobenius group of order $pq$ or $p^2q$; by Lemma \ref{Frob 1} we have $G \in {\mathcal{D}}_1$ or ${\mathcal D}_2$, which is a contradiction.  Thus $G'$ has two proper, nontrivial subgroups, which implies that $G'$ has order $p^3$ for a prime $p$, or $G'$ has order $pr$ where $p$ and $r$ are distinct primes. As above, we easily see that $q$ is coprime with $|G'|$, and $G$ is a Frobenius group 
as in (3) or (6) in this case.

Still assuming $G/G'\cong C_q$, we now suppose that $G'$ is not cyclic: in this case, there exists a prime $p$ such that $G'$ has a subgroup isomorphic to $C_p\times C_p$. Our first claim is that $G'$ is a $p$-group (again, with $p\neq q$). In fact, if $|G'|$ is divisible by a prime $r\neq p$, then the Sylow $p$-subgroup and the Sylow $r$-subgroup of $G'$ are nontrivial, proper subgroups of $G'$ that are normal in $G$; these, together with a (proper) subgroup of order $pr$ of $G'$ and $G'$ itself, yield a contradiction. Next, $G'$ is necessarily an elementary abelian $p$-group, as otherwise it would have proper subgroups isomorphic to $C_p$, $C_{p^2}$ and $C_p\times C_p$ which, together with $G'$, again yield a contradiction. Arguing similarly, it is not difficult to see that $G'$ must be isomorphic either to $C_p\times C_p$ or to $C_p\times C_p\times C_p$ and (recalling that $\zent G=1$) that $G$ is a Frobenius group. Using now Lemma~\ref{Frob 2} and Lemma~\ref{Frob 3}, we get that $G$ is as in (4) or (7) respectively. 



Next, we suppose that $G/G'$ has one proper, nontrivial non self-normalizing subgroup $N/G'$. Since $G/G'$ is abelian, we apply Proposition~\ref{D_1} to see that $G/G' \cong C_{q^2}$ for some prime $q$. Let $Q$ be a Sylow $q$-subgroup of $G$ (note that $|Q|$ is at least $q^2$). Considering a subgroup $T$ of $Q$ having order $q$, we see that the $G$-conjugacy classes of $T$, $G'$ and $N$ account for all the $G$-conjugacy classes of nontrivial, proper, non self-normalizing subgroups of $G$. Note that, as a consequence, $|G'|$ must be a power of a prime $p$, with $p\neq q$ again because $\zent G=1$; hence $T$ is not contained in $G'$. Now it is immediate to see that $G'$ lies in ${\mathcal D}_0$, thus $G'\cong C_p$. To sum up, $G$ is a group of the form $C_p\rtimes C_{q^2}$ with no element of order $pq$ (as otherwise $\zent G$ would contain an element of order $q$), therefore $G$ is a Frobenius group as in (5).


The remaining case is that $G/G'$ has two proper, nontrivial non self-normalizing subgroups (which implies $G'\in{\mathcal D}_0$).   Since $G/G'$ is abelian, we apply Proposition \ref{D_2} to see that $G/G' \cong C_{q^3}$ for some prime $q$ or $G/G' \cong C_{qr}$ for distinct primes $q$ and $r$. In the former case, let $N$ and $U$ be subgroups of $G$, both containing $G'$, having index $q$ and $q^2$ in $G$ respectively; then $G'$, $U$ and $N$ account for all the $G$-conjugacy classes of nontrivial, proper, non self-normalizing subgroups of $G$, but now we get a contradiction by considering a subgroup of order $q$ of $G$ (which is certainly different from $G'$ as otherwise $\zent G$ would be nontrivial).



Hence, we have $|G:G'| = qr$.  Recalling that $G'$ lies in ${\mathcal D}_0$, we have that $|G'|$ is a prime $p$ that we can assume is different from $r$. Let $N$ and $M$ be subgroups of $G$, both containing $G'$, so that $|G:N| = q$ and $|G:M| = r$. Now $G'$, $M$ and $N$ account for all the $G$-conjugacy classes of nontrivial, proper, non self-normalizing subgroups of $G$. If $p\neq q$, then $G\cong C_p\rtimes C_{qr}$ has also a $G$-conjugacy class of non self-normalizing subgroups of order $q$, yielding a contradiction. On the other hand, if $p=q$ then, denoting by $R$ a Sylow $r$-subgroup of $G$, we have $M=[M,R]\times \cent M R$; but the fact that $\zent G=1$ yields $\cent M R=1$, hence $M=[M,R]\subseteq G'$, the final contradiction.
\end{proof}

\section{Solvability}

Using Proposition \ref{metabelian} we can prove that, if a solvable group lies in $\mathcal{D}_n$ for $n\geq 3$, then its derived length is at most $n-1$. 

\begin{proposition}\label{derivedlength}
Let $G$ be a solvable group. If $G\in\mathcal{D}_n$ with $n\geq 3$, then the derived length $d$ of $G$ is at most $n-1$.
\end{proposition}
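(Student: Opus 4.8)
The plan is to run an induction on $n$ that exploits Lemma~\ref{rem}. Note first the naive estimate: the terms $G'=G^{(1)},\,G^{(2)},\dots,G^{(d-1)}$ of the derived series are $d-1$ pairwise distinct nontrivial proper normal subgroups of $G$, hence represent $d-1$ distinct conjugacy classes of non self-normalizing subgroups, so $d-1\le n$. This only yields $d\le n+1$, so the real point is to gain two more units, and the mechanism for that is that passing to a proper nontrivial quotient strictly decreases $\mathcal{D}$.

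So I would argue by strong induction on $n\ge 3$. The base case $n=3$ is exactly Proposition~\ref{metabelian}, which gives $d\le 2=n-1$. For the inductive step, let $n\ge 4$, assume the statement for every solvable group lying in $\mathcal{D}_{n'}$ with $3\le n'\le n-1$, and let $G\in\mathcal{D}_n$ be solvable of derived length $d$. If $d\le 3$ then $d\le 3\le n-1$ and we are done, so assume $d\ge 4$ and set $N=G^{(d-1)}$; this is a nontrivial normal subgroup of $G$ contained in $G'$, hence proper, and $\bar G=G/N$ is solvable of derived length exactly $d-1\ge 3$. Since by Proposition~\ref{metabelian} no group of derived length at least $3$ lies in $\mathcal{D}_{n'}$ for $n'\le 3$, writing $\bar G\in\mathcal{D}_m$ (legitimate, as $\bar G$ is finite) forces $m\ge 4$; and Lemma~\ref{rem}, applied to the nontrivial proper normal subgroup $N$ of $G$, gives $\mathcal{D}(\bar G)\le\mathcal{D}(G)-1$, that is $m\le n-1$. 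Thus $4\le m\le n-1$, so $3\le m<n$ and the inductive hypothesis applies to $\bar G$, yielding $d-1=\mathrm{dl}(\bar G)\le m-1\le n-2$; hence $d\le n-1$, which closes the induction.

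Once Proposition~\ref{metabelian} and Lemma~\ref{rem} are granted, the argument is essentially bookkeeping, so I do not anticipate a serious obstacle; the one point requiring care is to peel off $G^{(d-1)}$ \emph{only} when $d\ge 4$, so that the quotient still has derived length at least $3$ and therefore lands in $\mathcal{D}_m$ with $m\ge 4$ — this is precisely what keeps $m$ inside the range $\{3,\dots,n-1\}$ on which the inductive hypothesis is available. (In particular, when $n=4$ the constraint $4\le m\le n-1$ is vacuous, which just records that $d\ge 4$ cannot occur, exactly as wanted.) The cases $d\le 3$ are disposed of directly using $n\ge 4$.
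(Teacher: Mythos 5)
Your proof is correct and follows essentially the same route as the paper's: both peel off the abelian term $N=G^{(d-1)}$, apply Lemma~\ref{rem} to place $G/N$ in some $\mathcal{D}_m$ with $m\le n-1$, dispose of the small cases via Proposition~\ref{metabelian}, and then invoke induction (the paper phrases this as a minimal counterexample rather than strong induction on $n$, a purely cosmetic difference). No gaps.
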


\begin{proof}
Let $G$ be a minimal counterexample to the statement, and let $N$ be the abelian subgroup $G^{(d-1)}$. Since $N$ is clearly nontrivial, we are in a position to apply Lemma~\ref{rem} obtaining that $G/N$ lies in $\mathcal{D}_m$ for a suitable $m\leq n-1$. If $m\geq 3$, our minimality assumption yields that the derived length of $G/N$ is at most $m-1\leq n-2$, thus $d\leq n-2+1=n-1$, contradicting the fact that $G$ is a counterexample. On the other hand, if $m\leq 2$, then the derived length of $G/N$ is at most $2$ by Proposition~\ref{metabelian}, and hence $d\leq 3$; this forces $n\leq 3$ (otherwise $G$ is not a counterexample), but then again Proposition~\ref{metabelian} yields the final contradiction, and we conclude that $d\leq 2$. 
\end{proof}

It can be easily checked that the alternating group $\textnormal{A}_5$ lies in $\mathcal{D}_4$, therefore the conclusion of Proposition~\ref{metabelian} does not hold already for $n=4$. In fact, $\textnormal{A}_5$ turns out to be the unique nonsolvable group belonging to $\mathcal{D}_4$.  We prove that now.

\begin{theorem}\label{NonsolvableD4} 
Let $G$ be a nonsolvable group. If $G\in\mathcal{D}_4$, then $G\cong\textnormal{A}_5$.
\end{theorem}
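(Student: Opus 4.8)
The plan is to show that a nonsolvable group $G \in \mathcal{D}_4$ must be isomorphic to $\textnormal{A}_5$. I would start by extracting structural constraints from the fact that $G$ has only four conjugacy classes of nontrivial non self-normalizing subgroups. Since $G$ is nonsolvable it is not nilpotent, so not all proper subgroups are non self-normalizing; nonetheless, as noted in the background, every proper subgroup of a Sylow subgroup is non self-normalizing, and every proper nontrivial normal (and subnormal) subgroup is non self-normalizing. The first step is therefore to bound the Sylow structure: if some Sylow $p$-subgroup $P$ has order $p^a$, then $P$ contributes at least $a-1$ distinct conjugacy classes of non self-normalizing subgroups (the proper nontrivial subgroups of $P$ through a chain), so $a$ is small — in particular no Sylow subgroup can have order larger than $p^4$, and a Sylow subgroup of order $p^3$ or $p^4$ already forces $G$ to be very close to a $p$-group, which contradicts nonsolvability via Burnside's normal $p$-complement theorem (Theorem~\ref{Burnside}) as in the proof of Proposition~\ref{metabelian}. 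So realistically each Sylow subgroup has order $p$ or $p^2$, and the count of $4$ is tight.

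Next I would analyze the chief structure. Let $N$ be a minimal normal subgroup of $G$; since $G$ is nonsolvable, some chief factor is a direct power of a nonabelian simple group $S$. If $G$ has a nontrivial proper normal subgroup $M$, then $M$ is non self-normalizing and, by Lemma~\ref{rem}, $G/M \in \mathcal{D}_m$ with $m \le 3$, hence $G/M$ is solvable by Proposition~\ref{metabelian}; similarly any nontrivial normal subgroup forces the quotient to be solvable. Combined with nonsolvability this pushes $G$ toward being almost simple with a simple normal subgroup $S$ such that $G/S$ is solvable, and in fact $S$ itself is non self-normalizing only if $S \ne G$ — so I would split into the cases $G$ simple and $G$ having a proper simple normal subgroup $S$ with $G/S$ solvable nontrivial. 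In the latter case $S$ and each intermediate normal subgroup eat into the count of $4$, and $G/S \in \mathcal{D}_{\le 3}$ forces $G/S$ to have order $p$, $pq$, $p^2$, etc.; one then needs to count the non self-normalizing subgroups that meet $S$ nontrivially but are not contained in $S$, which rapidly exceeds $4$ unless $S$ is tiny, ruling this case out.

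For the simple case, the Sylow bound shows $|G|$ has each Sylow subgroup of order $p$ or $p^2$, and the smallest nonabelian simple group $\textnormal{A}_5$ (of order $2^2 \cdot 3 \cdot 5$) fits exactly. I would then argue that $\textnormal{A}_5$ genuinely lies in $\mathcal{D}_4$ — its non self-normalizing subgroups up to conjugacy are the two classes of order $2$ and order $3$ (Sylow subgroups being self-normalizing in $\textnormal{A}_5$? no — one checks directly: the subgroups of order $2$, the subgroups of order $3$, ... actually the four classes are $C_2$, $C_3$, $C_2 \times C_2$, and one more) — and that no other simple group can work: any simple group with all Sylow subgroups of order at most $p^2$ and with at most $4$ classes of non self-normalizing subgroups must be $\textnormal{A}_5$. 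To close this I would invoke the classification of simple groups with restricted Sylow structure, or more elementarily note that $\textnormal{PSL}_2(q)$ and the other candidates of small order ($\textnormal{A}_5 \cong \textnormal{PSL}_2(4) \cong \textnormal{PSL}_2(5)$, then $\textnormal{PSL}_2(7)$, $\textnormal{A}_6$, etc.) all fail the count for the next-smallest cases by exhibiting five or more non self-normalizing subgroup classes (e.g. $\textnormal{PSL}_2(7)$ has Sylow $2$-subgroups of order $8$, already too large). The main obstacle I anticipate is the case $G$ almost simple but not simple: bounding the number of non self-normalizing subgroups arising from subgroups $H$ with $1 < H \cap S < H$ requires a careful argument about how subgroups of $G$ project to $G/S$ and intersect $S$, analogous to but more delicate than the Frobenius analysis in Section~\ref{frob sect}; showing this always produces more than $4$ classes (unless $G = S = \textnormal{A}_5$) is where the real work lies.
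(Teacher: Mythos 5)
Your overall strategy matches the paper's: use Lemma~\ref{rem} and Proposition~\ref{metabelian} to make every proper quotient solvable, reduce to an almost simple and then a simple group, and constrain the Sylow structure by counting classes of non self-normalizing subgroups and applying Burnside's normal $p$-complement theorem. However, there are two genuine gaps. First, you explicitly defer the almost-simple-but-not-simple case (``where the real work lies''); the paper resolves it concretely: writing $S$ for the socle and taking odd primes $p,q$ dividing $|S|$, one shows that a Sylow subgroup of order $\geq p^3$ (or a Sylow $2$-subgroup of order $\geq 8$) forces some odd Sylow subgroup to be abelian and self-normalizing, whence Theorem~\ref{Burnside} yields a normal complement and a contradiction. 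This cascade of normal-complement contradictions is the actual mechanism, not merely a counting obstruction, and it is also how one justifies your informal claim that Sylow subgroups of order $p^3$ or $p^4$ are impossible --- that claim does not follow just from counting subgroups in a chain inside $P$.

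Second, and more seriously, your endgame for the simple case is incomplete. Knowing that the Sylow subgroups are small does not by itself reduce the problem to a finite check: there are infinitely many simple groups (e.g.\ among the ${\rm PSL}_2(q)$) with abelian Sylow subgroups of bounded order for most primes, so listing ``$\textnormal{A}_5$, ${\rm PSL}_2(7)$, $\textnormal{A}_6$, etc.'' and checking the smallest ones proves nothing about the rest. The paper's key step, which your proposal is missing, is to prove that $|G|$ is divisible by \emph{exactly three} primes (again via repeated applications of Burnside to force normal complements), and then to invoke Herzog's classification of nonabelian simple groups whose order has exactly three prime divisors: $\textnormal{A}_5$, $\textnormal{A}_6$, ${\rm L}_2(7)$, ${\rm L}_2(8)$, ${\rm L}_2(17)$, ${\rm L}_3(3)$, ${\rm U}_3(3)$, ${\rm U}_4(2)$. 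Only with this finite list in hand can one rule out the last four by Sylow orders and the remaining three by direct computation of their $\mathcal{D}$-values. Without the three-prime reduction and an external classification result (or an equivalent finiteness argument), your proof cannot conclude $G\cong\textnormal{A}_5$.
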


\begin{proof}
Assume that $G$ is a nonsolvable group in ${\mathcal D}_4$.  Suppose first that $N$ is a nontrivial normal subgroup of $G$: then, in view of Lemma~\ref{rem}, $G/N$ lies in $\mathcal{D}_n$ for a suitable $n \leq 3$, therefore Proposition~\ref{metabelian} yields that $G/N$ is solvable. We deduce that, in particular, the solvable radical of $G$ is trivial and every minimal normal subgroup of $G$ is nonabelian.

Next, we claim that $G$ has a unique minimal normal subgroup, which is simple. In fact, if $M_1$ and $M_2$ are distinct minimal normal subgroups of $G$, then both of them have a subgroup of order $2$ that is not self-normalizing (and these two subgroups are clearly not conjugate in $G$); as $M_1$ and $M_2$ are obviously not self-normalizing and not conjugate in $G$, we conclude that $G=M_1\times M_2$, which is against the previous paragraph because $G/M_1\cong M_2$ is not solvable. Now, let $M$ be the unique minimal normal subgroup of $G$: we have that $M=S_1\times\cdots\times S_k$ is the direct product of $k$ isomorphic nonabelian simple groups for a suitable positive integer $k$. If $k>1$ then every subgroup of $S_1$ is not self-normalizing in $G$: this holds in particular for a subgroup of order $2$ and for every Sylow subgroup of $S_1$, which easily yields a contradiction taking into account that also $M$ is not self-normalizing. 

So, $G$ is an almost-simple group, and we assume that $G$ is not simple. Denoting by $S$ the socle of $G$, let $p$ and $q$ be odd prime divisors of $|S|$, and let $D$, $P$, $Q$ be Sylow subgroups of $S$ for the primes $2$, $p$, $q$ respectively. Now, any subgroup of order $2$ of $G$ is not self-normalizing in $G$, as well as $S$; if $|P|\geq p^3$, then we have subgroups of order $p$ and $p^2$ that are not self-normalizing in $G$ as well, and this implies $|Q|=q$. In particular, $Q$ is abelian and self-normalizing, thus $G$ has a normal $q$-complement and we get a contradiction. Similarly we see that $|Q|\leq q^2$, hence both $P$ and $Q$ are abelian. If $|D|\geq 2^3$ then, together with a subgroup of order $2$ and $S$, we also have a subgroup of order $4$ that is not self-normalizing in $G$; as a consequence, one among $P$ and $Q$ must be self-normalizing. Assuming this is $P$ (without loss of generality), then $G$ has a normal $p$-complement, and now also $Q$ must be self-normalizing. This yields the final contradiction that $G$ has also a normal $q$-complement, and our conclusion so far is that $G$ is a simple group.  

We claim that the order of $G$ is divisible by precisely three primes. In fact, assume that $|G|$ is divisible by three distinct odd primes $p$, $q$, $r$, and let $P\in\syl p G$, $Q\in\syl q G$, $R\in\syl r G$. Assume that one among $P$, $Q$ and $R$ is nonabelian: without loss of generality, let this be $P$. Since in this situation we have $|P|\geq p^3$, $G$ has subgroups of order $2$, $p$ and $p^2$ that are not self-normalizing. Therefore we have $|Q|\leq q^2$ and $|R|\leq r^2$, so that both $Q$ and $R$ are abelian. Moreover, at least one among $Q$ and $R$ must be self-normalizing; if this is $Q$, then $G$ has a normal $q$-complement. But now also $R$ must be self-normalizing and $G$ has a normal $r$-complement as well, a contradiction. Our conclusion so far is that $P$, $Q$ and $R$ are all abelian, and we claim that in fact also a Sylow $2$-subgroup $D$ of $G$ is abelian. In fact, assuming the contrary, $G$ has subgroups of order $2$ and $4$ that are not self-normalizing, therefore at least one among $P$, $Q$, $R$ must be self-normalizing; if this is $P$ (without loss of generality), then $G$ has a normal $p$-complement and now one among $Q$ and $R$ must be self-normalizing. Assuming this is $Q$, $G$ has a normal $q$-complement. Finally, $R$ is forced to be self-normalizing yielding the existence of a normal $r$-complement and thus a contradiction. Given that now $D$, $P$, $Q$ and $R$ are all abelian and $G$ has a subgroup of order $2$ that is not self-normalizing, an entirely similar argument yields again a contradiction.

Our claim that $|G|$ is divisible by three primes is then proved. Now, the set of simple groups having this property is finite, and its elements are $${\rm A}_5,\; {\rm A}_6, \;{\rm L}_2(7),\; {\rm L}_2(8),\; {\rm L}_2(17),\; {\rm L}_3(3),\; {\rm U}_3(3),\; {\rm U}_4(2)$$
(see \cite{He}). The last four groups cannot lie in $\mathcal{D}_4$ because of the orders of their Sylow subgroups, whereas a direct check rules out all the others except ${\rm A}_5$: in fact we have ${\rm A}_6\in{\mathcal D}_{11}$, ${\rm L}_2(7)\in{\mathcal D}_{8}$ and ${\rm L}_2(8)\in{\mathcal D}_{6}$. The proof is complete.
\end{proof}

It can be checked (by direct computation or using GAP or Magma) that ${\rm SL}_2 (3)$ lies in  ${\mathcal D}_4$. In fact, as we will show next, ${\rm SL}_2 (3)$ is the unique solvable group in ${\mathcal D}_4$ whose derived length is larger than $2$.

\begin{theorem} \label{D=4,dl} 
Suppose $G$ lies in ${\mathcal D}_4$ and $G \neq {\rm A}_5$. Then $G$ is solvable and either has derived length at most $2$ or is isomorphic to ${\rm SL}_2 (3)$. 
\end{theorem}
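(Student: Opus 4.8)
The plan is to combine Theorem~\ref{NonsolvableD4} with a bound on the derived length and a case split on the center. Since $G\in\mathcal{D}_4$ and $G\neq{\rm A}_5$, Theorem~\ref{NonsolvableD4} gives that $G$ is solvable, and Proposition~\ref{derivedlength} then shows $\dl G\le 3$; hence it suffices to prove that a solvable $G\in\mathcal{D}_4$ of derived length exactly $3$ is isomorphic to ${\rm SL}_2(3)$. So assume $\dl G=3$, set $N=G''$ (a nontrivial abelian normal subgroup of $G$) and $\b G=G/N$. By Lemma~\ref{rem}, $\b G\in\mathcal{D}_m$ for some $m\le 3$, and $\b G$ is metabelian and nonabelian; hence Propositions~\ref{D_1}, \ref{D_2} and \ref{D_3} identify $\b G$ as one of an explicit short list of groups, in each of which $\b G/\b G'$ is cyclic of prime-power order, say $q^{b}$ with $b\le 2$. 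Consequently $G/G'\cong C_{q^{b}}$, while $G'/N\cong\b G'$ is abelian, $(G')'=N$, and $G'$ is nonabelian.

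Suppose first that $\zent G\neq 1$. Since $G\in\mathcal{D}_4$ and not $\mathcal{D}_3$, Corollary~\ref{D(G)-D(Z)=3} forces $\zent G\cong C_p$ for a prime $p$. As $\zent G$ is central, $G/\zent G$ has derived length at least $\dl G-1=2$, and it lies in some $\mathcal{D}_{m'}$ with $m'\le 3$ by Lemma~\ref{rem}; so Proposition~\ref{metabelian} shows $G/\zent G$ has derived length exactly $2$, whence $G''\le\zent G$, that is $G''=\zent G\cong C_p$. Then $G'/G''$ is abelian with $G''$ central in $G$, so $G'$ is nilpotent of class at most $2$; writing $G'=P_1\times A$ with $P_1\in\syl p{G'}$ nonabelian and $A$ an abelian $p'$-group, the subgroups $\zent G$, $A$, $P_1$, $G'$ and a maximal subgroup of $P_1$ would be five pairwise nonconjugate, non self-normalizing subgroups of $G$ whenever $A\neq 1$, so $A=1$. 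Thus $G'=P_1$ is a nonabelian $p$-group with $P_1'=\zent G$ and $G/P_1\cong C_{q^{b}}$; if $q=p$ then $G$ is a $p$-group and Theorem~\ref{nilpotent} forces $\dl G\le 2$, a contradiction, so $q\neq p$ and, by Schur--Zassenhaus, $G=P_1\rtimes C_{q^{b}}$ with the complement acting faithfully (a nontrivial kernel of the action would lie in $\zent G$, a $p$-group, whereas $q\neq p$). Now $\b G'\cong P_1/\zent G$ is a $p$-group of order $|P_1|/p$, so comparison with the list, together with $|P_1|\ge p^3$, gives $|P_1|\in\{p^3,p^4\}$; but if $|P_1|=p^4$ then $P_1$ is noncyclic (being nonabelian) and not generalized quaternion ($P_1'=\zent G$ has order $p$, too small), hence has at least two subgroups of order $p$ as well as subgroups of orders $p^2$ and $p^3$, giving at least five conjugacy classes of non self-normalizing subgroups inside $P_1$ --- impossible. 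Hence $|P_1|=p^3$; a class count inside $P_1$ now excludes odd $p$ (an extraspecial $p$-group of odd order has at least $p+1$ subgroups of order $p$ and $p+1$ maximal subgroups, which the coprime cyclic group $C_{q^{b}}$ of automorphisms is too small to fuse into few enough orbits to keep $G$ in $\mathcal{D}_4$) and excludes $D_8$ (as $|\aut{D_8}|=8$ admits no nontrivial faithful action of $C_{q^{b}}$ with $q\neq 2$), so $P_1\cong Q_8$. Finally $C_{q^{b}}$ embeds in $\aut{Q_8}\cong{\rm S}_4$ with $q$ odd, which forces $q^{b}=3$; since the action is nontrivial (otherwise $G'=Q_8'=C_2\neq Q_8$) and $\aut{Q_8}$ has a single conjugacy class of subgroups of order $3$, we conclude $G\cong Q_8\rtimes C_3\cong{\rm SL}_2(3)$.

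It remains to rule out $\zent G=1$, and here too I would force a fifth conjugacy class. Let $F=\fit G$; as $\zent G=1$ the group $G$ is not nilpotent, so $F$ is a proper normal nilpotent subgroup, every proper nontrivial subgroup of $F$ is subnormal and hence non self-normalizing in $G$, and therefore ${\mathcal D}_G(F)\le 3$ ($F$ itself accounting for one further class). Using the standard fact $C_G(F)\le F$ for solvable $G$, a class count first shows $F$ must be a $p$-group: if $|F|$ were divisible by two primes $p$ and $p'$, then $\oh p G$, $\oh {p'}G$, a proper nontrivial subgroup $U$ of $\oh p G$, the subgroup $U\,\oh{p'}G$, and $F$ itself would be five distinct conjugacy classes, unless both Sylow subgroups of $F$ have prime order, in which case $F$ is cyclic and $C_G(F)=F$, so $G/F$ is abelian and $G'\le F$ is abelian, contradicting $\dl G=3$. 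With $F$ a $p$-group containing $N=G''$, I would then analyse the faithful action of $G/C_G(F)$ on $F$ (on $F$ if $F$ is abelian, on $F/\Phi(F)$ otherwise), together with the restrictions the list above places on $\b G=G/N$; a configuration-by-configuration count produces in each case a fifth conjugacy class of non self-normalizing subgroups, the desired contradiction.

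The main obstacle is this last case $\zent G=1$: when a central subgroup is available the counting collapses quickly and transparently to ${\rm SL}_2(3)$, whereas with trivial center the nonabelian normal subgroup $G'$ tends to carry many conjugacy classes of subgroups and one must eliminate every extension realizing the small quotient $\b G$. The other delicate ingredient, already needed above, is making the recurring ``too many conjugacy classes'' estimates rigorous --- in particular, bounding from below the number of $G$-conjugacy classes of subnormal subgroups inside a $p$-group that is neither cyclic nor generalized quaternion --- for which the counting in the proof of Theorem~\ref{nilpotent} provides the natural tool.
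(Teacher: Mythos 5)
Your reduction to the case $\dl G=3$ via Theorem~\ref{NonsolvableD4} and Proposition~\ref{derivedlength}, and your identification of $G/G''$ from the classifications of $\mathcal{D}_1$, $\mathcal{D}_2$, $\mathcal{D}_3$, are correct, and the half of the argument where $\zent G\neq 1$ is essentially complete: Corollary~\ref{D(G)-D(Z)=3} does force $\zent G\cong C_p$, the deduction $G''=\zent G$ and $G'=P_1$ a nonabelian $p$-group of order $p^3$ or $p^4$ is sound, and the endgame landing on $Q_8\rtimes C_3\cong{\rm SL}_2(3)$ works. This is a genuinely different organization from the paper, which never splits on the center but instead takes $N$ maximal with $G/N$ nonabelian and analyzes the resulting Frobenius quotient. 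One step you should tighten: the exclusion of odd $p$ when $|P_1|=p^3$ rests on the assertion that $C_{q^b}$ ``is too small to fuse'' the $p+1$ maximal subgroups and the noncentral subgroups of order $p$ into single orbits, and for the exponent-$p$ extraspecial group this requires an actual computation in ${\rm GL}_2(p)$. A cleaner route is to note that $\zent G$ normalizes a Sylow $q$-subgroup $Q_0$ of $G$, so $Q_0$ is itself not self-normalizing; since $P_1$ already contributes $\zent G$, $P_1$, at least one class of maximal subgroups and (because $\Omega_1(P_1)\supseteq C_p\times C_p$ for odd $p$) at least one class of noncentral subgroups of order $p$, the class of $Q_0$ is a fifth one.

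The genuine gap is the case $\zent G=1$. There you prove only that $\fit G$ is a $p$-group and then state that ``a configuration-by-configuration count produces in each case a fifth conjugacy class'' --- but this count is never carried out, and it is not a routine afterthought: it is where the bulk of the difficulty of the theorem lives. Concretely, with $\zent G=1$ the quotient $G/G''$ is one of the Frobenius groups on your list, and for each one you must control all abelian normal subgroups $G''$ and all intermediate normal subgroups between $G''$ and $G'$; this is exactly the territory the paper spends most of its proof on (the configurations $|G|=pqr^2$, $|G|=p^3q$, and $G'$ a $q$-group with $|G'/N|\in\{q,q^2,q^3\}$, each of which genuinely threatens to produce a group of derived length $3$ and has to be killed by a bespoke counting or structural argument, e.g.\ via the Frattini argument on Sylow subgroups of $G'$, or via the fact that a Frobenius complement of order $pq$ must be cyclic). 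Until that analysis is written out, the theorem is not proved: you have shown that any counterexample has trivial center, not that none exists.
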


\begin{proof} 
By Proposition \ref{NonsolvableD4}, it follows that $G$ is solvable.   We suppose $G$ is not abelian and thus $G'$ is nontrivial.  Then $G/G'$ lies in ${\mathcal D}_m$ for some $m\leq 3$ by Lemma~\ref{rem}.

If $G/G'$ is not cyclic, an application of Propositions~\ref{D_0}, \ref{D_1}, \ref{D_2} and \ref{D_3} yields 
$G/G' \cong C_2 \times C_2$.  Now, $G'$ and the maximal subgroups of index $2$ account for all the conjugacy classes of nontrivial, non self-normalizing subgroups of $G$.  Since $G$ is solvable, this implies that $G'$ must have a prime order, and so $G$ will have derived length $2$.  

Therefore, we suppose $G/G'$ is cyclic. Let $N \unlhd G$ be maximal such that $G/N$ is not abelian. By the comment following Lemma~12.3 in \cite{Isaacs}, we know $G/N$ is a $p$-group for some prime $p$ or a Frobenius group. However,  if $G/N$ is a nonabelian $p$-group, then $G/G'$ cannot be cyclic which is a contradiction.  Thus, $G/N$ is a Frobenius group with abelian Frobenius kernel $G'N/N$, and we can assume $N\neq 1$.  Note $G'$ and $N$ are distinct normal subgroups of $G$ and thus, we only have two more conjugacy classes of nontrivial non self-normalizing subgroups in $G$. 

Next, suppose $N$ is not contained in $G'$; so, $G'N$ is an additional normal subgroup of $G$. Also, note that $G'' \leq G' \cap N$.  Hence, if $G' \cap N = 1$, then the result follows, so we may assume that $1 \neq G'' = G' \cap N$ and that we have four distinct proper nontrivial normal subgroups, namely, $G', G'', N$ and $NG'$.

As $G$ is solvable, this will force various quotients to be cyclic of prime order.  In particular, $|G/G'N| = p$, $|G'N/N| = q$, $|N/G''| = r$ and $|G''| = s$ where $p,q,r$ and $s$ are primes. So $|G'| = qs$. If $q \neq s$ then the Sylow $q$-subgroups of $G'$ give another conjugacy class of non self-normalizing subgroups by the Frattini argument, which is a contradiction. Thus $q=s$ and $|G'| = q^2$. But then $G'$ is abelian and the result follows.

So we assume $N \leq G'$. Let $H/N$ be the Frobenius complement in $G/N$. Suppose $H/N$ is not cyclic of prime order. Then $H/N$ has a proper subgroup of prime order, call this subgroup $P/N$. Then $P$, $G'P$, $G'$ and $N$ represent four conjugacy classes of nontrivial, proper, non self-normalizing subgroups. Note that we can assume $G''=N$ (otherwise $G''=1$ and we are done), and $N$ is abelian. As $G$ lies in ${\mathcal D}_4$, it follows that $N$ and $G'/N$ must be cyclic of prime order. But then the automorphism group of $N$ will be abelian which forces $G'$ to centralize $N$, and thus, $G'$ is central-by-cyclic.  Therefore, $G'$ is abelian, as required. 
  
We may assume $H/N \cong G/G'$ is cyclic of prime order $p$, say.  By Lemma~\ref{rem}, we have that $G/N$ lies in ${\mathcal D}_m$ for some $m \leq 3$. By the choice of $N$ it follows that $G'/N$ is a minimal normal subgroup of the solvable group $G/N$, and so, $G'/N$ is elementary abelian of $q$-power order for some prime $q$.  We will consider next the case when $G'$ is not a $q$-group.

Let $Q$ be a Sylow $q$-subgroup of $G'$ (and hence of $G$). By the Frattini argument, $Q$ is not self-normalizing. This argument holds for all prime divisors of $G'$.  Thus, if $|G'|$ is divisible by three distinct primes,  then consideration of the Sylow subgroups of $G'$ along with the subgroups $G'$ and $N$ gives a contradiction. Thus, $|G'|$ is divisible by two primes, $q$ and $r$ say (possibly, $r=p$).  

Now, suppose $|G'/N|>q$ and consider a subgroup $L$ of $G'$ such that $L\supseteq N$ and $|L/N|=q$. Also, let $Q_0$ be a Sylow $q$-subgroup of $L$. Since $L\trianglelefteq G'$, the Frattini argument yields that $Q_0$ is not a self-normalizing subgroup of $G$, and the subgroups $G'$, $L$, $N$, $Q$, $Q_0$ represent too many conjugacy classes of nontrivial, proper, non self-normalizing subgroups of $G$. Therefore we have $|G'/N|=q$. If $q$ divides $|N|$, then $G'$, $N$, $Q$, $R\in\syl r{G'}$ and $Q\cap N$ produce again the same contradiction. As a consequence, $N$ must be an $r$-group (in fact, the Sylow $r$-subgroup of $G'$) and $|Q|=q$. 

Observe that $|N|$ is clearly at most $r^2$, and assume for the moment $|N| =r$.  Then either $r \neq p$ and all Sylow subgroups of $G$ are cyclic, which implies (via Theorem~\ref{z-group}) that $G$ has derived length at most $2$, or $r=p$.  But in the latter case, as $p$ divides $q-1$, it follows that $N$ is central in $G'$ and thus $G'$ is abelian. In any case we get the desired conclusion.

Hence, we may assume $|N| = r^2$ (i.e., $|G| = pq r^2$), and the four conjugacy classes of nontrivial, proper, non self-normalizing subgroups are represented by $G'$, $N$, $Q$ and $M$ where $M$ is a subgroup of $N$ of order $r$. Now, denoting by $T$ a Hall $\{p,q\}$-subgroup of $G$, we have $G=NT$ and so $T\cong G/N$ is a Frobenius group (note that $p$ has to divide $q-1$, hence $p<q$). We claim that, assuming $G'$ nonabelian (otherwise we are done), also $G$ is a Frobenius group with kernel $N$ and complement $T$. In fact, since a Sylow $p$-subgroup of $G$ is necessarily self-normalizing, we cannot have elements of order $pr$ in $G$. Moreover, if $G$ has an element $x$ of order $qr$, then $|\norm G Q|=pqr$ (recall that $Q$ is not normal in $G$, since otherwise $G'=N\times Q$ would be abelian) and $|\syl q G|=r$; in particular, $q$ divides $r-1$ and $\langle x\rangle$ is normal in $\norm G Q$, which yields a fifth conjugacy class of nontrivial, proper, non self-normalizing subgroups of $G$. This contradiction shows that there are no elements of order $qr$ in $G$ as well, hence $G$ is a Frobenius group as claimed. However, by Theorem~16.12 b) in \cite{Hu2}, the Frobenius complement $T$ of order $pq$ should be cyclic, contradicting the fact that $T$ itself is a Frobenius group; we deduce that our assumptions lead to a contradiction, and we are left with the case $|G|=p^3q$.


Now we have $|G/G'| =p$, $|G'/N| = q$, and $|N| = p^2$; in this situation, the subgroups $N$, $G'$, $Q$ together with a subgroup of order $p$ in $N$ are representatives of the four conjugacy classes of nontrivial, proper, non self-normalizing subgroups of $G$. 
If $N$ is cyclic, then $G' \le \cent G N$ and $G' = N \times Q$ is abelian, so $G$ has derived length $2$.  Thus we may assume $N \cong C_p \times C_p$.  Notice that all of the subgroups of $N$ having order $p$ must lie in one $G$-conjugacy class, and that a representative of this conjugacy class can be chosen in the center of a Sylow $p$-subgroup $P$ of $G$; so, the size of the conjugacy class of subgroups of $N$ having order $p$ is $q$.  Since $N$ has $p+1$ subgroups of order $p$, we conclude that $q = p+1$, hence $p=2$ and $q=3$.  This implies that $P$ is isomorphic to the dihedral group of order $8$.  Note that $P$ has elements of order $2$ and of order $4$ that lie outside of $N$, and this gives two additional conjugacy classes of nontrivial, proper subgroups of $G$ that are not self-normalizing.  This contradiction completes the analysis for the case when $G'$ is not a $q$-group.

It remains to consider the situation in which $|G/G'| = p$ and $G'$ is a $q$-group (of order at most $q^4$). As above, $G'/N$ is an elementary abelian $q$-group of order $q^m$ for a suitable integer $m$.  Note that we can assume $N\neq 1$, as otherwise $G'$ is abelian and we are done; as a consequence, we can assume $m\leq 3$. Let $P$ be a Sylow $p$-subgroup of $G$, suppose $m\neq 1$, and suppose that $P$ has at least two orbits on the set of nontrivial proper subgroups of $G'/N$.  The two corresponding $G$-conjugacy classes of subgroups of $G'$, along with $G'$ and $N$, yield the four conjugacy classes of nontrivial, proper, non self-normalizing subgroups of $G$.  It follows that $N$ has order $q$.  Note that, as $P$ acts irreducibly on $G'/N$, $p$ does not divide $q^t-1$ for every integer $t$ that satisfies $0 < t < m$; in particular, $P$ centralizes $N$.  Now, setting $H = P\times N$ we see that $P$ is normal in $H$, hence $P$ is another non self-normalizing subgroup of $G$ and it produces a contradiction.

The above discussion excludes the case $m=3$, so $G'/N \cong C_q \times C_q$; moreover, $P$ acts transitively on the subgroups of order $q$ in $G'/N$, hence we have $p = q+1$ (i.e., $p = 3$ and $q = 2$). Since it can be checked via GAP that there do not exist groups of order $48$ with a nonabelian Sylow $2$-subgroup coinciding with the derived subgroup,  we have $|N|=2$. Moreover, $G'$ is a nonabelian subgroup of order $8$ that has an automorphism of order $3$. It is well known that $G'$ must be isomorphic to the quaternion group of order $8$, and hence $G$ is isomorphic to ${\rm SL}_2 (3)$.

The last possibility we have to consider is when $|G'/N| = q$. As usual, we can assume that $G'$ is nonabelian; thus, in this case, $N$ is noncyclic because otherwise we would get $G'\subseteq\cent G N$ and $G'$ would be central-by-cyclic (i.e., abelian). Now, observe that $\zent{G'}\cap N$ is nontrivial (because $G'$ is a $q$-group) and properly contained in $N$ (as otherwise $G'$ would be again central-by-cyclic), therefore $\zent{G'}\cap N$, $N$ and $G'$ are three distinct nontrivial, proper, normal subgroups of $G$. Also, take $x\in N-\zent{G'}$, $y\in G'-N$, and consider the subgroups $X=\langle x\rangle$, $Y=\langle y\rangle$. Since $X$ and $Y$ are proper subgroups of the $q$-group $G'$, certainly they are not self-normalizing. Moreover, it is clear that they yield two more conjugacy classes of nontrivial, proper, non self-normalizing subgroups of $G$. We reached the final contradiction, and the proof is complete.
\end{proof}

More generally we can bound the derived length of a solvable group in ${\mathcal D}_n$ by appealing to \cite[Theorem~8]{G}. 

\begin{theorem} \label{glasby}
Suppose $G$ is a solvable group that lies in ${\mathcal D}_n$. Then the derived length of $G$ is bounded by $3 \log_2(n +1) + 9$.
\end{theorem}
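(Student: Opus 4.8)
The plan is to reduce the problem to a bound on the number of ``relevant'' primes and on the size of a chief series, and then to invoke the Glasby-type bound from \cite{G}. First I would set $d = \dl G$ and observe, via the derived series $G \rhd G' \rhd G'' \rhd \cdots \rhd G^{(d)} = 1$, that each nontrivial term $G^{(i)}$ for $1 \le i \le d-1$ is a proper normal subgroup of $G$, hence non self-normalizing, and these are pairwise non-conjugate; this already gives the elementary bound $d - 1 \le n$, i.e.\ $d \le n-1$ (which is Proposition~\ref{derivedlength}). To do better, the idea is to extract more conjugacy classes of non self-normalizing subgroups than just the derived series, using the structure of the solvable group: if some Sylow subgroup of $G$ (or of a normal subgroup) is noncyclic, or if $|G|$ has many prime divisors, we get extra non-conjugate non self-normalizing subgroups (proper subgroups of Sylow subgroups, together with Sylow subgroups of normal subgroups made non self-normalizing by the Frattini argument, exactly as in the arguments of Theorem~\ref{D=4,dl}).

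The key steps, in order, would be: (1) bound the number $k$ of distinct prime divisors of $|G|$ in terms of $n$ — roughly $2^{k-1} \le n + O(1)$ if all Sylow subgroups are cyclic, using that a $Z$-group with $k$ primes already produces on the order of $2^{k}$ classes of non self-normalizing subgroups inside $G' \times (G/G')$ by Theorem~\ref{z-group} and a product-count in the spirit of Lemma~\ref{product}; (2) bound the exponent of each prime, i.e.\ the order of each Sylow subgroup, again in terms of $n$, since a Sylow subgroup of order $p^a$ forces $a$ non-conjugate non self-normalizing subgroups $P > P_1 > \cdots$ inside it; (3) combine (1) and (2) to bound $\log_2|G|$ by a logarithmic-in-$n$ quantity; (4) apply \cite[Theorem~8]{G}, which bounds the derived length of a solvable group in terms of the length of a chief series (equivalently in terms of $\log_2 |G|$, or in terms of the number of prime factors counted with multiplicity), to obtain $\dl G \le 3\log_2(n+1) + 9$. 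The constants $3$ and $9$ should come out of plugging the estimates from (1)–(3) into the precise statement in \cite{G}; I would keep the bookkeeping loose and only tighten the arithmetic at the end to land exactly on $3\log_2(n+1)+9$.

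The main obstacle I anticipate is step (1)–(2): turning ``$G$ has a large Sylow subgroup'' or ``$|G|$ has many prime divisors'' into a clean lower bound on $n = {\mathcal D}(G)$ uniformly, without assuming nilpotency. For a general solvable $G$ the subgroups I want to count (proper subgroups of Sylow subgroups, Sylow subgroups of the various $G^{(i)}$, their intersections) need not be pairwise non-conjugate a priori, and some of the Sylow subgroups of $G$ itself might actually be self-normalizing — so I cannot simply multiply counts as in the nilpotent case. The fix is to work prime-by-prime along the derived series: for each prime $p$ dividing $|G^{(i-1)}/G^{(i)}|$, a Sylow $p$-subgroup of $G^{(i-1)}$ is non self-normalizing by the Frattini argument (as $G^{(i-1)} \trianglelefteq G$), and these, together with the $G^{(j)}$ themselves, are non-conjugate for order reasons; counting these carefully should yield a bound like ``(number of prime factors of $|G|$ with multiplicity, minus something) $\le n$'', which feeds directly into the chief-length hypothesis of \cite{G}. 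Reconciling the exact form of that hypothesis with the count is where the real care is needed, but no new ideas beyond those already used in Theorems~\ref{NonsolvableD4} and \ref{D=4,dl} should be required.
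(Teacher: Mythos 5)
There is a genuine gap: your proposal never actually establishes the one inequality that \cite[Theorem~8]{G} needs, namely that the \emph{composition length} of $G$ is at most $n+1$, and the route you sketch cannot produce it. The missing idea is the elementary observation that \emph{every} proper nontrivial subnormal subgroup of $G$ fails to be self-normalizing (if $H=H_0\lhd H_1\lhd\cdots\lhd H_k=G$ and $H\neq G$, the first term properly containing $H$ normalizes it). Hence the nontrivial proper terms $G_1<G_2<\cdots<G_{c-1}$ of a composition series of the solvable group $G$ are $c-1$ non self-normalizing subgroups of pairwise distinct orders, so $c-1\le n$, and Glasby's theorem --- which is stated in terms of composition length --- gives $3\log_2(n+1)+9$ at once. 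You apply exactly this reasoning to the derived series (getting $d-1\le n$) but stop short of refining it to a composition series, which is the whole proof.

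Your steps (1)--(3) are both insufficient and, as stated, partly false. Step (3) claims $\log_2|G|$ is logarithmic in $n$; this fails already for $C_{p^2}\rtimes C_q\in\mathcal{D}_2$ with $p$ arbitrarily large. The quantity that matters is the number of prime factors of $|G|$ counted with multiplicity (which for solvable $G$ equals the composition length), and your estimates --- roughly $k\lesssim\log_2 n$ distinct primes each with exponent $a\lesssim n$ --- only bound it by something of order $n\log_2 n$, or $2n$ under the most optimistic prime-by-prime Frattini count; plugging such a bound into \cite[Theorem~8]{G} does not recover the constant $3\log_2(n+1)+9$ in the statement. So the ``tighten the arithmetic at the end'' step cannot succeed along your route, whereas the subnormal-chain argument makes the entire Sylow/prime analysis unnecessary.
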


\begin{proof} 
As any proper subnormal subgroup is not self-normalizing it follows that if $G$ is solvable and lies in ${\mathcal D}_n$ then its composition length is bounded by $n+1$. The result now follows from \cite[Theorem 8]{G}.
\end{proof}

\section{Future work and open problems}

We would like to close this paper by listing some open problems and suggestions for future work.  In this paper, we have done a thorough study of $\mathcal{D}_0$, $\mathcal{D}_1$, $\mathcal{D}_2$, $\mathcal{D}_3$, and $\mathcal{D}_4$, and then, made several general observations coming from studying these cases.  In particular, we saw that for nilpotent groups in $\mathcal{D}_n$ the nilpotency class is at most $n/2$ and the derived length is at most $\log_2 (n/2) + 1$.  We also did  computations for Frobenius groups that led to the classifications that appear in Section \ref{frob sect}.  The general computation for the derived length of solvable groups is the following:

\begin{theorem}
Let $n \ge 3$ be an integer.  If $G \in \mathcal{D}_n$ is solvable, then the derived length of $G$ is less than or equal to the minimum of $n-1$ and $3 \log_2 (n+1)+9$.
\end{theorem}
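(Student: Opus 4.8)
The plan is simply to combine the two bounds already established in this section. Proposition~\ref{derivedlength} shows that a solvable $G\in\mathcal{D}_n$ with $n\geq 3$ has derived length at most $n-1$, and Theorem~\ref{glasby} shows that the derived length of any solvable group in $\mathcal{D}_n$ is at most $3\log_2(n+1)+9$. Since both upper bounds hold simultaneously, the derived length is bounded by their minimum, which is exactly the assertion.

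Concretely, I would first invoke Proposition~\ref{derivedlength} to get $\dl G\leq n-1$, using $n\geq 3$. Then I would recall the mechanism behind Theorem~\ref{glasby}: every proper subnormal subgroup of $G$ fails to be self-normalizing, so a composition series of $G$ produces at most $n$ proper nontrivial subnormal subgroups (one for each proper nontrivial term), forcing the composition length of $G$ to be at most $n+1$; Glasby's bound \cite[Theorem~8]{G} then translates this into $\dl G\leq 3\log_2(n+1)+9$. Combining, $\dl G\leq\min\{n-1,\,3\log_2(n+1)+9\}$.

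There is genuinely no obstacle here: the two inequalities are independent and already proved, so the "proof" is a one-line citation of Proposition~\ref{derivedlength} and Theorem~\ref{glasby} followed by taking the minimum. If anything, the only thing worth a remark is which of the two bounds is sharper in which range of $n$ — the linear bound $n-1$ wins for small $n$ while the logarithmic bound $3\log_2(n+1)+9$ eventually dominates — but this comparison is not needed for the statement and I would not belabor it. I would also echo the paper's standing expectation that the true bound should be logarithmic in $n$ with much smaller constants, so neither estimate here is believed to be optimal.
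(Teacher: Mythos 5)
Your proposal is correct and is exactly the paper's proof: the theorem is proved by citing Proposition~\ref{derivedlength} for the bound $n-1$ and Theorem~\ref{glasby} for the bound $3\log_2(n+1)+9$, then taking the minimum. Your recap of the mechanism behind Theorem~\ref{glasby} (a strictly decreasing composition series yields pairwise non-conjugate proper subnormal, hence non self-normalizing, subgroups, bounding the composition length by $n+1$ before applying Glasby) also matches the paper's argument.
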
  

\begin{proof}
This is Proposition \ref{derivedlength} and Theorem \ref{glasby}.
\end{proof}

We have the solvability of all the groups in $\mathcal{D}_0$, $\mathcal{D}_1$, $\mathcal{D}_2$, $\mathcal{D}_3$, and $\mathcal{D}_4$ with the lone exception of ${\rm A}_5$.  Here are our suggested next steps:

\begin{enumerate}
\item Understand $\mathcal{D}_5$.  In particular, we think it is likely that all of the groups in this set are solvable, and it would be good to determine what derived lengths occur for solvable groups in this set.  From computations in GAP/Magma, we believe that they all have derived length at most $3$.  If this is correct, then we can argue that ${\rm dl} (G) \le n-2$ whenever $G$ is solvable and $G \in \mathcal{D}_n$ with $n \ge 5$.
\item What values of $n$ occur when $\mathcal{D}_n$ contains $G$ where $G$ is nonsolvable?  In particular, what is the next value of $n$ so that $\mathcal{D}_n$ contains a nonsolvable group?
\item For what values of $n$ (if any) does $\mathcal{D}_n$ contain nonisomorphic nonsolvable groups?
\item For what values of $n$ does $\mathcal{D}_n$ contain nonabelian simple groups?  
\item From the computations we have done in GAP/Magma, the formula $3 \log_2 (n+1)+9$ seems much bigger than the derived lengths that actually occur in $\mathcal{D}_n$, how much can we improve on this formula?
\end{enumerate}

The third author presented the results of this paper to the Urals Seminar on Group Theory and Combinatorics.  In response to this presentation, Gareth Jones has written a preprint \cite{JCount} which among other things counts the number of conjugacy classes of non self-normalizing subgroups of ${\rm PSL}_2(p)$ for primes $p$ that satisfy certain properties.  Based on his work, Professor Jones gets partial answers to our questions (2), (3), and (4); in particular, he shows in \cite[Theorem~1.4]{JCount} that there exists an infinite family of primes $p$ for which $\mathcal{D}({\rm PSL} (2,p))\leq 384$.

\end{document}